\newfont{\bb}{msbm10 at 11pt}
\newfont{\bbsmall}{msbm8 at 8pt}
\def\rth{\mathbb{R}^3}
\def\R{\mathbb{R}}
\def\B{\mathbb{B}}
\def\N{\mathbb{N}}
\def\Z{\mathbb{Z}}
\def\C{\mathbb{C}}
\def\esf{\mathbb{S}}
\newcommand{\ben}{\begin{enumerate}}
\newcommand{\bit}{\begin{itemize}}
\newcommand{\een}{\end{enumerate}}
\newcommand{\eit}{\end{itemize}}
\newcommand{\wh}{\widehat}
\newcommand{\Int}{\mbox{Int}}
\newcommand{\wt}{\widetilde}
\newcommand{\h}{\widehat}
\newcommand{\ed}{\end{document}}
\def\a{{\alpha}}
\def\t{{\theta}}
\def\g{{\gamma}}
\def\G{{\Gamma}}
\def\l{{\lambda}}
\def\L{{\Lambda}}
\def\de{{\delta}}
\def\b{{\beta}}
\def\ve{{\varepsilon}}
\def\centerbmp#1#2#3{\vskip#2\relax\centerline{\hbox to#1{\special
    {bmp:#3 x=#1, y=#2}\hfil}}}
\newtheorem{theorem}{Theorem}[section]
\newtheorem{lemma}[theorem]{Lemma}
\newtheorem{proposition}[theorem]{Proposition}
\newtheorem{remark}[theorem]{Remark}
\newtheorem{corollary}[theorem]{Corollary}
\newtheorem{definition}[theorem]{Definition}
\newtheorem{assertion}[theorem]{Assertion}
\newenvironment{proof}{\smallskip\noindent{\it Proof.}\hskip \labelsep}
{\hfill\penalty10000\raisebox{-.09em}{$\Box$}\par\medskip}
\begin{document}
\begin{title}
{Embedded minimal surfaces of finite topology}
\end{title}
\vskip .2in

\begin{author}
{William H. Meeks III\thanks{This material is based upon
   work for the NSF under Award No. DMS -
  1309236. Any opinions, findings, and conclusions or recommendations
   expressed in this publication are those of the authors and do not
   necessarily reflect the views of the NSF.}
   \and Joaqu\'\i n P\' erez
\thanks{Research partially supported by the MINECO/FEDER grant no. MTM2011-22547.}, }
\end{author}
\maketitle
\begin{abstract}
In this paper we prove that a complete, embedded minimal surface
$M$ in $\rth$ with finite topology and compact boundary (possibly
empty) is conformally a compact Riemann surface $\overline{M}$ with
boundary punctured in a finite number of interior points and that
$M$ can be represented in terms of meromorphic data on its conformal
completion $\overline{M}$. In particular, we demonstrate that $M$ is
a minimal surface of finite type and describe how this property
permits a classification of the asymptotic behavior of $M$.

\vspace{.3cm}

\noindent{\it Mathematics Subject Classification:} Primary 53A10,
   Secondary 49Q05, 53C42

\noindent{\it Key words and phrases:}
Minimal surface, helicoid with handles,
infinite total curvature, flux vector,
minimal  surface of finite type, asymptotic behavior.
\end{abstract}

\section{Introduction.} \label{sec1}

Based on work of Colding and Minicozzi~\cite{cm23},
Meeks and Rosenberg~\cite{mr8} proved that a properly embedded,
simply connected minimal surface in $\rth$ is a plane or a helicoid.
In the last page of their paper, they described how their proof
of the uniqueness of the helicoid could be modified  to prove:
\begin{quote}
{\em  Any nonplanar, properly embedded minimal surface $M$ in $\rth$
with one end, finite topology and infinite total curvature satisfies
the following properties:
\begin{enumerate}[1.]
\item $M$ is conformally a compact Riemann surface
$\overline{M}$ punctured at a single point.
\item $M$ is asymptotic to a helicoid.
\item $M$ can be expressed analytically in terms of
meromorphic data on $\overline{M}$.
\end{enumerate}
}
\end{quote}

A rigorous proof of the above statement has been given
by Bernstein and Breiner~\cite{bb2}.

In our survey~\cite{mpe2} and book~\cite{mpe10}, we outlined the proof by Meeks and Rosenberg
of the uniqueness of the helicoid and at the end of this outline we
mentioned how some difficult parts of this proof could be simplified
using some results of Colding and Minicozzi in~\cite{cm34}, and
referred the reader to the present paper for details. Actually, we will
 consider the more general problem of
describing the asymptotic behavior, conformal structure and analytic
representation of an annular end of any {\it complete,} injectively
immersed\footnote{That is, $M$ has no self-intersections and its intrinsic
 topology may or may not agree with the subspace topology as a subset of
 $\R^3$.}
minimal surface $M$ in $\rth$ with compact boundary
and finite topology.
Although not explicitly stated in the paper~\cite{cm35} by
Colding and Minicozzi, the results contained there imply
that such an $M$ is {\it properly embedded}\/\footnote{By this we mean that
the intrinsic topology on $M$ coincides with the subspace topology and
the intersection of $M$ with every closed ball in $\R^3$ is compact
in $M$.} in $\rth$; we will use this
properness property of $M$ in the proof of Theorem~\ref{th1.1} below.
We also remark that Meeks, P\'erez and Ros~\cite{mpr9} have proved the following
more general properness result: If $M$ is a complete, injectively
immersed minimal surface of finite genus, compact boundary and
a countable number of ends in $\R^3$, then $M$ is proper
(this follows from part 3-B of Theorem~1.3 in~\cite{mpr9}).
Since properly embedded minimal annuli $E\subset \rth$ with compact
boundary and finite total curvature are conformally punctured disks,
are asymptotic to the ends of planes and catenoids and have a well-understood
analytic description in terms of meromorphic
data on their conformal completion, we will focus our attention on
the case that the minimal annulus has infinite total curvature.

\begin{theorem}
\label{th1.1}
Let $E\subset \rth$ be a complete, embedded minimal annulus with
infinite total curvature and compact boundary.
Then, the following properties hold:
\begin{enumerate}[1.]
\item $E$ is properly embedded in $\R^3$.
\item $E$ is conformally diffeomorphic to a punctured disk.
\item After applying a suitable homothety and rigid motion to a subend of $E$, then:
\begin{enumerate}[a.]
\item The holomorphic height differential $dh=dx_3+idx_3^*$ of $E$
is $dh=(1+\frac{\l }{z-\mu })\, dz$, defined on $D(\infty ,R)=\{z\in
\C\mid  |z|\geq R \}$ for some $R>0$, where $\l \geq 0$ and $\mu
\in \C$. In particular, $dh$ extends meromorphically across
 infinity with a double pole.
\item The stereographic projection $g\colon D(\infty ,R)\to \C\cup\{\infty\}$
of the Gauss map of $E$ can be expressed as $g(z)=e^{iz+f(z)}$ for some
holomorphic function $f$ in $D(\infty ,R)$ with $f(\infty )=0$.
\item $E$ is asymptotic to the end of a helicoid if and only if it
has zero flux (in particular, the number $\l $ in the previous item
3a vanishes in this case).
\end{enumerate}
\end{enumerate}
\end{theorem}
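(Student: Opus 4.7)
The plan is to prove the four conclusions in order, bootstrapping each one from the previous and relying heavily on the asymptotic helicoidal picture of the single end of $E$. Item~1 requires no new input: one invokes the results of Colding--Minicozzi in~\cite{cm35} (or, alternatively, Theorem~1.3 of~\cite{mpr9} as discussed in the introduction) to conclude that $E$ is properly embedded in $\R^3$. From here on $E$ may be assumed proper.

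For item~2, the key tool is the limit-lamination and one-sided curvature theory of Colding--Minicozzi, applied as in~\cite{mr8,bb2}: rescaling $E$ at the puncture along any divergent sequence of scales produces a limit foliation of $\R^3$ by parallel planes with a single Lipschitz singular axis. Consequently a subend of $E$ looks like a highly sheeted multi-graph around one axis, i.e.\ the end is helicoid-like at infinity. From this picture one reads off that $x_3$ is proper and its conjugate harmonic function $x_3^*$ is multi-valued with a well-understood monodromy, so the end must be parabolic and hence conformally a punctured disk $\overline E=D(\infty ,R)$. For items~3a and~3b, one then works on $D(\infty ,R)$. The helicoidal picture shows that $g$ omits $0$ and $\infty$ on a subend and that horizontal tangent planes do not accumulate at infinity, so $\log g$ is single-valued and $dh=h(z)\,dz$ with $h$ nowhere zero. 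Writing the flux of $E$ along a loop around $\infty$ in Weierstrass form produces a residue condition on $dh$ which, combined with the comparison $|dh|\sim |dz|$ coming from the helicoidal model, forces $h$ to extend meromorphically across $\infty$ with a double pole and simple residue $\l $; after a horizontal translation placing the pole at $\mu $ one obtains $dh=(1+\frac{\l }{z-\mu })\,dz$. In the same way, the sharper estimate $\log |g|\sim -\mathrm{Im}\,z$ on a subend coming from the curvature estimates of~\cite{cm34} gives $\log g-iz=f(z)$ with $f$ bounded at $\infty$, and therefore holomorphically extendable with $f(\infty )=0$ after absorbing the constant term via a rotation.

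For item~3c one computes the flux vector of $E$ directly from $(g,dh)=(e^{iz+f(z)},(1+\frac{\l }{z-\mu })\,dz)$; each of its three components is a linear function of $\l $, so zero flux is equivalent to $\l =0$. When $\l =0$ the Weierstrass data of $E$ differs from that of the standard helicoid $(e^{iz},dz)$ only by the holomorphic correction $f$ vanishing at infinity, and a direct estimate (of Schwarz-lemma type applied to $f$) shows that $E$ is then asymptotic to the end of a helicoid. The principal obstacle in all of this is the helicoidal asymptotic picture at infinity, on which items~2, 3a and~3b uniformly depend; its proof is the nontrivial combination of the lamination and one-sided curvature theorems of~\cite{cm23,cm34}, the properness result of~\cite{cm35} and the blow-up/uniqueness argument of Meeks--Rosenberg~\cite{mr8}, and the main simplification our paper offers is to feed the sharper estimates of~\cite{cm34} into the Meeks--Rosenberg scheme. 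Once the geometric picture is available, the remaining work amounts to standard removable-singularity bookkeeping together with the identification of the affine normalization (homothety, rotation, translation) that puts $(g,dh)$ into the canonical form stated in~3a and~3b.
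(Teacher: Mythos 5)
Your outline follows the paper's broad strategy (properness from Colding--Minicozzi, rescaling to a planar foliation with a single singular axis, conformal structure and Weierstrass data on $D(\infty,R)$, then normalization), but two of your steps are genuinely wrong rather than merely compressed. The first is the claim that, because $g$ omits $0$ and $\infty$ on a subend, $\log g$ is single-valued. On an annular end this does not follow: $D(\infty,R)$ is not simply connected, and a nowhere-zero holomorphic $g$ there has the form $z^k e^{H(z)}$, where $k$ is the winding number of $g$ along a generator of the fundamental group. Proving $k=0$ is one of the main technical points of the paper: the Assertion inside the proof of Theorem~\ref{ft} builds an explicit boundary loop $\G=\a_B^{r'}\cup\b_1\cup\a_T^{r'}\cup\b_2$ from two spiraling arcs on a vertical cylinder and two almost-horizontal arcs through points where the surface is close to a small helicoid, and tracks the argument of $g$ along each piece (using $\partial u_i/\partial\theta>0$ for the multivalued graphing functions) to show the total winding vanishes. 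Without this, or a substitute, the form $g=e^{iz+f(z)}$ is not established. Relatedly, your derivation of the double pole of $dh$ and of $\log|g|\sim-\mathrm{Im}\,z$ presupposes uniform helicoidal behavior on the entire end, whereas the paper must first rule out an essential singularity and a polynomial of degree at least two for $H$ (Cases A and B in the proofs of Theorems~\ref{t4.1} and~\ref{ft}) via blow-up limits, Picard's theorem and a count of the components of $g^{-1}(\g)$; the conformal type and the meromorphic extension of $dh$ themselves are obtained from Corollary~1.2 of the earlier Meeks--P\'erez paper invoked at the end of the proof of Proposition~\ref{p3.1}, after showing that a horizontal plane meets the subend in a single proper curve.

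The second error is in item~3c. The number $\l$ is (up to the factor $-2\pi$) only the \emph{vertical} component of the flux; the horizontal component is $i\int_{\partial_R}g\,dh$, which is not a linear function of $\l$ and need not vanish when $\l=0$. Indeed Section~\ref{sec6} constructs, for every $a>0$, examples $E^{a,0}$ with $\l=0$ and horizontal flux of length $a$, and by items~\ref{it7} and~\ref{it9} of Theorem~\ref{main} these are not asymptotic to any helicoid, since their top and bottom limit helicoids have axes offset by $(0,a/2,0)$. So your assertion that zero flux is equivalent to $\l=0$ is false, and your Schwarz-lemma argument would wrongly conclude that every end with $\l=0$ is asymptotic to a helicoid. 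The correct proof of the equivalence in 3c goes through the asymptotic classification: item~\ref{it8} of Theorem~\ref{main} shows that every such end is asymptotic, after translation, to the canonical example $E_{a,b}$ determined by its flux vector $(a,0,-b)$, and item~\ref{it9} shows that distinct pairs $(a,b)$ give non-asymptotic surfaces; hence being asymptotic to a helicoid, which is $E_{0,0}$, forces $(a,b)=(0,0)$, and conversely.
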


Since complete embedded minimal surfaces with finite topology and empty boundary are
properly embedded in $\R^3$\cite{cm35}, and properly embedded minimal
surfaces with more than one end and finite topology have finite
total curvature~\cite{col1}, then we have the following immediate
corollary (which also appears in~\cite{bb2}). Note also that the zero flux condition in
item~3c of Theorem~\ref{th1.1} holds by Cauchy's theorem when the surface has just
one end.
\begin{corollary}
\label{c1.2} Suppose that $M\subset \rth$ is a complete, embedded
minimal surface with finite topology and empty boundary. Then, $M$ is conformally a
compact Riemann surface $\overline{M}$ punctured in a finite number
of points and $M$ can be described analytically in terms of
meromorphic data $\frac{dg}{g},dh$ on $\overline{M}$, where
$g$ and $dh$ denote the stereographically projected Gauss map and
the height differential of $M$, after a suitable rotation in $\R^3$.
Furthermore, $M$ has bounded
Gaussian curvature and each of its ends is asymptotic in the $C^k$-topology 
(for any $k\in \N$) to a plane or a half-catenoid (if $M$ has
finite total curvature) or to a helicoid (if $M$ has infinite total
curvature).
\end{corollary}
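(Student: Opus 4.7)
The plan is to reduce the corollary to a dichotomy on the number of ends and then invoke Theorem~\ref{th1.1} together with the classical theory of finite-total-curvature ends. First, properness of $M$ in $\R^3$ is cited directly from Colding--Minicozzi~\cite{cm35}, so I may assume $M$ is properly embedded. Since $M$ has finite topology and empty boundary, each topological end is represented by an annular neighborhood of a puncture, and there are only finitely many such ends. I would split into the cases (a) $M$ has more than one end, and (b) $M$ has exactly one end.

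In case (a), Collin's theorem~\cite{col1} gives that $M$ has finite total curvature. Then the classical theory (Huber's theorem together with Osserman's representation) provides the conformal compactification $\overline{M}$ by adding one point at each end, exhibits $g$ and $dh$ as meromorphic data on $\overline{M}$, and shows that each end is $C^k$-asymptotic to a plane or a half-catenoid with bounded Gaussian curvature; nothing further is required. In case (b), either $M$ has finite total curvature (and we are again in the previous setting, with the single end being a plane or a half-catenoid) or $M$ has infinite total curvature. In the latter subcase, I would pick an annular subend $E$ of $M$ with smooth compact boundary; $E$ inherits infinite total curvature, embedding, and completeness, so Theorem~\ref{th1.1} applies directly. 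It yields the conformal punctured-disk structure at infinity, which combined with the compact topology of the rest of $M$ produces the compactification $\overline{M}$, and it yields the explicit meromorphic forms for $dh$ and $g=e^{iz+f(z)}$, hence the meromorphic description in terms of $dg/g$ and $dh$ on $\overline{M}$.

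To upgrade from ``asymptotic to the end of a helicoid'' to the statement of the corollary in the one-ended infinite total curvature case, I would verify the zero flux hypothesis of Theorem~\ref{th1.1}.3c. Because $M$ has one end and empty boundary, any closed curve $\g$ in the annular subend bounds a compact portion of $M$ together with the boundary of the subend; since $M$ is closed and boundaryless, the conjugate differential $dh^*$ and the analogous one-forms associated to the flux are closed, and an application of Stokes/Cauchy's theorem (as noted explicitly after the statement of Corollary~\ref{c1.2} in the excerpt) forces the flux of $\g$ to vanish. Hence $\l=0$ in item~3a and $E$ is asymptotic to a helicoid. Bounded Gaussian curvature in this case follows from the explicit Weierstrass data: the formula $g(z)=e^{iz+f(z)}$ with $f(\infty)=0$ together with $dh=(1+\l/(z-\mu))\,dz=dz$ on $|z|\ge R$ makes $|dg/g|$ and $|dh|$ comparable at infinity, and a direct computation of $K$ via the Weierstrass representation shows $|K|$ remains bounded. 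The same expressions give, after a change of coordinates adapted to the helicoid, $C^k$-asymptotic convergence to a standard helicoid for every $k$.

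The main obstacle I foresee is not in any one of these steps individually but in the smooth transition between cases: in particular, ensuring that the infinite-total-curvature, one-ended case really does satisfy the zero flux hypothesis of Theorem~\ref{th1.1}.3c, and that the meromorphic representation obtained at infinity via Theorem~\ref{th1.1} glues consistently with the structure on the compact core of $M$ to produce a single compact Riemann surface $\overline{M}$ carrying $g$ and $dh$ as global meromorphic objects. Both are essentially bookkeeping once the two ingredients (Theorem~\ref{th1.1} and the classical finite-total-curvature theory) are in hand, but the flux vanishing step deserves the most care because it is what actually rules out the catenoidal-type asymptotic growth ($\l>0$) in the helicoidal regime.
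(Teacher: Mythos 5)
Your proposal is correct and follows essentially the same route as the paper, which presents the corollary as an immediate consequence of Theorem~\ref{th1.1} together with properness from~\cite{cm35}, Collin's theorem~\cite{col1} for the multi-ended case, and Cauchy's theorem for the vanishing of the flux when there is only one end. The extra detail you supply on bounded curvature and the $C^k$-asymptotics via the explicit Weierstrass data is consistent with what Theorem~\ref{th1.1} and the classical finite-total-curvature theory already provide.
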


In order to state the next result, we need the following notation.
Given $R,h>0$, let
\begin{align}
C(R)=\{ (x_1,x_2,x_3)\mid x_1^2+x_2^2\leq R^2\} ,\label{eq:cyl}
\\
C(R,h)=C(R)\cap \{ |x_3| \leq h\} , \label{eq:cylcap}
\\
D(\infty ,R)=\{ x_3=0\} \cap
[\R^3-\mbox{Int}(C(R))]. \nonumber
\end{align}
A {\it multivalued graph} over $D(\infty ,R)$ is the graph $\Sigma =\{ (re^{i\theta
},u(r,\theta ))\} \subset \C \times \R \equiv \R^3$ of a smooth function
$u=u(r,\theta )$ defined on the universal cover
$\widetilde{D}(\infty ,R)=\{ (r,\theta ) \mid r\geq R,\ \theta \in
\R \} $ of $D(\infty ,R)$.

The next descriptive result is a consequence of Theorem~\ref{main} below, and  provides
 the existence of a  collection ${\cal E}=\{E_{a,b}\mid {a,b\in [0,\infty)}\}$
of properly embedded minimal annuli in $\rth$ such that
any complete,
embedded minimal annulus $E$ with compact boundary and infinite total curvature
in $\R^3$ is, after a rigid motion, asymptotic to a homothetic scaling
of exactly one of the surfaces in ${\cal E}$. The image in
Figure~\ref{Eab} describes how the flux vector
$$(a,0,-b)=\int _{\partial E_{a,b}} \eta $$ of the minimal annulus
$E_{a,b}$  influences its geometry (here $\eta $ refers to the unit conormal
vector exterior to $E_{a,b}$ along its boundary).

\vspace{-.5cm}
\begin{figure}
\begin{center}
\includegraphics[height=7.7cm]{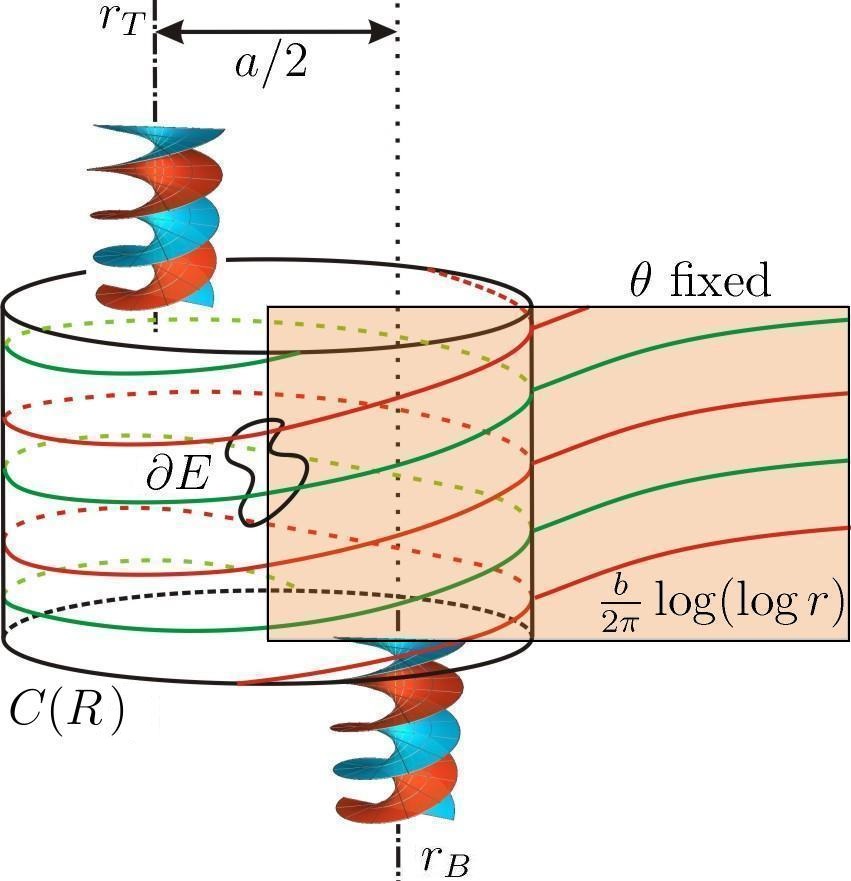}
\caption{The embedded annulus $E=E_{a,b}$ with flux vector $(a,0,-b)$ (see
Theorem~\ref{thm1.3}) has the following description.
Outside the infinite vertical cylinder $C(R)$, $E$ consists of two horizontal
multivalued graphs with asymptotic spacing $\pi $ between them.
The translated surfaces $E-(0,0,2\pi n+\frac{b}{2\pi }\log n)$
(resp. $E+(0,0,2\pi n-\frac{b}{2\pi }\log n)$) converge as $n\to
\infty $ to a vertical helicoid $H_T$ (resp. $H_B$) such that
$H_B=H_T+(0,a/2,0)$ (in the picture, $r_T,r_B$ refer to the axes of
$H_T,H_B$). The intersection of $E-C(R)$ with a vertical half-plane
bounded by the $x_3$-axis consists of an infinite number of curves,
each being a graph of a function $u(r)$ that satisfies the property
$\frac{u(r)}{\log (\log r)}$ converges to $\frac{b}{2\pi }$ as the
radial distance $r$ to the $x_3$-axis tends to~$\infty $.
} \label{Eab}
\end{center}
\end{figure}

\vspace{.4cm}
\begin{theorem}[Asymptotics of embedded minimal annular ends]
\label{thm1.3} Given $a,b\geq 0$, there exist $R>0$
and a properly embedded minimal annulus $E_{a,b}\subset
\R^3$ with compact boundary and flux vector $(a,0,-b)$ along its
boundary, such that the following statements hold.
\begin{enumerate}[1.]
\item $E_{a,b}-C(R)$ consists of two disjoint multivalued graphs $\Sigma _1,
\Sigma _2$ over $D(\infty ,R)$ of functions $u_1,u_2\colon
\widetilde{D}(\infty ,R)\to \R $ such that their gradients, with respect to  the metric
on $\widetilde{D}(\infty ,R)$ obtained by pulling back the standard flat metric 
in $D(\infty ,R)$, satisfy
$\nabla u_i(r,\theta )\to 0$ as $r\to \infty $, and the separation
function $w(r,\theta )=u_1(r,\theta )-u_2(r,\theta )$ between both
multivalued graphs converges to $\pi $ as $r+|\theta |\to \infty $.
Furthermore for $\t $ fixed and $i=1,2$,
\begin{equation}
\label{eq:graphr}
 \lim_{r\to \infty} \frac{u_i(r,\theta)}{\log
(\log(r))} =\frac{b}{2\pi }.
\end{equation}
\item The translated surfaces $E_{a,b}-(0,0,2\pi n+\frac{b}{2\pi
}\log n)$ (resp. $E_{a,b}+(0,0,2\pi n-\frac{b}{2\pi }\log n)$)
converge as $n\to \infty $ to a right-handed vertical helicoid $H_T$  (resp.
$H_B$) with maximum absolute Gaussian curvature 1 along its axis such that
\begin{equation}
\label{eq:HB-HT}
H_B=H_T+(0,a/2,0).
\end{equation}
Note that equations (\ref{eq:graphr}), (\ref{eq:HB-HT})
imply that for different values of $a,b$, the
related surfaces $E_{a,b}$ are not asymptotic after a rigid motion
and homothety.
\item The annuli $E_{0,b}$, $b\in \R $,
are each  invariant under reflection across the $x_3$-axis $l$,
and $l\cap E_{0,b}$ contains  two infinite rays. Furthermore, $E_{0,0}$ is
the end of a vertical helicoid.
\item Up to a rotation and homothety by some $\rho \in \R -\{0\}$, every complete,
embedded minimal annulus in $\R^3$ with compact boundary
and infinite total curvature is asymptotic
to exactly one of the surfaces $E_{a,b}$.
\end{enumerate}
\end{theorem}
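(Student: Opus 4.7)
The approach is to use Theorem~\ref{th1.1} both as a rigidity statement, classifying the Weierstrass data of any candidate surface, and as a recipe for constructing the reference family. For each $(a,b)\in[0,\infty)^2$ I would produce Weierstrass data on some $D(\infty,R)$ of the canonical form
\[
dh=\left(1+\frac{\lambda}{z-\mu}\right)dz,\qquad g(z)=e^{iz+f(z)},
\]
with $\lambda=b/(2\pi)$, $\mu\in\C$, and $f$ holomorphic at infinity with $f(\infty)=0$. The vertical flux is automatically $-2\pi\lambda=-b$, from the residue of $dh$ at $\mu$. Prescribing the horizontal flux to equal $(a,0)$ and requiring the real periods of $\phi_1,\phi_2$ to vanish (so that $X=\mathrm{Re}\int\Phi$ is single-valued on $D(\infty,R)$) amounts to four real equations on the pair $(\mu,f)$; these can be solved by taking the Laurent expansion $f(z)=\sum_{n\geq 1}c_{-n}z^{-n}$ and tuning $\mu$ together with the first few coefficients, leaving the higher-order tail free. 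For $a=0$ I would further require $\mu\in\R$ and $f(\bar z)=\overline{f(z)}$ so that $E_{0,b}$ is invariant under $180^{\circ}$ rotation about the $x_3$-axis and the intersection with the fixed axis gives the two rays of item~3; for $(a,b)=(0,0)$ one recovers the end of a standard helicoid. Embeddedness for $R$ large then follows because $|g|=e^{-\mathrm{Im}(z)+\mathrm{Re}(f(z))}$ separates $D(\infty,R)$ into two regions $\{|g|<1\}$ and $\{|g|>1\}$ which parametrize two disjoint multivalued graphs, each injective by the helicoid-like structure.

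For items~1--2 I would carry out the asymptotic analysis of the data. Since $x_3=\mathrm{Re}(z)+\lambda\log|z-\mu|+O(1)$ and $g=e^{iz}(1+o(1))$, the regions $\{\mathrm{Im}(z)>0\}$ and $\{\mathrm{Im}(z)<0\}$ give the two multivalued graphs $\Sigma_1,\Sigma_2$, with gradient decay and the $\pi$-asymptotic separation inherited from the helicoid. For the $\log\log r$-rate in~\eqref{eq:graphr}, the horizontal radius on each sheet satisfies $r\sim\tfrac{1}{2}e^{|\mathrm{Im}(z)|}$, so $|z|\sim\log r$ and $\lambda\log|z|\sim\frac{b}{2\pi}\log\log r$. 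For the helicoid limits in item~2, the shift $z\mapsto z+2\pi n$ converts $(g,dh)$ into data converging locally uniformly on compact sets to $(e^{iz},dz)$, the Weierstrass data of a right-handed vertical helicoid with maximum $|K|=1$ along its axis; subtracting $2\pi n+\frac{b}{2\pi}\log n$ from the third coordinate cancels exactly the integer-plus-logarithmic drift of $x_3$ and pins down the limit $H_T$. The analogous analysis on the opposite sheet yields $H_B$, and the horizontal offset~\eqref{eq:HB-HT} encodes the horizontal flux: it arises from integrating $\phi_1+i\phi_2=-g\,dh$ along a path joining the two sheets through the middle of $D(\infty,R)$ and equals exactly one half of the horizontal-flux period.

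For the uniqueness statement in item~4, let $E$ be any complete embedded minimal annulus with compact boundary and infinite total curvature. Theorem~\ref{th1.1} supplies Weierstrass data $(\tilde g,\widetilde{dh})$ of the canonical form on some $D(\infty,\tilde R)$ after a homothety and rigid motion; a further rotation about the $x_3$-axis aligns the horizontal flux with $(a,0)$, $a\geq 0$. The asymptotic analysis in items~1--2 depends only on $\lambda$ and on the horizontal-flux period of $\phi_1+i\phi_2$, both of which are determined by $(a,b)$, so $E$ shares the helicoidal limits, the $\pi$-separation, and the $\log\log r$-growth of $E_{a,b}$, and the distance from $E$ to a suitable translate of $E_{a,b}$ tends to zero at infinity. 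The main obstacle will be the precise identification of the offset $a/2$ in~\eqref{eq:HB-HT}: because $g=e^{iz+f(z)}$ has an essential singularity at $z=\infty$, one cannot apply classical residue calculus to $\oint g\,dh$ and must extract the leading oscillatory behavior from a Laplace-type asymptotic integration, uniformly in $n$. The construction step is more elementary---finitely many period conditions on an infinite-dimensional space of Laurent tails---and the uniqueness step reduces to bookkeeping once the asymptotic analysis is in hand.
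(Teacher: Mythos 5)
Your overall blueprint coincides with the paper's: construct canonical examples from Weierstrass data of the normal form $\bigl(e^{iz+f(z)},(1+\frac{\l}{z-\mu})dz\bigr)$, read off the asymptotics (multivalued graphs, $\log\log r$ growth, helicoid limits, the $a/2$ offset) directly from the data, and then invoke Theorem~\ref{th1.1} to reduce the classification in item~4 to matching flux vectors. Your heuristics for items~1--2 ($y\sim\log(2r)$, hence $\l\log|z|\sim\frac{b}{2\pi}\log\log r$; the shift $z\mapsto z+2\pi n$ producing the helicoid limit) are exactly the computations carried out in items~6 and~7 of Theorem~\ref{main}.

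The genuine gap is the construction step, which you dispose of in one sentence: ``four real equations on the pair $(\mu,f)$ \ldots can be solved by tuning $\mu$ together with the first few coefficients, leaving the higher-order tail free.'' This is the heart of the matter and the paper spends all of Section~\ref{sec6} on it. The period and flux conditions are transcendental in the parameters (they involve $e^{i\overline{A}}$), there is no obvious base solution to perturb from except the helicoid $(a,b)=(0,0)$, and a local implicit-function argument cannot reach all of $[0,\infty)^2$. The paper instead fixes explicit finite-dimensional ansätze --- $g(z)=te^{iz}\frac{z-A}{z}$ for nonvertical flux and $g(z)=e^{iz}\frac{z-A}{z-\overline{A}}$ for vertical flux --- computes the period integrals exactly by residues at the finite poles inside $\{|z|\le R\}$, and proves solvability by monotonicity of the arguments $\t_L,\t_R$ plus the intermediate value theorem (Assertions~\ref{ass6.1}--\ref{ass6.4} and~\ref{ass6.6}). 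Relatedly, the obstacle you flag as central --- that the essential singularity of $g$ at $\infty$ blocks residue calculus for $\oint g\,dh$ --- is a misdiagnosis: with these choices $e^{f(z)}$ is rational and $e^{iz}$ is entire, so $g\,dh$ is meromorphic on the bounded disk and $\oint_{\partial_R}g\,dh$ is an exact residue computation; the $a/2$ offset in item~\ref{it5} of Theorem~\ref{main} is then obtained by elementary exponential-decay and alternating-series estimates along a square contour, not by Laplace asymptotics. Finally, your embeddedness argument (splitting $D(\infty,R)$ by $|g|\lessgtr 1$) only yields two multivalued graphs; to conclude the surface is embedded you still need the separation $w\to\pi>0$ outside a cylinder (which requires the quantitative estimates of Assertion~\ref{ass2}) and the helicoid limit of item~\ref{it7} to control the region inside the cylinder, as the paper does in proving item~\ref{it1}.
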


As mentioned above, Bernstein and Breiner~\cite{bb2} have given a proof of
Corollary~\ref{c1.2} in the infinite total curvature setting, which
is just Theorem~\ref{th1.1} in the special case that the annulus $E$
is the end of a complete, embedded minimal surface $M$ with finite
topology, one end and empty boundary. Our proof of
Theorem~\ref{th1.1} and the proof of Bernstein and Breiner in the
above special case, apply arguments that Meeks and
Rosenberg~\cite{mr8} used to understand the asymptotic behavior of
an $E$ with zero flux, and to show that such an $E$ has finite
type\footnote{See Definition~\ref{deffinitetype} for the notion of a
minimal surface of finite type.} (even in the case with nonzero flux, this finite type
property will follow from application of Corollary~1.2 in our previous paper~\cite{mpe5}).
Once $E$ is proven to have finite
type, then Meeks and Rosenberg, as well as Bernstein and Breiner,
appeal to results of Hauswirth, P\'erez and
Romon~\cite{hkp1} on the geometry of embedded, minimal annular ends
of finite type to complete the proof. In our proof of
Theorem~\ref{thm1.3}, we essentially avoid reference to the results
in~\cite{hkp1} by using self-contained arguments, some of which have
a more geometric, less analytic nature than those in~\cite{hkp1}.
Central to most of these proofs are the results of Colding and
Minicozzi \cite{cm34,cm22,cm24,cm23,cm35,cm25}.

We refer the interested reader to our paper~\cite{mpr9} with
Ros on the embedded Calabi-Yau problem. More precisely, this paper explains
the structure of simple limit ends of complete, injectively
immersed minimal surfaces $M\subset \rth$ of finite genus and
compact boundary, which together with Theorems~\ref{th1.1}
and~\ref{thm1.3} in this paper, improve our understanding of the
conformal structure, asymptotic behavior and analytic description of
the ends of properly embedded minimal surfaces in $\rth$ that have
compact boundary and finite genus. An outstanding open question
related to the embedded Calabi-Yau problem asks whether every
complete, injectively immersed minimal surface in $\rth$ with finite
genus, compact boundary and an infinite number of ends, must have
only a countable number of ends, since such minimal surfaces are
properly embedded in $\rth$ with exactly one or two limit
ends; see~\cite{mpr9} for further discussion and partial results on
this open problem.

\section{Weierstrass representation of a minimal surface of finite type.}
\label{sec2}
Recall that the Gauss map  of an immersed minimal surface $M$ in $\R^3$
can be viewed as a meromorphic function $g\colon M\to \C \cup \{ \infty \} $
on the underlying Riemann surface, after stereographic projection
from the north pole of the unit sphere. Furthermore, the harmonicity of the
third coordinate function $x_3$ of $M$ allows us to define (at least locally)
its harmonic conjugate function $x_3^*$; hence, the {\it
height differential\/}\footnote{The height differential
 might not be exact since
$x_3^*$ need not be globally well-defined on~$M$. Nevertheless,
the notation $dh$ is commonly accepted and we will also make use of
 it here. }\;  $dh=dx_3+idx_3^*$
is a holomorphic differential on~$M$. As usual, we will refer to the
pair $(g,dh)$ as the {\it Weierstrass data} of $M$,
and the minimal immersion $X\colon M\to \R^3$ can be expressed up to
translation by $X(p_0)$, $p_0\in M$, by
\begin{equation}
\label{eq:repW}
 X(p)=\mbox{Re}\int _{p_0}^p\left( \frac{1}{2}\left(
 \frac{1}{g}-g\right)
,\frac{i}{2}\left( \frac{1}{g}+g\right) ,1\right) dh.
\end{equation}
It is well-known (see Osserman~\cite{os3} for details) that this procedure has a
converse, namely if $M$ is a Riemann surface, $g\colon M\to \C \cup
\{ \infty \} $ is a meromorphic function and $dh$ is a holomorphic
one-form on $M$, such that the zeros of $dh$ coincide with the poles
and zeros of $g$ with the same order, and for any closed curve $\g
\subset M$, the following equalities hold:
\begin{equation}
\label{eq:periodproblem}
    \overline{\int _{\g }g\, dh}=\int _{\g }\frac{dh}{g},\qquad
    \mbox{Re} \int _{\g }dh=0,
\end{equation}
(where $\overline{z}$ denotes the complex conjugate of $z \in \C$), then
the map $X\colon M\to \R^3$ given by {\rm (\ref{eq:repW})} is
a conformal minimal immersion with Weierstrass data $(g,dh)$.

Minimal surfaces $M$ which satisfy the property of having {\it finite type,}
as described in the next definition, can be defined analytically from data
obtained by integrating certain meromorphic one-forms on $M$, which moreover
extend meromorphically to the conformal completion of $M$.

\begin{definition}[Finite Type]
\label{deffinitetype}
{\rm A minimal immersion $X\colon M \to \rth$ is said to have
{\em finite type} if it satisfies the following two properties.
\begin{enumerate}[1.]
\item The underlying Riemann surface to $M$ is conformally diffeomorphic
to a compact Riemann surface $\overline{M}$ with (possibly empty)
compact boundary, punctured in a finite nonempty set ${\cal
E}\subset \Int(\overline{M})$.
\item Given an end $e\in {\cal E}$ of $M$, there exists
a rotation of the surface in $\rth$
such that if $(g,dh)$ is the Weierstrass data of $M$ after this rotation,
then $\frac{dg}{g}$ and $dh$ extend across the puncture~$e$ to meromorphic
one-forms on a neighborhood of $e$ in $\overline{M}$.
\end{enumerate}
}
\end{definition}

Therefore, complete immersed minimal surfaces with finite total
curvature are of finite type, and the basic example of a surface
with finite type but infinite total curvature is the helicoid ($M=\C
$, $g(z)=e^{iz}$, $dh=dz$). Finite type minimal surfaces were first
studied by Rosenberg~\cite{rose1}; we note that his definition of finite type
differs somewhat from our definition above. An in-depth study of {\it
embedded} minimal annular ends of finite type with exact height
differentials was made by Hauswirth, P\'erez and Romon
in~\cite{hkp1}. Some of the results in this paper extend the results
in~\cite{hkp1} to the case where the height differential of the surface might
possibly be not exact.

\section{The conformal type, height differential and Gauss map
of embedded minimal annuli.} \label{sec3} Throughout this section,
we will fix a complete, injectively immersed minimal surface $M\subset\rth$ with
infinite total curvature, where $M$ is an annulus diffeomorphic to
$\esf^1 \times [0,\infty)$. By the main result in Colding and
Minicozzi~\cite{cm35} (which holds in this setting of compact
boundary), $M$ is properly embedded in $\rth$. Our goal in this
section is to prove that after a rotation and a replacement of $M$
by a subend, $M$ is conformally diffeomorphic to $D(\infty ,1)$, its
height differential extends meromorphically across the puncture
$\infty $ with a double pole, and the meromorphic Gauss map $g\colon
D(\infty ,1) \to \C \cup \{\infty \}$ of $M$ has the form $g(z)= z^k
e^{H(z)}$, where $k\in \Z $ and $H$ is holomorphic in $D(\infty
,1)$.

In Theorem 1.2 of~\cite{mpr10}, Meeks, P\'erez and Ros proved that a
complete embedded minimal surface with compact boundary fails to
have quadratic decay of curvature\footnote{A properly embedded
surface $M\subset \rth$ has {\em quadratic decay of curvature} if
the function $p\in M\mapsto |K_M|(p)\, |p|^2$ is bounded, where
$|K_M|$ denotes the absolute Gaussian curvature function
 of $M$.} if and only if it does not have finite total curvature.
Since we are assuming that $M$ has infinite total curvature, then it
does not have quadratic decay of curvature.

Let $\{ \l_n\} _n \subset \R^+$ be a sequence with $\l _n\to 0$
as $n\to \infty $. We next analyze the structure of the limit of
(a subsequence of) the $\l _nM$. In our setting with compact
boundary we need to be careful at this point, since compactness
results as in Colding and Minicozzi~\cite{cm23,cm25} cannot be
applied directly. In the sequel, $\B (q,R)$ will denote the open ball of
radius $R>0$ centered at $q\in \R^3$, and $\overline{\B}(q,R)$ its closure.
\begin{lemma}
\label{lema3.1}
The sequence $\{ \l_nM\} _n$ has locally positive injectivity radius in $\R^3-\{
\vec{0}\} $, in the sense that for every $q \in \R^3-\{ \vec{0}\} $,
there exists $\ve _q\in (0,|q|)$ and $n_q\in \N $ such that for $n>n_q$, the
injectivity radius functions of the surfaces  $\l _nM$ restricted to $\B (q,\ve _q)
\cap (\l _nM)$ form a sequence of functions
which is uniformly bounded away from zero.
\end{lemma}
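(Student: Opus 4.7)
The plan is to argue by contradiction using the Colding--Minicozzi compactness theory for embedded minimal surfaces. Suppose the conclusion fails at some $q\in\R^3\setminus\{\vec{0}\}$. Then, after extraction and after fixing $\ve_q\in(0,|q|)$ so that $\overline{\B}(q,\ve_q)\subset\R^3\setminus\{\vec{0}\}$, there exist points $p_n\in\B(q,\ve_q)\cap(\l_nM)$ with $r_n:=\operatorname{inj}_{\l_nM}(p_n)\to 0$; passing to a further subsequence, $p_n\to p_\infty$ with $|p_\infty|\geq|q|-\ve_q>0$. Because $\partial M$ is compact and $\l_n\to 0$, the scaled boundaries $\l_n\partial M$ are confined to shrinking neighborhoods of $\vec{0}$ and are eventually disjoint from $\overline{\B}(q,\ve_q)$, so near each $p_n$ the surface $\l_nM$ is boundaryless.

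Next I would perform a second rescaling, defining $\widetilde M_n:=r_n^{-1}(\l_nM-p_n)$. Each $\widetilde M_n$ is a properly embedded minimal annulus in $\R^3$ (inherited from $\l_nM$, whose properness is exactly the cited Colding--Minicozzi result~\cite{cm35}), satisfies $\operatorname{inj}_{\widetilde M_n}(\vec{0})=1$, and has boundary at Euclidean distance $\geq |p_n|/r_n\to\infty$ from the origin. The monotonicity formula then provides uniform local area bounds near $\vec{0}$, and I would try to pass to a subsequential limit $\widetilde M_\infty$ of the $\widetilde M_n$.

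Two subcases then arise. If the Gaussian curvatures of $\widetilde M_n$ are locally uniformly bounded on $\R^3$, then standard elliptic compactness yields a smooth, nonflat, complete, properly embedded minimal surface $\widetilde M_\infty\subset\R^3$ of at most annular topology with $\operatorname{inj}_{\widetilde M_\infty}(\vec{0})=1$. By the classification of complete properly embedded minimal surfaces with finite topology, $\widetilde M_\infty$ must be a plane, helicoid, or catenoid; planes and helicoids have everywhere infinite injectivity radius, and a catenoid limit would, on unraveling the two rescalings, force $M$ itself to contain catenoidal necks of a definite intrinsic scale accumulating in its single end, contradicting the assumption that $M$ is an annulus of infinite total curvature (each definite-scale neck carrying a fixed contribution to the total curvature and being incompatible with one-ended annular topology). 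If instead the curvatures of $\widetilde M_n$ blow up, a further blow-up at a curvature maximum, together with the one-sided curvature estimate and lamination theorem of Colding--Minicozzi~\cite{cm22,cm23,cm34}, produces a helicoid as blow-up limit, whose injectivity radius is everywhere infinite, again contradicting $\operatorname{inj}_{\widetilde M_n}(\vec{0})=1$ in the limit.

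I expect the exclusion of a catenoidal smooth limit in the bounded-curvature subcase to be the main obstacle. Ruling it out requires combining the infinite total curvature hypothesis with the annular topology of $M$ and the properness of $\l_nM$ from~\cite{cm35}, in order to preclude the accumulation of definite-scale catenoidal necks in any blow-up. Once both blow-up regimes are excluded, the contradiction is complete and the lemma follows.
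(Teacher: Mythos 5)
Your blow-up strategy is a genuinely different route from the paper's, but as written it has gaps that I do not think can be patched without importing the paper's actual argument. The most serious one is exactly the step you flag as the main obstacle: excluding a catenoidal limit. Your heuristic --- that definite-scale necks accumulating in the end are ``incompatible with one-ended annular topology'' because each carries a fixed contribution to the total curvature --- points in the wrong direction: an infinite sequence of disjoint necks, each contributing roughly $-4\pi$, is perfectly consistent with \emph{infinite} total curvature, which is the standing hypothesis. What actually kills this configuration is that the short loops are (almost) geodesic, hence by Gauss--Bonnet cannot bound disks in a nonpositively curved surface, hence are parallel to $\partial M$ in the annulus; Gauss--Bonnet applied to the annulus \emph{between} two such loops then bounds its total curvature by the turning of the boundary (at most $2\pi$, independent of the loop), and since these annuli exhaust the end, $M$ would have finite total curvature --- contradiction. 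That Gauss--Bonnet step is the entire content of the paper's proof, and your proposal does not contain it. Two further problems: (i) the ``classification of complete properly embedded minimal surfaces with finite topology'' as plane, helicoid or catenoid is false as stated (helicoids with handles, genus-$k$ examples with several ends), the second-rescaling limit need not have annular or even finite topology, and in any case invoking the uniqueness of the helicoid here is circular, since this lemma feeds into the paper's simplified proof of that very theorem; (ii) in the unbounded-curvature subcase, a helicoidal blow-up at a curvature maximum lives at a scale much smaller than the injectivity-radius scale you normalized to $1$, so it yields no contradiction with $\operatorname{inj}_{\widetilde M_n}(\vec{0})=1$.

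The paper's proof is short and intrinsic and avoids all of this machinery: if the injectivity radius of $\l_n M$ degenerates at points converging to $q\neq\vec 0$, nonpositive Gaussian curvature produces pairwise disjoint embedded geodesic loops $\g_n'\subset \B(q,1/n)\cap(\l_n M)$ with at most one cusp and lengths tending to zero; the corresponding loops $\g_n\subset M$ diverge in $M$, are homotopically nontrivial by Gauss--Bonnet, hence are parallel to $\partial M$, and a second application of Gauss--Bonnet to the annuli they bound forces $M$ to have finite total curvature, a contradiction. I would recommend you adopt this argument; your compactness approach could perhaps be completed, but only by re-deriving the Gauss--Bonnet step in the catenoid case, at which point the blow-up scaffolding is superfluous.
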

\begin{proof}
Arguing by contradiction, suppose that there exists a point $q\in \R^3-\{ \vec{0}\} $
and points $q_n\in \l _nM$ converting to $q$ as $n\to \infty $, such that
the injectivity radius functions of $\l _nM$ at $q_n$ converges to zero as $n\to \infty $.
Since the $\l _nM$ have nonpositive Gaussian curvature, then
(after extracting a subsequence)
we can find a sequence of pairwise disjoint embedded geodesic loops
$\g '_n\subset \B (q,1/n)\cap (\l _nM)$ with at most one
cusp at $q_n$, and with the lengths of these loops tending to zero as $n$ tends to infinity.
These curves $\g '_n$ produce related pairwise disjoint
embedded geodesic loops $\g _n$ on $M$ with at most one cusp, and
the sequence $\{ \g _n\} _n$ diverges in $M$. By the Gauss-Bonnet
formula, the curves $\g _n$ are homotopically nontrivial and hence
topologically parallel to the boundary of~$M$ (since $M$ is an annulus). Therefore, the Gauss-Bonnet
formula implies that $M$ has finite total curvature, which is a
contradiction. This contradiction proves the lemma.
\end{proof}

Since the surface $M$ does not have quadratic decay of curvature,
there exists a divergent sequence of points $\{ p_n\} _n\subset M$
(hence $p_n$ also diverges in $\R ^3$)
such that $|K_M|(p_n)\, |p_n|^2\to \infty $ as $n\to \infty $.
Consider the sequence of positive numbers $\l _n=|p_n|^{-1}\to 0$.
By Lemma~\ref{lema3.1}, the sequence $\{ \l_nM\} _n$
has locally positive injectivity radius in $\R^3-\{ \vec{0}\} $. This
property lets us apply Theorem~1.6 of~\cite{mpr11}
to the sequence of surfaces with boundary
$M_n=(\l _nM)\cap \overline{\B }(\vec{0},n) \subset \R^3-\{ \vec{0}\} $
and to the closed countable set $W=\{ \vec{0}\} $; since the
$M_n$ have genus zero and compact boundary,
then item~7.3 of Theorem~1.6 of~\cite{mpr11} ensures that
after extracting a subsequence, the
$M_n$ converge as $n\to \infty $ to a foliation ${\cal
F}$ of $\rth$ by parallel planes. The convergence of the $M_n$ to ${\cal F}$
is $C^{\a}$, $\a \in (0,1)$, away
from a set $S$ that consists of one or two straight lines orthogonal to the planes in ${\cal
F}$. The case of two lines in $S$ does not occur by the following reason;
if it occurs, then there is a limit parking garage structure with two oppositely
oriented columns (see Section~3 of~\cite{mpr14} for the notion of parking garage structure),
and the surfaces $M_n$ contain closed geodesics $\g _n$ that converge with multiplicity two to
a straight line segment that joins orthogonally the two column lines (see item~(5.2) in
Theorem~1.1 of~\cite{mpr14} for a similar statement, whose proof can be adapted to this
situation). After passing to a subsequence, the geodesics $\G _n=|p_n| \g _n\subset M$
are pairwise disjoint, which is impossible by the Gauss-Bonnet formula applied to the
annulus $A(n)\subset M$ bounded by $\G_1\cup \G_n$, as $M$ has infinite total curvature.
Therefore, $S$ consists of a single straight line.

Also note that under these homothetic shrinkings of $M$, the boundary
components  $\partial _n$ of $M_n$ corresponding to the homothetically shrunk
boundary component of $M$ collapse
into the origin $\vec{0}$ as $n\to \infty$. We claim that the
line $S$ passes through $\vec{0}$. To prove this,
suppose the claim fails and take $R>0$ such that $S$ does not intersect the closed
ball $\overline{\B }(\vec{0},4R)$.  Assume that $n$
is chosen sufficiently large so that1
$\partial _n \subset \B(\vec{0},R)$. Recall that we defined in (\ref{eq:cyl}) the vertical
solid cylinder $C(R)$ of radius $R>0$ about the $x_3$-axis.
Since the surfaces $M_n$ are converging away from $S$
to a minimal parking garage structure
of $\rth$ with associated foliation by horizontal planes, and $S$ does not intersect $\overline{\B}(\vec{0},4R)$,
then for $n$ large, the component $\Delta_n$ of $ M_n\cap C(R,R)$ that
contains $\partial_n$ (with the notation in (\ref{eq:cylcap})) must be a smooth compact
surface with its entire boundary in   $\partial C(R)$; furthermore,
every component of $\partial \Delta_n$ is a graph over
the circle  $\partial C(R)\cap \{x_3=0\}$ and has total curvature less than
 $3\pi$. As each $M_n$ is contained in a minimal
annulus, then the convex hull property implies that $\Delta_n$ is an
annulus. Since the total geodesic curvatures
of $\partial \Delta_n$ are uniformly bounded, the Gauss-Bonnet formula implies that
the total Gaussian curvatures of the compact annuli $\Delta_n$ are also uniformly bounded.
However, as the surface $M$ has infinite total curvature,
then the total Gaussian curvatures of the compact annuli $\Delta_n$
are not uniformly bounded. This contradiction proves that
the line $S$ passes through~$\vec{0}$.

Furthermore, this limit foliation ${\cal F}$ can be seen to
be independent of the sequence of positive numbers $\l_n$
(this comes from the fact that when one patches together
multivalued minimal graphs coming from different
scales, then the obtained surface is still a multivalued
graph over a fixed plane, see Part II in~\cite{cm21}
for more details). Hence, after a rotation of the initial
surface $M$ in $\R^3$, we may assume that the planes in ${\cal F}$
are horizontal. These arguments of patching together
multivalued minimal graphs coming from different scales allows us to
state the following result.

\begin{lemma}
\label{lema3.2}
There exist positive numbers $\de ,R_1$ and a solid vertical hyperboloid,
\begin{equation}
\label{eq:hyp}
{\cal H}={\cal H}(\de ,R_1)=\{ (x_1,x_2,x_3)\in \R^3\ | \ x_1^2+x_2^2<\de ^2x_3^2+R_1^2\}
\end{equation}
such that  $M-{\cal H}$ consists of
two multivalued graphs (the projection $(x_1,x_2,x_3)\mapsto (x_1,x_2)$ restricts
to a local diffeomorphism onto its image inside the $(x_1,x_2)$-plane $P$).
\end{lemma}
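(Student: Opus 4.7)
The plan is to prove Lemma~\ref{lema3.2} in two stages: first, that outside a suitable solid hyperboloid $M$ is locally a horizontal graph with small gradient, and second, that this local graph information assembles into exactly two multivalued graphs. The first stage is a blow-down contradiction using the parking garage convergence produced in the paragraphs preceding the lemma statement; the second stage combines the annular topology of $M$ with the single-column structure of the limit foliation ${\cal F}$.

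For the first stage I would fix $\de ,\ve _0>0$ and argue by contradiction. If no $R_1$ works, there are points $p_n\in M$ with $p_n\notin {\cal H}(\de ,n)$ at which $M$ fails to be locally a horizontal graph of gradient less than $\ve _0$. Since $x_1(p_n)^2+x_2(p_n)^2\geq n^2\to \infty $, we have $|p_n|\to \infty $. Setting $\l _n=1/|p_n|$ and $q_n=\l _np_n$, the rescaled points lie on the unit sphere; from the defining inequality $x_1(p_n)^2+x_2(p_n)^2\geq \de ^2x_3(p_n)^2+n^2$ one gets $x_1(q_n)^2+x_2(q_n)^2\geq \de ^2/(1+\de ^2)$, so the horizontal distance of $q_n$ from the $x_3$-axis $S$ is uniformly bounded below. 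Passing to a subsequence, $q_n\to q_\infty \in \R^3-S$. The blow-down convergence of $\l _nM$ to the horizontal foliation ${\cal F}$ away from $S$ already established is $C^\a$ and, by elliptic regularity for the minimal graph equation, upgrades to smooth convergence on compact subsets of $\R^3-S$. Consequently $\l _nM$ is eventually a horizontal graph with gradient $o(1)$ on a fixed ball around $q_\infty $; unscaling contradicts the failure at $p_n$, giving the desired $R_1$.

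For the second stage, once the local horizontal graph property holds on $M-{\cal H}$, the vertical projection $\pi (x_1,x_2,x_3)=(x_1,x_2)$ restricts to a local diffeomorphism onto its image in $D(\infty ,R_1)$, and lifting through the universal cover $\widetilde{D}(\infty ,R_1)$ decomposes the preimage into a disjoint union of honest graphs. To identify the number of such graphs as exactly two, I would exploit the double multivalued graph structure that each rescaled $\l _nM$ inherits from the single-column parking garage limit near $S$: the one-column condition supplies precisely two interlaced asymptotic sheets per turn around the axis, and embeddedness combined with the across-scales patching of multivalued graphs alluded to in the paragraphs just before the lemma propagates this labeling globally. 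The principal obstacle I anticipate is precisely this second stage: one must propagate the local two-sheet picture consistently across all scales $\l _n$, ensure that the two pieces $\Sigma _1,\Sigma _2$ remain disjoint globally and together exhaust $M-{\cal H}$, rule out extra components that could appear when patching, and verify that the two sheets do not merge under continuation around the annular end.
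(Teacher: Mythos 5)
Your first stage is sound and is essentially the argument the paper leaves implicit: Lemma~\ref{lema3.1} and Theorem~1.6 of \cite{mpr11} apply to an arbitrary sequence of scales, the limit foliation is independent of that sequence, the singular set is a single vertical line through the origin, and the $C^{\a}$ convergence of locally graphical minimal surfaces upgrades to smooth convergence by Schauder estimates; your computation that the rescaled points $q_n$ stay at horizontal distance at least $\de/\sqrt{1+\de^2}$ from $S$ is correct, so the blow-down contradiction does establish the parenthetical claim of the lemma, namely that the vertical projection is a local diffeomorphism on $M-{\cal H}$ for suitable $\de, R_1$.

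The gap is in your second stage, and it is more than the bookkeeping you describe. From the local diffeomorphism property alone you cannot ``lift through the universal cover'' to decompose $M-{\cal H}$ into graphs over $\widetilde{D}(\infty,R_1)$: the restriction of $\pi$ to $M-{\cal H}$ is not proper over $D(\infty,R_1)$, because $\partial{\cal H}$ itself projects onto all of $D(\infty,R_1)$ (the circle $\partial{\cal H}\cap\{x_3=t\}$ has radius $\sqrt{\de^2t^2+R_1^2}$, which sweeps out $[R_1,\infty)$ as $t$ varies), so a priori a component of $M-{\cal H}$ could be a bounded piece with boundary on $\partial{\cal H}$ rather than a full multivalued graph defined for all $r\geq R_1$ and all $\t$. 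The mechanism that rules this out, and that also yields the count of exactly two components, is the Colding--Minicozzi extension property for multivalued graphs inside embedded minimal disks (Part~II of \cite{cm21}, together with Corollary~14.3 of \cite{cm34}): once a $2$-valued graph forms near the axis at some scale, it extends radially and in $\t$ all the way to infinity with small gradient, and embeddedness plus the single-column parking garage structure then force $M-{\cal H}$ to consist of exactly the two resulting interleaved multivalued graphs. This is precisely the external input the paper invokes when it says the lemma follows from ``these arguments of patching together multivalued minimal graphs coming from different scales,'' and it is the step your proposal names as the principal obstacle without supplying: the $C^{\a}$ convergence at each individual scale $\l_n$ only controls the compact annular regions $\{1/2\leq\l_n|x|\leq 2\}$ and does not by itself propagate a sheet radially outward at a fixed height.
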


\begin{remark}
The arguments above use the uniqueness of the helicoid (when
applying Theorem~1.6 in~\cite{mpr11}). However, for a
simply connected surface $\h{M}$ without boundary (not an annulus
$M$ with compact boundary), both the convergence
of $\l _n\widehat{M}$ to a foliation of $\R ^3$
 by planes and the validity of Lemma~\ref{lema3.2} (after a rotation in $\R^3$) follow
directly from the main results in~\cite{cm23}, without reference to the discussion
in the previous paragraphs.
We make this remark here to warn the reader that we will be applying
the next proposition, which depends on Lemma~\ref{lema3.2}, in the simplified
proof of Theorem~\ref{t4.1} below on the uniqueness of the helicoid.
\end{remark}

By work of Colding and Minicozzi, any embedded minimal $k$-valued graph
with $k$ sufficiently large contains a $2$-valued subgraph which can be
approximated by the multivalued graph of a helicoid with an additional
logarithmic term. This approximation result, which appears as
Corollary~14.3 in Colding and Minicozzi~\cite{cm34}, together with
estimates for the decay of the separation between consecutive sheets
of a multivalued graph (equations (18.1) in~\cite{cm34} or (5.9)
in~\cite{cm22}, which in turn follow from Proposition~5.5
in~\cite{cm22}, and from Proposition II.2.12 and Corollary~II.3.7
in~\cite{cm21}), are sufficient to prove that, with the notation in Lemma~\ref{lema3.2},
each of the two
multivalued graphs $G_1$, $G_2$ in $M -{\cal H}$ contains
multivalued subgraphs $G_1'$, $G_2'$, respectively given by functions
$u_1(r,\t)$, $u_2(r,\t)$, such that $\frac{\partial
u_i}{\partial \t}(r, \t)>0$ for every $(r,\theta )$ and $i=1,2$
(resp. $\frac{\partial
u_i}{\partial \t}(r, \t)<0$ when the multivalued graphs
$G_1'$, $G_2'$ spiral downward in a counter clockwise manner); furthermore
the boundary curves of the subgraphs $G_1'$, $G_2'$ can be assumed
to lie on the boundary of another hyperboloid ${\cal H}'={\cal H}(\de ,R_2)$ for some
$R_2>R_1$. This
observation concerning $\frac{\partial u_i}{\partial \t}(r,
\t)>0$ was also made by Bernstein and Breiner using the same
arguments, see Proposition~3.3 in~\cite{bb1}. We refer the reader to
their paper for details.

With the above discussion in mind, we are ready to describe the main
result of this section.

\begin{proposition}
\label{p3.1}
After a rotation of $M$ so that the foliation
${\cal F}$ is horizontal, then:
\begin{enumerate}[1.]
\item There is a subend $M'$ of $M$ such that each horizontal plane
$P_t =x_3^{-1}(t)$ in ${\cal F}$ intersects $M'$ transversely in
either a proper curve at height $t$ or in two proper arcs,
each with its single end point on the boundary of $M'$.  More specifically,
there exists a compact solid cylinder $C(R,h)$ 
with $\partial M \subset {\rm Int}(C(R,h))$ whose top and bottom
disks each intersects $M$ transversely in a compact arc and whose
side $\{ x_1^2+x_2^2=R^2, |x_3|\leq h\} $ intersects $M$
transversely in two compact spiraling arcs $\b_1(\t)$, $\b_2(\t)$
with $\langle \frac{d\b_i}{d\t },(0,0,1)\rangle >0$, $i=1,2$ (after a possible replacement
of $M$ by $-M$), where $\t$ is the multivalued angle parameter over
the circle $\partial C(R,h) \cap P_0$,  and so that $M'= M-{\rm
Int}(C(R,h))$ is the desired subend.

\item The end $M'$ is conformally diffeomorphic to
$D (\infty ,1)$.

\item There exists $k \in \Z$ and a holomorphic function
$H$ on $D(\infty ,1 )$ such that the stereographically projected
Gauss map of $M'$ is $g(z)= z^k e^{H(z)}$ on $D(\infty ,1)$.
Furthermore, the height differential $dh=dx_3+idx_3^*$ of $M'$
extends meromorphically to $D(\infty ,1)\cup \{ \infty \} $ with a
double pole at $\infty $.
\end{enumerate}
\end{proposition}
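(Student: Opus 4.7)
The plan is to combine Lemma~\ref{lema3.2} and the Colding-Minicozzi monotonicity $\partial u_i/\partial\theta>0$ established in the paragraph just before the proposition, together with the holomorphic structure supplied by the height differential $dh$, and to handle the three parts in turn.

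After possibly replacing $M$ by $-M$, I assume $\partial u_i/\partial\theta>0$ on both multivalued subgraphs $G_1',G_2'$. For Part~1, I choose $R\geq R_2$ (so that $G_1',G_2'$ are defined on $\widetilde{D}(\infty,R)$) and $h$ large enough that $C(R,h)$ contains $\partial M$ in its interior. The subend $M'=M\setminus\mathrm{Int}(C(R,h))$ then inherits the multivalued graph structure of $G_1'\cup G_2'$ outside its compact boundary. Transversality of $M'$ with every horizontal plane $P_t$ follows from $\partial u_i/\partial\theta\neq 0\Rightarrow\nabla u_i\neq 0$ (the $\theta$-component of the Euclidean gradient of $u_i$ is $\tfrac{1}{r}\partial u_i/\partial\theta$), which simultaneously forces the Gauss map of $M'$ to omit $\{0,\infty\}$ and $dh$ to be nowhere vanishing on $M'$. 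The spiraling side arcs are the sections $\beta_i(\theta)=(R\cos\theta,R\sin\theta,u_i(R,\theta))$, which satisfy $\langle d\beta_i/d\theta,(0,0,1)\rangle=\partial u_i/\partial\theta>0$ and are compact because $|u_i(R,\theta)|\leq h$ restricts $\theta$ to a bounded interval; the top and bottom disk intersections are single compact arcs by the same monotonicity. A plane $P_t$ with $|t|\leq h$ meets $M'$ in the two level arcs $\{u_i=t,\,r\geq R\}$, each ending on its spiraling side arc, while for $|t|>h$ the plane avoids $C(R,h)$, so $P_t\cap M'=P_t\cap M$ is a single connected properly embedded spiral curve.

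For Part~2, $M'$ is a topological half-open annulus and hence is conformally either the punctured disk $D(\infty,1)$ or a finite-modulus annulus $\{r_0\leq|z|\leq 1\}$ with $r_0>0$. To rule out the latter, I would pass to the universal cover $\pi:\widetilde{M'}\to M'$ and consider the holomorphic developing map $w=\widetilde{x_3}+i\widetilde{x_3^*}:\widetilde{M'}\to\C$; since $dw=\pi^*dh$ is nowhere zero, $w$ is a local biholomorphism. Using the strict monotonicity of the lifted $u_i$ in $\theta$ together with the helicoidal asymptotics, one shows that $w$ is injective onto a half-plane in $\C$ and that the deck-transformation group acts on this half-plane by a single purely imaginary translation (since $x_3$ is single-valued on $M'$). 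The quotient is then conformally the punctured disk, so $M'\cong D(\infty,1)$. Alternatively, one may appeal to the parabolicity of properly embedded minimal surfaces with compact boundary in $\R^3$ to directly exclude the positive-modulus case; either route yields the required conformal identification, with the puncture placed at $z=\infty$.

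For Part~3, in the conformal coordinate $z$ on $D(\infty,1)$, the Gauss map $g:D(\infty,1)\to\C\setminus\{0,\infty\}$ has a well-defined winding number $k\in\Z$ around the puncture at $\infty$; therefore $g(z)z^{-k}$ is single-valued, holomorphic, and nowhere zero, and so equals $e^{H(z)}$ for some holomorphic $H$ on $D(\infty,1)$, giving $g(z)=z^{k}e^{H(z)}$. For the height differential, writing $dh=f(z)\,dz$, the helicoidal asymptotic structure (inherited from $dw=\pi^*dh$ being close to the standard differential on the half-plane) makes $f(z)$ bounded as $z\to\infty$ with a nonzero limit. Passing to the coordinate $\zeta=1/z$ at the puncture, $dh=-f(1/\zeta)\,\zeta^{-2}\,d\zeta$ exhibits a pole of order exactly two at $\infty$. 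The main obstacle is the conformal-type dichotomy in Part~2: converting the geometric monotonicity of the multivalued graphs into injectivity of the developing map (or, equivalently, invoking the correct parabolicity theorem) is the crux, while Parts~1 and~3 are essentially bookkeeping on top of the structural input already in hand.
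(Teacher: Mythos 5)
Your Part~1 is close to the paper's construction but omits two points that the paper needs. First, the radius must be chosen with $R\gg h$, so that the lateral boundary $\partial C(R)\cap \{|x_3|\leq h\}$ lies outside the hyperboloid ${\cal H}$ of Lemma~\ref{lema3.2}; only then is that piece of $M$ contained in the two multivalued graphs and the side intersection exactly the two spiraling arcs $\b_1,\b_2$. Second, for $|t|\geq h$ the plane $P_t$ passes through the interior of ${\cal H}$, where no graph structure is available, so transversality and connectedness of $P_t\cap M'$ do \emph{not} follow from $\partial u_i/\partial \t>0$; the paper obtains them from elementary Morse theory together with the convex hull property (which is also what rules out closed components in the top and bottom disks, and what guarantees the component structure making $M'$ an annulus).

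The serious gap is in Parts~2 and~3, which are the heart of the proposition. The paper does not prove them directly: having arranged that $M'$ meets some horizontal plane transversely in a connected proper curve, it invokes Corollary~1.2 of~\cite{mpe5}, which is precisely the statement that such a properly embedded minimal annular end is conformally $D(\infty,1)$ with $g=z^ke^{H}$ and $dh$ extending meromorphically with a double pole. Your substitute argument does not work as stated. The developing map $w=\widetilde{x_3}+i\widetilde{x_3^*}$ of the flat metric $|dh|^2$ on the universal cover is not injective onto a half-plane: already for the model case of the helicoid end ($dh=dz$ on $D(\infty,1)$) the period $\int_{\g}dh$ vanishes, so $w$ descends to $M'$, is $2\pi$-periodic on the universal cover, and has image $D(\infty,1)$ rather than a half-plane; the deck group does not act by a nonzero imaginary translation. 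Your fallback, ``parabolicity of properly embedded minimal surfaces with compact boundary in $\R^3$,'' is not an available theorem --- the known parabolicity results require the surface to lie in a half-space or a slab, and the general statement is open. Finally, for $dh$ you assert that the coefficient function is bounded with a nonzero limit at $\infty$ because of ``helicoidal asymptotics''; that is exactly what must be proved (it is part of the content of the cited corollary), not an input. The winding-number step giving $g=z^ke^{H}$ is fine once the conformal type is known, but the conformal type and the double pole require either the external Corollary~1.2 of~\cite{mpe5} or a genuine replacement for it, which the proposal does not supply.
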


\begin{proof}
We first construct the cylinder $C(R,h)$. Choose $R$ and $h$ large
enough so that $\partial M \subset \Int(C(R,h))$ and so that the
side $C(R,h) \cap \partial C(R)$
intersects the multivalued graphs $G_1'$, $G_2'$, transversely in two
compact spiraling arcs $\b_1(\t)$, $\b_2(\t)$ with $\langle \frac{d\b_i}{d\t },(0,0,1)\rangle >0$, 
$i=1,2$ (we may assume this sign of $\frac{d\b_i}{d\t
}$ after replacing $M$ by $-M$).
Note that by increasing the radius $R$ and height $h$, the sets
$M\cap C(R,h)$ define a compact exhaustion of $M$, since $M$ is
proper. These $R,h$ are taken so that $R\gg h$ in order to ensure
that  $C(R,h) \cap \partial C(R)$ is  disjoint from the hyperboloid
${\cal H}$ defined by (\ref{eq:hyp}), and $C(R,h)\cap \partial C(R)$
intersects the multivalued graphs $G_1'$, and $G_2'$ in
the above spiraling-type arcs $\b_1,\b _2$. Also take $R,h$ large enough so that the
component of $M\cap C(R,h)$ which contains $\partial M$, also
contains $\b _1,\b _2$ in its boundary. Without loss of generality,
we can assume that the top and bottom disks $D_h\subset P_h$,
$D_{-h}\subset P_{-h}$ of $C(R,h)$
each intersects $M$ transversely. The convex hull property implies that
there are no closed curve components in $(D_h\cup D_{-h})\cap M$, and thus
$(D_h\cup D_{-h})\cap M$ consists of
two compact arcs $\a_T\subset D_h$, $\a_B\subset D_{-h}$,
see Figure~\ref{figure1}.
\begin{figure}
\begin{center}
\includegraphics[height=6.4cm]{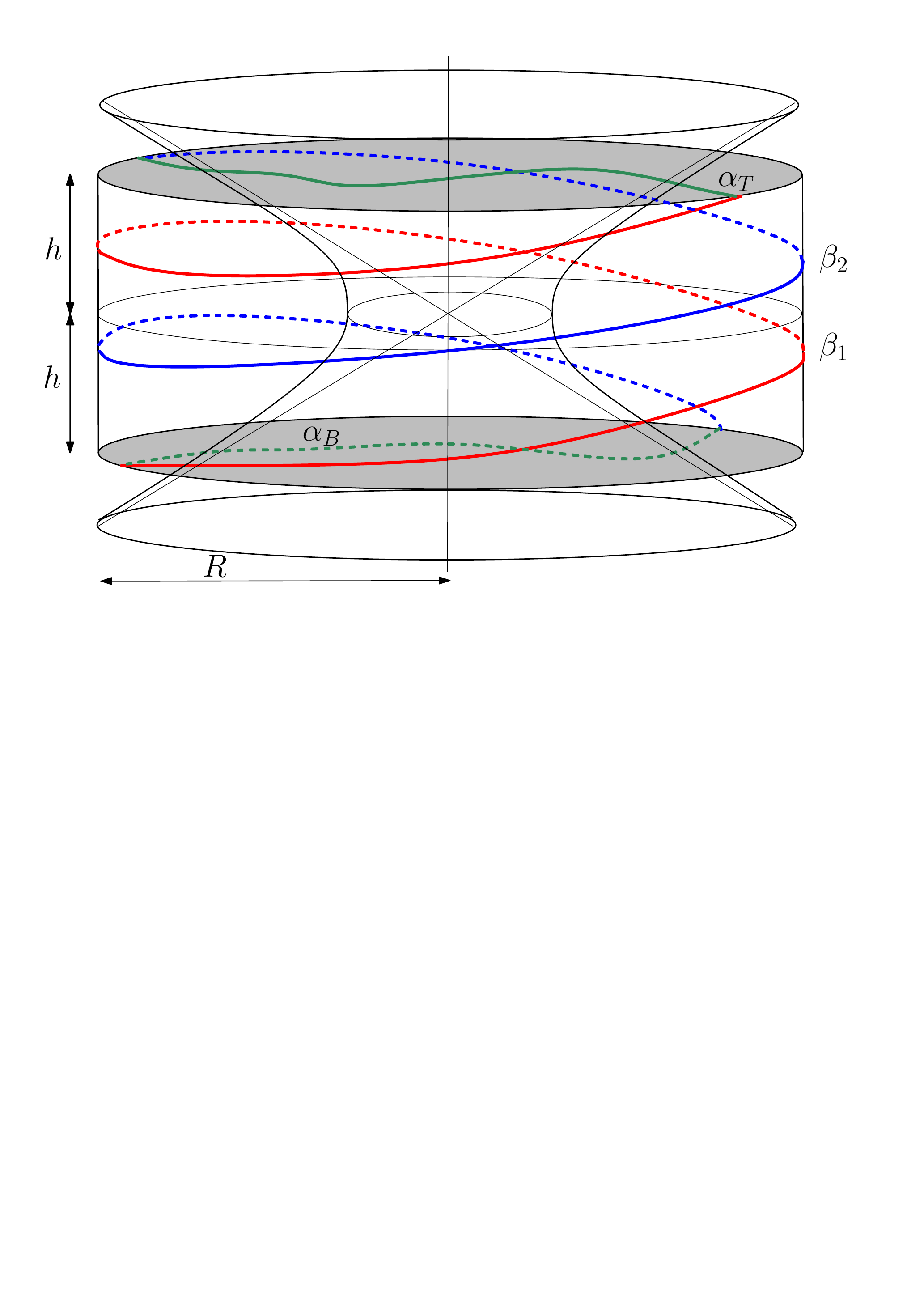}
\caption{$x_3$ is monotonic along the curves $\b_1,\b _2=
M'\cap [
\partial C(R)\cap \{ 0<x_3<h\} ]$ (red, blue), and constant along
the top and bottom arcs $\a_T,\a_B=M'\cap [\partial C(R,h)- \{ 0<x_3<h\} ]$
(green).
} \label{figure1}
\end{center}
\end{figure}
Hence, $M' =  M-\Int(C(R,h))$ is a annular subend of $M$.
Furthermore, each of the planes $P_{-h}$, $P_h$ intersects $M'$ in a
connected proper curve $\widetilde{\a}_B, \widetilde{\a}_T$
that contains $\a_B, \a_T$, respectively;
this is because each of $\widetilde{\a}_B, \widetilde{\a}_T$ intersects each of the
cylinders $\partial C(R')$, $R' \geq R$, at exactly two points.  By our
choices of $G_1'$, and $G_2'$, each plane $P_t$, $t \in (-h,h)$,
intersects $M'$ in a pair of proper arcs each with one end point in
$\partial M'$.  Since each of the planes $P_t $, $|t|\geq h$, intersects
each of $G_1'$, and $G_2'$ transversely in a single
proper arc with end point in the boundary of the respective
multivalued graph, then elementary Morse theory and the convex hull property
imply that each of these planes intersects $M'$ transversely in a
single proper curve. Since $M'$ intersects some horizontal plane in
a connected proper curve, then the remaining statements in the proposition follow directly from
Corollary~1.2 in~\cite{mpe5}.
\end{proof}

\section{Uniqueness of the helicoid.}
In this section we will apply Proposition~\ref{p3.1} to simplify the
original proof by Meeks and Rosenberg of the uniqueness of the
helicoid among properly embedded, simply connected, nonflat minimal
surfaces in $\R^3$. The proof we give below only uses the one-sided
curvature estimates by Colding and Minicozzi (Theorem~0.2
in~\cite{cm23}), Proposition~\ref{p3.1} in this paper, and some
arguments taken from the end of~\cite{mr8}. Since the uniqueness of
the helicoid given in~\cite{mr8} is not used in the proof by Colding
and Minicozzi that completeness implies properness for embedded,
finite topology minimal surfaces, then we can replace the hypothesis
of properness by the weaker one of completeness in the above
statement of uniqueness of the helicoid.
\begin{theorem} \label{t4.1}
If $M\subset \R^3$ is a complete, embedded, simply connected
minimal surface, then
$M$ is either a plane or a helicoid.
\end{theorem}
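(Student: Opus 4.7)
My plan is to follow the scheme of Meeks--Rosenberg \cite{mr8}, but to substitute Proposition~\ref{p3.1} for the more elaborate conformal and analytic arguments at the end of that paper. I first handle the case that $M$ has finite total curvature: then $M$ is conformally $\esf^2$ minus finitely many punctures, the simply connected hypothesis together with non-compactness forces $M\cong\C$ with exactly one end (and since a catenoidal end is not simply connected, the unique end must be planar), and a standard argument using the degree of the Gauss map forces $M$ to be a plane. From here on I assume $M$ has infinite total curvature. By \cite{cm35}, $M$ is properly embedded, and by the Remark following Lemma~\ref{lema3.2} (which applies \cite{cm23} directly in the simply connected, empty boundary setting), after a rotation of $\R^3$ the surface $M$ has, outside some solid hyperboloid, the two multivalued graph structure described in Lemma~\ref{lema3.2}.

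Since $M$ is simply connected and non-compact, it is diffeomorphic to $\R^2$, so removing a sufficiently large compact disk yields an annular subend $M'\subset M$; Proposition~\ref{p3.1} applied to $M'$ then gives that $M'$ is conformally $D(\infty,1)$, that its stereographically projected Gauss map has the form $g(z)=z^{k}e^{H(z)}$, and that $dh$ extends meromorphically to $D(\infty,1)\cup\{\infty\}$ with a double pole at infinity. Because $D(\infty,1)$ is conformally a punctured disk (infinite conformal modulus) whereas any annular subend of the unit disk $\D$ has finite modulus, uniformization forces $M$ to be conformally $\C$.

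Globally, $dh$ is a holomorphic $1$-form on $M\cong\C$ that extends meromorphically to $\widehat{\C}$ with its only pole a double pole at infinity; hence $dh=c\,dz$ for some nonzero constant $c$, and after a homothety I set $dh=dz$. Because $dh$ has no zeros on $\C$ and zeros of $dh$ correspond with multiplicity to the zeros and poles of $g$, the Gauss map $g$ has neither zeros nor poles on $\C$, so $g=e^{F(z)}$ for an entire function $F$; in particular the integer $k$ appearing in the local formula on the end equals zero.

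The main obstacle is to identify $F$ as the linear function $iz$. For this I would import the asymptotic arguments from the end of \cite{mr8}: embeddedness of $M$, combined with the fact that $M$ outside the hyperboloid consists of two multivalued graphs whose asymptotic vertical spacing is $\pi$ (a consequence of the parking garage limit analysis in Section~\ref{sec3}), forces $F(z)-iz\to 0$ as $z\to\infty$. Since $F(z)-iz$ is then an entire function that vanishes at infinity, Liouville's theorem gives $F(z)\equiv iz$. The Weierstrass data $(g,dh)=(e^{iz},dz)$ are those of a vertical helicoid, so $M$ is a helicoid.
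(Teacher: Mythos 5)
Your setup (conformal type $\C$ via Proposition~\ref{p3.1}, $dh=dz$ after a homothety, $g=e^{F(z)}$ with $F$ entire because $g$ omits $0,\infty$ and $M$ is simply connected) matches the paper's proof. The gap is in the final step. You assert that the two-multivalued-graph structure outside a hyperboloid, ``with asymptotic vertical spacing $\pi$,'' forces $F(z)-iz\to 0$ as $z\to\infty$, and then invoke Liouville. This is not a deduction but a restatement of the theorem: the entire difficulty is to control $F$ on the preimage of the hyperboloid (the strip $|{\rm Im}\,z|\le{\rm const.}$ in the conformal coordinate $z=x_3+ix_3^*$), where the Gauss map is far from vertical and the graph structure gives no information. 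Knowing that $g$ is close to $0$ or $\infty$ outside the hyperboloid only constrains ${\rm Re}\,F$ where $|{\rm Im}\,z|$ is large; it does not rule out $F$ being a polynomial of degree $\ge 2$ or having an essential singularity at $\infty$. Moreover, the asymptotic spacing $\pi$ is itself a \emph{conclusion} of the later analysis (item~\ref{it6} of Theorem~\ref{main}), not something available from Section~\ref{sec3}, where one only has the two-multivalued-graph structure with small gradient from \cite{cm23} and the sheet-separation estimates of \cite{cm34}.

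The paper closes exactly this gap with the Sard--Picard dichotomy and two blow-up arguments: choosing a latitude $\g\subset\esf^2$ near the equator so that $g^{-1}(\g)$ is a proper family of curves confined to the hyperboloid, Picard's theorem forces either infinitely many components (essential singularity, Case~A) or $m\ge 2$ components ($H$ a polynomial of degree $m\ge 2$, Case~B); each case is excluded by rescaling $M$ at points of large curvature high up the axis, identifying the limit (using the curvature formula and the arguments of Case~A to show the limit is a helicoid in Case~B), and then using the extension property of \cite{cm21} to bound the number of components of $g^{-1}(\g)$ crossing a high horizontal plane, contradicting the count coming from Picard. Some version of this (or of the original asymptotic analysis at the end of \cite{mr8}, which is itself substantial and not a one-line Liouville argument) is needed; as written, your proposal assumes the key asymptotic estimate rather than proving it. A secondary, minor point: in your finite total curvature preamble, the reason the end cannot be catenoidal is that the flux of a simply connected surface vanishes, not that ``a catenoidal end is not simply connected.''
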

\begin{proof}
Assume $M$ is not flat. By Proposition~\ref{p3.1}, the conformal
type of $M$ is $\C $ and its height differential $dh=dx_3+idx_3^*$
extends meromorphically to $\infty $ with a double pole. Therefore
$dh=\l \, dz$ for some $\l \in \C^*$, which means that
$h=x_3+ix_3^*$ gives a global conformal parametrization of $M$. In
the sequel, we will use this parametrization from $\C $ into $M$, so
that $h(z)=z$. Since $dh=dz$ has no zeros, we conclude that the
Gauss map $g$ misses $0,\infty $ on the entire surface $M$. Since $M$
is simply connected, then $g$ lifts through the natural exponential
map $e^w\colon \C \to \C ^*$ and thus, $g(z)=e^{H(z)}$ for some
entire function $H$.

The next arguments show that $H$ is a linear function of $z$, which
clearly implies that $M$ is a helicoid (recall that the only
associate surface\footnote{An associate surface to $M$ with
Weierstrass data $(g,dh)$ is any of the (in general, multivalued)
immersions with Weierstrass data $(g,e^{i\t }dh)$, where $\t \in
[0,2\pi )$.} to the helicoid which is embedded as a mapping is the
helicoid itself). Suppose $H$ is not a linear function and we will
obtain a contradiction. Applying Sard's theorem to the third
component $N_3$ of the spherical Gauss map of $M$, we deduce that
there exists a latitude $\g \subset \esf^2$ arbitrarily close to the
horizontal equator, such that $g^{-1}(\g )$ does not contain any
branch point of $N_3$. Therefore, $g^{-1}(\g )$ consists of a proper
collection of smooth curves. Note that there are no compact
components in $g^{-1}(\g )$ since $M$ is simply connected and $g$ misses
$0,\infty $. Also note that $H|_{g^{-1}(\g )}$ takes values along a certain
line $l\subset \C $ parallel to the imaginary axis, and the
restriction of $H$ to every component of $g^{-1}(\g )$ parameterizes
monotonically an arc in the line $l$. Hence, Picard's theorem
implies that either $H$ is a polynomial of degree $m\geq 2$ and $g^{-1}(\g )$
has $m$  components, or $g^{-1}(\g )$ has an infinite number of
components (equivalently, $H$ is entire with an essential
singularity at $\infty $).

The remainder of the proof breaks up into Cases A, B below. Each of
these cases uses  results of Colding and Minicozzi from~\cite{cm23}
on the geometry of $M$. By the one-sided curvature estimates of
Colding and Minicozzi~\cite{cm23}, there exists a solid vertical
hyperboloid of revolution ${\cal H}={\cal H}(\de ,R)\subset \R^3$
(defined by (\ref{eq:hyp})) such that $M-{\cal
H}$ consists of two infinite valued graphs whose norms of their gradients are
less than one. In particular, the curves in $g^{-1}(\g )$, when considered
to be in the surface $M\subset \R^3$, are contained in ${\cal H}$.
In Case B below, where  $H$ is a polynomial of degree  $m\geq 2$ and
$g^{-1}(\g )$ has $m$  components, we may assume that at least one of
the two ends of ${\cal H}$, say the top end, contains at least two
of the ends of curves in $g^{-1}(\g )$. In Case~A, where $g^{-1}(\g )$ is an infinite
proper collection of curves inside ${\cal H}$, then at least one of
the two ends of ${\cal H}$, say the top end, must contain an
infinite number of ends of curves in $g^{-1}(\g )$. Thus in Case~A for every
$m\in \N$, there exists a large $T=T(m)>0$ such that every plane $P_t=\{
x_3=t\} $ with $t>T(m)$ intersects at least $m$ of the curves in $g^{-1}(\g )$
.

\vspace{.3cm}
\noindent {\bf Case A:}  $g^{-1}(\g )$
has an  infinite number of components.
\par
\noindent
Using the rescaling argument in the proof of
Proposition~5.2 (page 754) of~\cite{mr8}, we can produce a blow-up
limit $\widetilde{M} = \lim_k \lambda_k (M-p_k)$ of $M$ that is a
properly embedded, simply connected minimal surface with infinite
total curvature and bounded Gaussian curvature; with some more detail,
the points $p_k\in M$ are points of nonzero Gaussian
curvature, the limit surface
$\widetilde{M}$ passes through the origin with this point as a maximum
of its absolute Gaussian curvature, $\lim_{k\to \infty} x_3(p_k)=\infty$
and the intersection of $\widetilde{M}$ with
$\{ x_3=0\} $ corresponds to the limit of scalings of the intersections of $M$ with
the sequence of planes $P_{t_k}$ with $t_k=x_3(p_k)\to \infty $ and
 $P_{t_k}$ intersects at least $k$ components of ${g}^{-1}(\g )$.
The new
surface $\widetilde{M}$ has the same appearance as that of $M$ with
two infinite  valued graphs. In particular, if
$\widetilde{g}$ is the Gauss map of $\widetilde{M}$, then
$\widetilde{g}^{-1}(\g )$ consists of a proper collection of curves which
is contained in the related solid hyperboloid $\widetilde{\cal H}$
for $\widetilde{M}$. In particular, there are only a finite number
of components of $\widetilde{g}^{-1}(\g )$ that intersect $\{
x_3=0\} $,  and  we may also assume that
$\g$ is chosen so that $\widetilde{g}^{-1}(\g )$
intersects  $\{x_3=0\} $ transversely in a finite number $n_0$ of points.
By the extension property in~\cite{cm21}, the related
multivalued graphs in the domains $p_k+\frac{1}{\lambda_k}\left[
\widetilde{M}\cap (\R^3-\widetilde{\cal H})\right] $ near $p_k$
extend with the norms of gradients at most 2 all the way to infinity in $\rth$ (we used a
similar argument in the sentence just before the statement of Lemma~\ref{lema3.2}).
Therefore, each of the infinitely many components of $g^{-1}(\g)$ must
intersect transversely the plane $\{ x_3=x_3(p_k)\} $ in the compact disk
$p_k+\frac{1}{\lambda_k}\left[ \{ x_3=0\} \cap \widetilde{\cal
H}\right] $ for $k$ sufficiently large. In particular,
since $\widetilde{g}^{-1}(\g )$ intersects $\{ x_3=0\} $ transversely in $n_0$ points,
we find that $g^{-1}(\g)$ must have at most $n_0$ components,
which is a contradiction. This contradiction implies that Case A
does not occur.

\vspace{.3cm}
\noindent {\bf Case B:} $H$ is a
polynomial of degree  $m\geq 2$ and  $g^{-1}(\g)$ has $m$  components.
\par
\noindent Again using the rescaling argument in the proof of
Proposition~5.2 of~\cite{mr8}, we  produce a simply connected
minimal surface $\widetilde{M} = \lim_k \lambda_k (M-p_k)$ with
infinite total curvature and absolute Gaussian curvature at most
1 and equal to 1 at $\vec{0}\in\wt{M}$, and with the
property that the intersection of $\widetilde{M}$ with $\{ x_3=0\} $
corresponds to the limit of intersections of $M$ with a sequence of
planes $P_{t_k}$ with $t_k=x_3(p_k)\to \infty $. As in Case~A above,
$\widetilde{M}$ has the same appearance as that of $M$ with two infinitely
valued graphs, and if $\widetilde{g}$ denotes
the Gauss map of $\widetilde{M}$, then $\widetilde{g}^{-1}(\g )$
consists of a finite number  of curves, each of which is contained
in the related solid hyperboloid $\widetilde{\cal H}$ for
$\widetilde{M}$. The same arguments as in the beginning of this
proof of Theorem~\ref{t4.1} demonstrate that the conformal type of
$\widetilde{M}$ is $\C $ and $\widetilde{M}$ can be conformally
parameterized in $\C $ with height differential $dz$ and Gauss map
$\widetilde{g}(z)=e^{\widetilde{H}(z)}$, for some entire function
$\widetilde{H}$.

We now show how the fact that $\widetilde{M}$ has bounded Gaussian
curvature gives that $\widetilde{H}$ is linear (and so,
$\widetilde{M}$ is a helicoid): Since the arguments in Case A above
can be applied to $\widetilde{M}$, we conclude that $\widetilde{H}$
cannot have an essential singularity at $z=\infty $ and thus,
$\widetilde{H}$ is a polynomial. Suppose that $\widetilde{H}$ has
degree strictly greater than one, and we will get a contradiction.
As $\widetilde{g}(z)$ is an entire function with an essential
singularity at $\infty $ and $\widetilde{g}$ misses $0,\infty $ (by
the open mapping property since $g$ also omits $0,\infty $), then
$\widetilde{g}$ takes any value in $\C -\{ 0\}$ infinitely often in
any punctured neighborhood of $z=\infty $. Thus, there exists a
sequence $z_k\in \C $ diverging to $\infty $ as $k\to \infty $, such
that $\widetilde{g}(z_k)=1$ for all $k$. Recall that the Gaussian
curvature $\widetilde{K}$ of $\widetilde{M}$ in terms of its
Weierstrass data $\left( \widetilde{g}(z)=e^{\widetilde{H}(z)}, dz
\right) $ is given by
\[
\widetilde{K}=-16|\widetilde{H}'|^2\frac{|\widetilde{g}|^4}{(1+|\wt{g}|^2)^4};
\]
hence $\widetilde{K}(z_k)=-|\widetilde{H}'(z_k)|^2$ tends to $-\infty $ as $k\to \infty $,
which is a contradiction. This contradiction proves that $\widetilde{H}$
is linear and so, $\widetilde{M}$ is a vertical helicoid.

Finally, the extension property in~\cite{cm21} implies that
the related horizontal multivalued graphs in the domains
$p_k+\frac{1}{\lambda_k}\left[ \widetilde{M}\cap (\R^3-\widetilde{\cal H})\right] $
near $p_k$ extend all the way to infinity as multivalued graphs with the norm of the gradient less than 2.
Reasoning as in the
proof of Case A, for $k$ large,
 ${g}^{-1}(\g )$  intersects transversely the plane $\{ x_3=x_3(p_k)\} $ in a subset of the compact disk
$p_k+\frac{1}{\lambda_k}\left[ \{ x_3=0\} \cap \widetilde{\cal
H}\right] $ that consists of a single point near $p_k$,
since on the helicoid $\wt{\cal{H}}$, \,$\wt{g}^{-1}(\g)$ is a helix
or a vertical line that intersects the
plane $\{x_3=0\}$ transversely in a single point.  But by  our choices,
the top end of $\cal{H}$ contains at least two ends of ${g}^{-1}(\g)$, each of which must intersect
transversely the plane $\{ x_3=x_3(p_k)\} $ for $k$ large.   This is a contradiction
 which completes the proof of Case~B, and so, it also finishes the proof of the theorem.
\end{proof}

The next result is an immediate consequence of Theorem~\ref{t4.1} and of
the discussion before the statement of Proposition~\ref{p3.1}; also see the
related rescaling argument in the proof of Proposition~5.2 of~\cite{mr8}.

\begin{corollary} \label{c.hel}
Let $M\subset \rth$ be a complete, embedded minimal annulus and
$X\colon D(R,\infty)\to \rth$ be a conformal parametrization of $M$
with Weierstrass data
\[
\left(
g(z)=z^ke^{H(z)},\ dh=dx_3+idx_3^*\right) ,
\]
where $H(z)$ is
holomorphic, $dh$ has no zeroes in $D(R,\infty )$ and $dh$ extends
holomorphically across $\infty$ with a double pole at $\infty$. Then, there exists
positive numbers $\de ,R$ and sequences $\{p_n^{+}\}_{n\in \N}$,
$\{p_{n}^{-}\}_{n\in \N}$ in $M\cap {\cal H}(\de ,R)$,
$\{\lambda_n^{+}\}_{n\in \N}$, $\{ \l_n^{-}\}_{n\in \N}\subset (0,\infty )$  such that:
\begin{enumerate}[1.]
 \item  $g(p_n^+)=g(p_n^-)=1\in \C$ \, for all $n\in \N$, and $ x_3(p_n^+)\to \infty$, \ $x_3(p^-_n)=-\infty$ as $n\to \infty $.
\item The surfaces $\l_n(M-p_n^{+})$ converge smoothly on compact
subsets of $\rth$ as $n\to \infty $ to a vertical helicoid ${\bf
H}\subset \rth$ which contains the $x_3$ and the $x_2$-axes.
\end{enumerate}
\end{corollary}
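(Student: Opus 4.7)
To construct the sequences $\{p_n^\pm\}$, I would first note that the meromorphic Gauss map $g(z)=z^ke^{H(z)}$ is nonconstant on $D(R,\infty)$ (since $M$ has infinite total curvature), so its level set $g^{-1}(1)$ is a discrete infinite subset accumulating at $\infty$, by Picard's theorem if $H$ has an essential singularity at $\infty$ and by an elementary argument-principle count in the polynomial case (or if $k\neq 0$). The multivalued-graph structure of $M\setminus\mathcal{H}(\delta,R_1)$ from Lemma~\ref{lema3.2} forces the unit normal to be nearly vertical outside $\mathcal{H}(\delta,R_1)$, so $|g|$ is close to $0$ or $\infty$ there; thus the points $p_j=X(z_j)$ with $g(z_j)=1$ eventually lie inside $\mathcal{H}(\delta,R_1)$. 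Since $M$ is proper and $\mathcal{H}(\delta,R_1)$ is bounded horizontally for bounded $x_3$, it follows that $|x_3(p_j)|\to\infty$. Writing $dh=(c+O(1/z))\,dz$ near $\infty$ with $c\neq 0$, the height satisfies $x_3(z)\sim\mathrm{Re}(cz)$, and the quantization $H(z)+k\log z\in 2\pi i\mathbb{Z}$ selects solutions distributed along finitely many asymptotic rays, some of which carry $\mathrm{Re}(cz)\to+\infty$ and some $\mathrm{Re}(cz)\to-\infty$; extracting subsequences yields $\{p_n^+\}$ with $x_3(p_n^+)\to+\infty$ and $\{p_n^-\}$ with $x_3(p_n^-)\to-\infty$.

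Next I would choose $\lambda_n^\pm=\sqrt{|K_M(p_n^\pm)|}$, so that the translated-rescaled surface $\lambda_n^\pm(M-p_n^\pm)$ has Gaussian curvature of absolute value exactly $1$ at the origin. The points $p_n^\pm$ lie on the curve $\{|g|=1\}\subset\mathcal{H}(\delta,R_1)$, which is the preimage of the axis of the asymptotic helicoid of $M$; on this curve $|K_M|$ is bounded above and bounded away from $0$, by the one-sided curvature estimates of Colding and Minicozzi together with the helicoid-like behavior of $M$ inside $\mathcal{H}(\delta,R_1)$ discussed prior to Proposition~\ref{p3.1}. Following the rescaling argument of Proposition~5.2 in~\cite{mr8} (already exploited in the proof of Theorem~\ref{t4.1}), the family $\{\lambda_n^+(M-p_n^+)\}_n$ enjoys uniform curvature bounds on every compact subset of $\mathbb{R}^3$; passing to a subsequence, it converges smoothly on compact subsets to a complete, embedded, nonflat minimal surface $\widetilde M$ with $|K_{\widetilde M}|(\vec 0)=1$.

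Finally I would identify $\widetilde M$ with the desired helicoid. Because $M$ is an annulus with compact boundary and $p_n^+$ diverges in $M$, for every compact $K\subset\mathbb{R}^3$ the corresponding subset of $\lambda_n^+(M-p_n^+)$ is a topological disk for $n$ large, so $\widetilde M$ is simply connected; Theorem~\ref{t4.1} then forces $\widetilde M$ to be a helicoid. The two multivalued graphs comprising $M\setminus\mathcal{H}(\delta,R_1)$ persist in the limit as horizontal multivalued graphs outside a vertical line, so $\widetilde M$ is a vertical helicoid. The Gauss map is preserved under translation-rescaling, hence $g_{\widetilde M}(\vec 0)=1$, so the unit normal at $\vec 0$ is $(1,0,0)$; this occurs on a vertical helicoid only along its axis, so that axis passes through $\vec 0$ and must be the $x_3$-axis, while the tangent plane at $\vec 0$ is the $x_2x_3$-plane, forcing $\widetilde M$ to contain both the $x_3$- and $x_2$-axes. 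The argument for $\{p_n^-,\lambda_n^-\}$ is identical. The main obstacle is the uniform curvature control on the rescaled family---both the two-sided bound on $|K_M(p_n^\pm)|$ (to keep $\lambda_n^\pm$ non-degenerate) and the bound of $|K|$ on large balls around $p_n^\pm$ (to pass to smooth limits)---which combines the Colding-Minicozzi one-sided estimates with the helicoid asymptotics of $M$ established in Section~\ref{sec3}.
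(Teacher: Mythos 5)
Your overall strategy --- blow up at points where $g=1$, use Theorem~\ref{t4.1} to identify the limit as a helicoid, and use the normalization $g(\vec 0)=1$ to place the $x_2$- and $x_3$-axes on it --- is the route the paper intends (it offers no detailed proof, only the citation of Theorem~\ref{t4.1}, the discussion preceding Proposition~\ref{p3.1}, and the rescaling argument of Proposition~5.2 of \cite{mr8}). Your final step, identifying the position of the limit helicoid from $g_{\widetilde M}(\vec 0)=1$, is correct. However, there is a genuine gap in the middle, and it is exactly the one you flag yourself: you prescribe the base points $p_n^\pm\in g^{-1}(1)$ \emph{first} and then assert that $|K_M|$ is bounded above and below on $\{|g|=1\}$ and that $\lambda_n^\pm(M-p_n^\pm)$ has locally uniform curvature bounds. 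At the stage where this corollary is invoked (in the proof of the assertion $k=0$ inside Theorem~\ref{ft}), $H$ may still be a high-degree polynomial or have an essential singularity at $\infty$, so $M$ is not known to have bounded curvature --- ruling those cases out is precisely the content of Cases A and B, which come \emph{after} the corollary is used. The point-picking argument of Proposition~5.2 of \cite{mr8} produces curvature bounds only because it is allowed to choose its own base points; it cannot be run at base points you have fixed in advance, so your "uniform curvature bounds on every compact subset" is unjustified as written.

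The repair is to reverse the order of your first two steps. Since $M$ fails to have quadratic decay of curvature in both ends of the hyperboloid $\mathcal H(\de,R_1)$ (the two multivalued graphs of Lemma~\ref{lema3.2} spiral infinitely both upward and downward, since every plane $P_t$ meets a subend of $M$ in a proper curve or arcs by Proposition~\ref{p3.1}, while finite spiraling in one direction would force finite total curvature there), one first runs the point-picking blow-up at divergent points $q_n$ with $x_3(q_n)\to+\infty$ (resp.\ $-\infty$), obtaining base points and scales for which the rescaled translated surfaces converge with bounded curvature to a complete, embedded, simply connected, nonflat limit; Theorem~\ref{t4.1} and the horizontality of the persisting multivalued graphs make this limit a vertical helicoid. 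On that helicoid $g=1$ at a discrete set of axis points, so by Hurwitz's theorem the smooth convergence produces exact solutions $p_n^\pm$ of $g=1$ at rescaled distance $O(1)$ from the blow-up base points; re-centering the same blow-up sequence at $p_n^\pm$ yields the statement, including $x_3(p_n^\pm)\to\pm\infty$. This also replaces your heuristic about solutions of $H(z)+k\log z\in 2\pi i\Z$ lying on "finitely many asymptotic rays," which does not obviously survive the essential-singularity case.
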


\section{The Weierstrass representation
of embedded minimal surfaces of finite topology.} In this section we
describe an improvement of the analytic description of a complete
injective minimal immersion of $\esf^1 \times[0,\infty)$ into $\rth$
that was given in Proposition~\ref{p3.1}.  This improvement is
given in the next theorem, which implies that $M$ has finite type.
Theorem~4 in~\cite{hkp1} states that a complete, embedded minimal
annular end of finite type with Weierstrass data as given in the
next theorem and with zero flux, is asymptotic to the end of a
vertical helicoid. We will prove this last fact again,
independently, later with the stronger result described in
Theorem~\ref{main}. Note that if $M$ has zero flux, then the number
$\l $ in item~2 below is zero and so, the next theorem will complete
the proof of Theorem~\ref{th1.1} except for item~3c; this
item will follow from items~\ref{it8}, \ref{it9} of Theorem~\ref{main}.

\begin{theorem}
\label{ft}
Suppose $M\subset \rth$ is a complete, injectively immersed minimal annulus
with compact boundary and infinite total curvature.
After a homothety and rotation, $M$ contains a subend $M'$ with the following
Weierstrass data:
\[
\left( g(z)=e^{iz+f(z)},\ dh=\left( 1+\frac{\l }{z-\mu }\right) dz\right) , \quad z\in D(\infty ,R),
\]
where $R>0$, $f\colon D(\infty ,R)\cup \{ \infty \} \to \C $ is
holomorphic with $f(\infty)=0$, $\l \in [0,\infty )$ and $\mu \in \C$.
\end{theorem}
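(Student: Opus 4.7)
The plan is to refine the output of Proposition~\ref{p3.1}. That proposition yields a subend conformally diffeomorphic to $D(\infty,1)$ with Gauss map $g(z)=z^ke^{H(z)}$ and holomorphic height differential $dh$ on $D(\infty,1)$ extending meromorphically to $\infty$ with a double pole. Writing $dh=\psi(z)\,dz$ with $\psi$ holomorphic on $D(\infty,1)\cup\{\infty\}$ and $\psi(\infty)\neq 0$, after a homothety of $\R^3$ I may assume $\psi(\infty)=1$, giving a Laurent expansion $\psi(z)=1+c_1/z+c_2/z^2+\cdots$ near $\infty$. The real-period condition $\mathrm{Re}\int_\gamma dh=0$ along the end generator $\gamma$ forces $c_1\in\R$; after reflecting through a horizontal plane if necessary, I may also assume $\lambda:=c_1\geq 0$.

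The heart of the proof is to eliminate the coefficients $c_2,c_3,\ldots$ by a conformal reparameterization of the subend. I seek $w=\phi(z)$ with $\phi(z)=z+O(1/z)$ near $\infty$, satisfying $\psi(z)\,dz=(1+\lambda/w)\,dw$. Integrating, this becomes the functional equation $\Phi(z)=\phi(z)+\lambda\log\phi(z)+\mathrm{const}$, where $\Phi(z):=\int^z\psi\,d\zeta=z+\lambda\log z+O(1/z)$. Writing $\phi(z)=z+\rho(z)$ with $\rho(\infty)=0$ reduces this to $\rho+\lambda\log(1+\rho/z)=O(1/z)$, which is solvable for $\rho$ as a convergent Laurent series in $1/z$ by an iteration argument on the Banach space of bounded holomorphic functions on $\{|z|\geq R\}$ vanishing at $\infty$, provided $R$ is chosen sufficiently large. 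A subsequent relabeling $z:=w-\mu$ produces $dh=\bigl(1+\lambda/(z-\mu)\bigr)\,dz$ on $D(\infty,R')$ for any prescribed $\mu\in\C$, with $R'$ chosen large enough that $|\mu|<R'$.

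Under the above change, the Gauss map takes the form $\wt g(w)=\phi^{-1}(w)^k e^{H(\phi^{-1}(w))}=w^k e^{\wt H(w)}$, where $\phi^{-1}(w)=w+O(1/w)$ and $\wt H$ is holomorphic on the new domain. To match the stated form, I need $k=0$ and $\wt H(w)=iw+f(w)$ with $f$ holomorphic across $\infty$; a final translation $w\mapsto w-i\alpha$ then normalizes $f(\infty)=0$. This last step uses Corollary~\ref{c.hel}: for a divergent sequence $p_n^+\in M'$ with $g(p_n^+)=1$, the rescaled surfaces $\lambda_n(M-p_n^+)$ converge smoothly to a vertical helicoid, whose standard Weierstrass representation has Gauss map $e^{i\zeta}$. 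The winding of $\wt g$ along $|w|=R$ equals $k$, which must match the vanishing winding of the helicoid's Gauss map at infinity, forcing $k=0$; smooth convergence to the helicoid similarly pins down the principal part of $\wt H$ at $\infty$ to be $iw$, so $f(w):=\wt H(w)-iw$ extends holomorphically across $\infty$.

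The main obstacle is the solvability and convergence of the functional equation for $\phi$ in the second paragraph. Once this is established, verifying that the new Weierstrass data defines the same minimal immersion is automatic (the period conditions are intrinsic to $M$), and matching the two normalizations reduces to bookkeeping with Laurent series. An alternative would be to invoke a normal-form theorem for meromorphic $1$-forms near a double pole: the only formal invariant modulo conformal changes fixing $\infty$ is the residue $\lambda$, so any such form is locally equivalent to $(1+\lambda/w)\,dw$.
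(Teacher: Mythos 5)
Your normalization of the height differential by solving $\psi(z)\,dz=(1+\lambda/w)\,dw$ for a conformal change $w=\phi(z)$ is in the spirit of what the paper does (it uses a change of variables $z\mapsto zT(z)$ to reduce $dh$ to $(A_0+\frac{\l}{w})\,dw$), and identifying $\l$ as real and nonnegative via the residue, the period condition and a $180^o$ rotation is fine. The genuine gaps are in your third paragraph, which is where all the difficulty of the theorem lives. First, the claim that $k=0$ because the winding of $\wt g$ on $|w|=R$ ``must match the vanishing winding of the helicoid's Gauss map at infinity'' is not an argument: the helicoids of Corollary~\ref{c.hel} arise as limits of rescalings $\lambda_n(M-p_n^+)$ based at a divergent sequence of points, so they control $g$ only locally near the $p_n^\pm$, whereas $k$ is the total variation of $\arg g$ around a homotopy generator of the annular end. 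The paper spends roughly a page on this: it builds an explicit loop $\G=\a_B^{r'}\cup\b_1\cup\a_T^{r'}\cup\b_2$ out of two nearly horizontal proper curves through helicoid-forming points $p^\pm$ (along which $\arg g$ stays trapped in an interval of length $\pi$, using the Colding--Minicozzi extension property and $\partial u_i^\pm/\partial\theta>0$) and two spiraling arcs on a cylinder (along which $\arg g$ increases by $2\pi n$ and then decreases by $2\pi n$), and only then concludes that the winding number vanishes.

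Second, the claim that smooth convergence to a helicoid ``pins down the principal part of $\wt H$ at $\infty$ to be $iw$'' assumes exactly what must be proved. In the paper's Case~B (where $H=P+f$ with $\deg P=m\ge 2$) the blow-up limit $\wt M$ \emph{is} shown to be a vertical helicoid; the contradiction does not come from the limit failing to be a helicoid, but from a separate counting argument: at least two ends of curves of $g^{-1}(\g)$ must cross the plane $\{x_3=x_3(p_k)\}$ inside a compact disk where the helicoid model allows only one transversal intersection point of $\wt g^{-1}(\g)$ with $\{x_3=0\}$. Likewise Case~A ($H$ with an essential singularity at $\infty$) is excluded by comparing the infinitely many components of $g^{-1}(\g)$ with the finite number $n_0$ of transversal intersections of $\wt g^{-1}(\g)$ with $\{x_3=0\}$, using Sard's theorem, Picard's theorem and the extension property of multivalued graphs. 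Without reproducing these two exclusions (or explicitly importing them from the proof of Theorem~\ref{t4.1}, which is what the paper does), your argument does not establish that $H(z)=cz+d+f(z)$ with $f(\infty)=0$, and the theorem does not follow.
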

\begin{proof}
After a rotation of $M$ and a replacement by a
subend, we may assume by Proposition~\ref{p3.1} that
$M$ is the image of a conformal minimal embedding $X\colon D(\infty,1)\to \rth$,
the meromorphic Gauss map of $X$ is equal to $g(z)=z^ke^{H(z)}$ where
$k\in \Z$ and $H\colon D(\infty,1)\to \C$ is some holomorphic function, and
the holomorphic height differential of $X$ is the one-form $dh=dx_3+idx_3^*$
which has no zeroes and which extends meromorphically to $D(\infty ,1)\cup\{\infty\}$
with a double pole at infinity.

\begin{assertion} In the situation above, $k=0$. 
\end{assertion}
\begin{proof} Since $g(z)=z^ke^{H(z)}$, then $k$ is  the
winding number of $g|_{\partial D(\infty, 1)}$ in $\C-\{0\}$.
We will show that the winding number of $g|_{\partial D(\infty,1)}$ is zero
by proving that there exists a simple closed curve $\G\subset D(\infty,1)$ that is the
boundary of an end representative of $D(\infty, 1)$ and such that
$g|_{\G}$ has winding number $0$; since such a simple
closed curve $\G$ is homotopic to $\partial D(\infty, 1)$, it will follow
that the winding number of $g|_{\partial D(\infty, 1)}$  also vanishes.

By Proposition~\ref{p3.1}, there exist $R',h'>0$ such that $M'=M-\mbox{Int}(C(R',h'))$
is a subend of $M$, each horizontal plane $P_t=\{ x_3=t\} $ intersects $M'$
transversely in either a proper curve (if $|t|\geq h'$) or in two proper
arcs, each with its single end point on the boundary of $M'$ (if $|t|<h'$).
By Corollary~\ref{c.hel}, there exist $\de, R_1>0$
and a point $p^-\in M\cap {\cal H}(\de ,R_1)$
with $x_3(p^-)\ll -1$ such that $g(p^-)=1$ and
$M$ has the appearance of a vertical
helicoid around $p^-$. By the extension property in~\cite{cm21}, the
two resulting multivalued graphs in $M$ around $p^-$ extend indefinitely
sideways  as very flat multivalued graphs. Consider the intersection of $M$
with the tangent plane $T_{p^-}M$. This intersection is an analytic
1-dimensional complex which contains an almost horizontal proper smooth
 curve $\a_B$ passing through $p^-$, which we may assume is globally parameterized
 by its $x_2$-coordinate. In this case, $\a_B$ near $p^-$ plays the role of the $x_2$-axis
contained  in the helicoid ${\bf H}$ that appears in
Corollary~\ref{c.hel}. It follows that close to $p^-$, the argument
of $g(z)$ on $\a_B$ can be chosen to be close to zero, see
Figure~\ref{figp-}.
\begin{figure}
\begin{center}
\includegraphics[height=5.5cm]{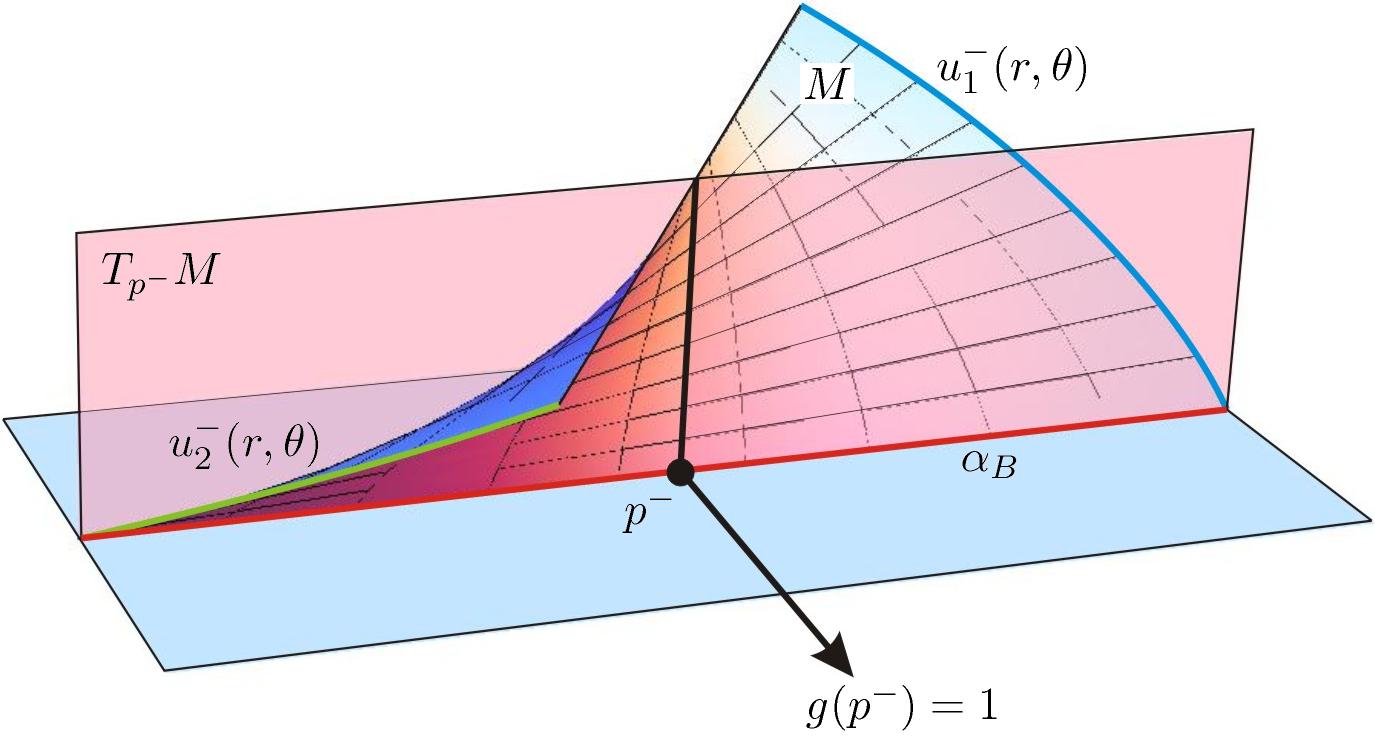}
\caption{Around $p^-$, $M$ has the appearance of a translated and
homothetically shrunk vertical helicoid, made of two multivalued graphs
$u_i^-(r,\theta )$, $i=1,2$.}
\label{figp-}
\end{center}
\end{figure}

Since the two horizontal multivalued graphs  around $p^-$ extend
horizontally all to way to infinity in $\R^3$, our earlier
discussion before the statement of Proposition~\ref{p3.1} implies
that  the related multivalued graphing functions $u_1^-,u_2^-$ with polar
coordinates centered at $p^-$, satisfy $\frac{\partial
u_i^-}{\partial \theta}>0$ for $i=1,2$ when restricted to $\a_B -
\{p^- \}$, even at points far from the forming helicoid at $p^-$.
Thus, if we consider the argument of the complex number $g(p^-)$ to
lie in $(-\frac{\pi }{2},\frac{\pi }{2})$, we conclude that the
argument of  $g(\a_B(t))$ also lies in $(-\frac{\pi }{2},\frac{\pi
}{2})$ for all $t\in \R $.

A similar discussion shows that there is
a point $p^+\in M\cap {\cal H}(\de ,R_1)$ with $x_3(p^+)\gg 1$  and
a related almost horizontal curve $\a_T\subset M\cap T_{p^{+}}M$
passing through $p^+$ and such that the argument of $g$ along
$\a_{T}$ also takes values in the interval $(2\pi j-\frac{\pi }{2},
2\pi j+\frac{\pi }{2})$, when we consider the
argument of $g(p^+)=1$ to lie in the interval $(2\pi j-\frac{\pi
}{2}, 2\pi j+\frac{\pi }{2})$ for some $j\in \N$.

Recall from the discussion just before Proposition~\ref{p3.1} that
the hyperboloid ${\cal H}'={\cal H}(\de ,R_2)$
was chosen so that in the complement of
 ${\cal H}'$,
$M$ consists of two multivalued graphs $G_1'$, $G_2'$, respectively given
by functions $u_1(r,\theta)$, $u_2(r,\theta)$, such that
$\frac{\partial u_i}{\partial \theta}(r,\theta )>0$, $i=1,2$. Now
can choose $r'$ sufficiently large so that:
\begin{enumerate}[$\bullet$]
\item The portion of $\a_T\cup\a_B$
outside of the solid vertical cylinder $C(r')=\{ x_1^2+x_2^2\leq (r')^2\}$
consists of four connected arcs in $G_1' \cup G_2'$, and the portion of
$\a_T\cup\a_B$ inside $C(r')$ consists of two compact arcs $\a_T^{r'}$,
$\a_B^{r'}$.
\item $\partial C(r')\cap (G_1'\cup G_2')$ contains
two connected spiraling arcs $\beta_1$, $\beta_2$,
each of which connects
one end point of $\a_T^{r'}$ with one end point of $\a_B^{r'}$.
\end{enumerate}

We define the simple closed curve $\G=\a_B^{r'}\cup \beta_1 \cup \a_T^{r'}\cup \beta_2
\subset M$, see Figure~\ref{figure5}. Note that $\G $ bounds a subend of $M$,
and that $\G$ can be parameterized by
a mapping $\g\colon [0,4]\to \G$, so that
\begin{enumerate}[$\bullet$]
\item $\a_{B}^{r'}$ has end points $\g(0), \g(1)$,
\item $\g(1),\g(2)$ are the end points of $\beta_1$,
\item $\g(2),\g(3)$ are the end  points of $\a_T^{r'}$, and
\item $\beta_2$ has end points $\g(3), \g(4)=\g (0)$, see Figure~\ref{figure5} left.
\end{enumerate}
\begin{figure}
\begin{center}
\includegraphics[height=5cm]{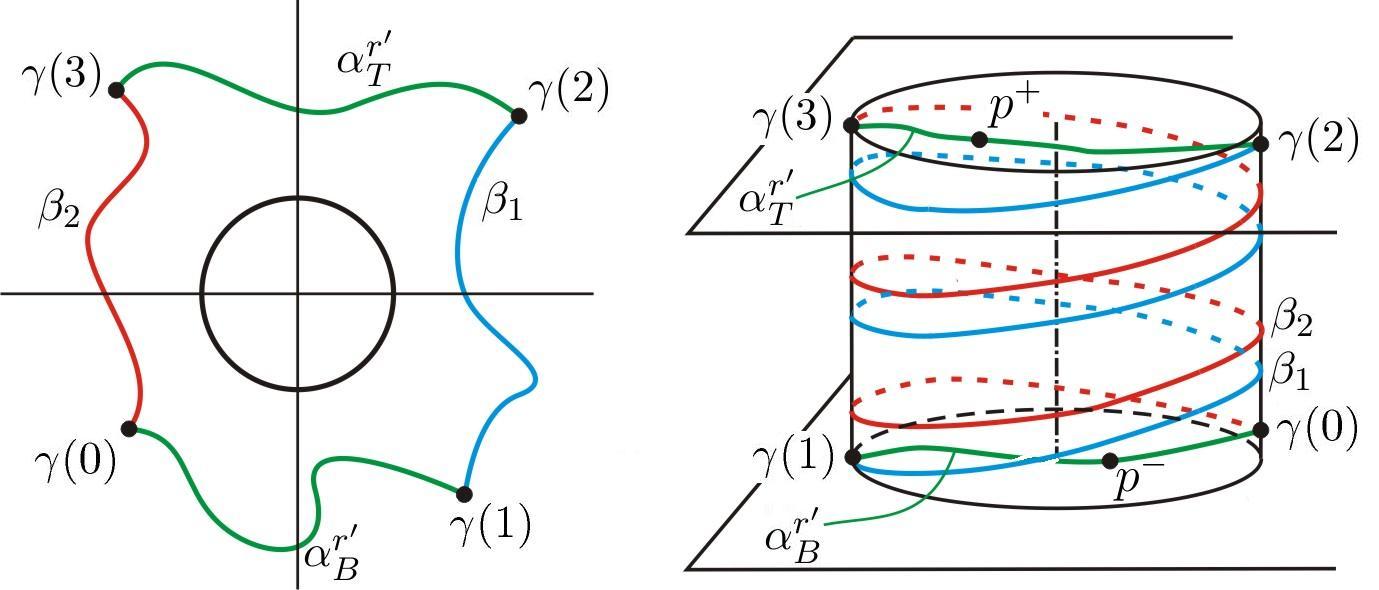}
\caption{Left: The closed curve $\G=\a_B^{r'}\cup \beta_1 \cup
\a_T^{r'}\cup \beta_2$ is homotopic to the boundary of $D(\infty
,1)$. Right: The same loop $\G $, viewed in $M\subset \R^3$;
note that $\a_B^{r'},\a_T^{r'}$ are contained in vertical
planes parallel to the $(x_2,x_3)$-plane ($\a_B^{r'}$, $\a_T^{r'}$ are not necessarily
contained in horizontal planes).}
\label{figure5}
\end{center}
\end{figure}
Notice that the condition
$\frac{\partial u_i}{\partial \theta}(r,\theta)>0$
along $\beta_1 \cup \beta_2$
implies that the argument of $g(z)$
along $\beta_1$, which we can choose to have an initial value
at $\g (1)=\beta_1 \cap \a_B^{r'}$ in the interval $(-\frac{\pi }{2},\frac{\pi }{2})$,
has its value at $\gamma (2)$ in $(2\pi n-\frac{\pi }{2}, 2\pi n+\frac{\pi }{2})$,
where $n$ is the number of $\theta$-revolutions that $\beta _1$
makes around the $x_3$-axis (when we assume that the Gauss map of $M$ is
upward pointing along $\beta_1$). Similarly, the argument of $g(z)$ along
$\beta_2$, which has an initial value in $(2\pi n-\frac{\pi }{2} , 2\pi n+\frac{\pi }{2})$, has its
ending value in $(-\frac{\pi }{2},\frac{\pi }{2})$; we are using here the earlier
observation that if the argument of $g(z)$ at one end point
of $\a^T$ or $\a^B$ lies in $(2\pi m-\frac{\pi }{2}, 2\pi m+\frac{\pi }{2})$
for some $m\in \N$, then the other end point of the same
$\a $-curve also lies in $(2\pi m-\frac{\pi }{2}, 2\pi m+\frac{\pi }{2})$. It follows
now that the winding number of $g|_{\G}$ is zero, which
completes the proof of the assertion.
\end{proof}

We continue with the proof of Theorem~\ref{ft}.
So far we have deduced that $M$ can be parameterized by  $D(\infty,1)$
with Weierstrass data $g(z)=e^{H(z)}$, $H(z)$ holomorphic on
$D(\infty, 1)$ and $dh=dx_3+idx_3^*$, where $dh$ has no zeroes and
extends to $D(\infty, 1)\cup\{\infty\}$ with a double pole at
infinity. We  claim that $H(z)=P(z)+f(z)$, where $P(z)$ is a
polynomial of degree one and $f(z)$ extends to a holomorphic
function on $D(\infty, 1)\cup\{\infty\}$ with $f(\infty)=0$. The
first case we consider, Case~A, is when $H(z)$ has an essential
singularity at $\infty$. This case cannot occur by the same arguments
as those given in the proof of Theorem~\ref{t4.1} in the similar
Case~A. The second case we consider, Case~B, is when
$H(z)=P(z)+f(z)$ where $P(z)$ is a polynomial of degree $m\geq 2$
and $f(\infty)=0$. In this case the arguments of the related Case~B
in the proof of Theorem~\ref{t4.1} show that this case cannot occur either.
This proves the claim that $H(z)=cz+d+f(z)$ where
$c,d\in \C $ and $f(\infty)=0$. Since $M$ is assumed to have infinite total
curvature, its Gauss map takes  on almost all values
of $\esf^2$ infinitely often,  which means that $c\not =0$.

Finally we will obtain the Weierstrass data that appears in the
statement of Theorem~\ref{ft}. Since $dh$ has a double pole at
infinity, then after a change of variables of the type $z\mapsto
zT(z)$ for a suitable holomorphic function $T(z)$ in a neighborhood
of $\infty $ with $T(\infty )\in \C -\{ 0\} $, we can write  $dh=(
A_0+\frac{\l }{w})\, dw$, where $A_0\in \C -\{ 0\} $ and $-2\pi \l
\in \R $ is the vertical component of the flux vector of $M$ along its
boundary. After possibly rotating $M$ in $\R^3$ by 180$^o$ about a
horizontal line, we can assume $\l \geq 0$.
After a homothety of $M$ in $\R^3$ and a suitable rotation
$w\mapsto e^{i\theta }w$ in the parameter domain, we can prescribe
$A_0$ in $\C -\{ 0\} $. As a rotation of $M$ in $\R^3$ about the
$x_3$-axis does not change $dh$ but multiplies $g$ by a unitary
complex number, then we can assume that the Weierstrass data of $M$
are $g(w)=e^{cw+d+f(w)}$ and $dh=(-ic+\frac{\l }{w})\, dw$. Finally,
the linear change of variables $cw+d=i\xi $ produces $g(\xi
)=e^{i\xi +f_1(\xi )}$, $dh=( 1+\frac{\l }{\xi +id})\, d\xi $ where
$f_1(\xi )=f(w)$. After choosing $R>0$ sufficiently large, we have
the desired Weierstrass data on $D(\infty ,R)$. This concludes the
proof of the theorem.
\end{proof}

\section{The analytic construction of the
examples $E^{a,b}$.}
\label{sec6}
 In this section we will construct certain
examples $E^{a,b}$ of complete {\it immersed} minimal annuli which
depend on parameters $a,b\in [0,\infty )$ and so that their flux vector
is $(a,0,-b)$.  The
{\it embedded} canonical examples $E_{a,b}$ referred to in
Theorem~\ref{thm1.3} will be end representatives of the corresponding examples $E^{a,b}$;
this is explained in item~\ref{it1} in Theorem~\ref{main} below.
Our construction of the examples discussed in this section will be based on the
classical Weierstrass representation.  We first review how we can deduce  the
periods and fluxes of a potential minimal annulus from its Weierstrass representation.

Consider a meromorphic function $g$ and a holomorphic one-form $dh$
on the punctured disk $D(\infty, R)$
centered at $\infty $, for some $R>0$. Suppose that
$(|g|+|g|^{-1})|dh|$ has no zeros in $D(\infty , R)$.
The period and flux along
$\partial _R=\{|z|=R\}$ of the (possibly multivalued)
unbranched minimal immersion $X\colon D(\infty, R)\to \rth$
associated to the Weierstrass data
$(g,dh)$ are given by
\[
\mbox{Per}+i \mbox{ Flux}=\frac{1}{2}\left(
\int _{\partial _R}\frac{dh}{g}-
\int _{\partial _R}g\, dh, i\int _{\partial _R}\frac{dh}{g}+i
\int _{\partial _R}g\, dh, 0\right) +
\left( 0,0,\int _{\partial _R}dh\right) \in \C^3.
\]
If we assume that the period of $X$ vanishes, i.e.
(\ref{eq:periodproblem}) holds, then the above formula
gives
\begin{equation}
\label{eq:flux}
\mbox{ Flux}=\left(
-\mbox{Im}\int _{\partial _R}g\, dh, \mbox{Re} \int _{\partial _R}g\, dh,
\mbox{Im} \int _{\partial _R}dh\right)
\equiv \left( i\int _{\partial _R}g\, dh,
\mbox{Im} \int _{\partial _R}dh\right) \in \C \times \R ,
\end{equation}
where we have identified $\R^3$ with
$\C \times \R $ by $(a_1,a_2,a_3)\equiv (a_1+ia_2,a_3)$.

It is clear that the choice $g(z)=e^{iz}$, $dh=dz$ produces the
end of a vertical helicoid.
In this section, we will consider
explicit expressions for $(g,dh)$ such that
\begin{enumerate}[1.]
\item $g(z)=e^{iz+f(z)}$, where $f\colon D(\infty ,R)\cup \{ \infty \} \to \C $ is
holomorphic and $f(\infty)=0$.
\item $dh=(1+\frac{\l }{z-\mu })\, dz$, where $\l \in [0,\infty )$ and $\mu \in \C$,
\end{enumerate}
in terms of certain parameters defining $f(z)$ and $\mu$ and we
will prove that these parameters can be adjusted so that the period
problem for $(g,dh)$ is solved, thereby defining a complete immersed
minimal annulus $E^{a,b}$; furthermore, the flux vector of the
resulting minimal immersion is $F=(a,0,-b)$ and this vector
can be chosen for every $a,b\geq 0$. In the particular case
$a=b=0$, we choose $f=\l =0$; hence $E^{0,0}$ is the end of a
vertical helicoid.
The argument for the construction of the remaining annuli $E^{a,b}$
breaks up into two cases, depending on whether the flux vector $F$
is vertical or not.
\par
\vspace{.2cm} \noindent
 {\bf Case 1: The flux is not vertical.}
\par
Consider the following particular choice of $g,dh$:
\begin{equation}
\label{eq:gdh1}
g(z)=t\, e^{iz}\frac{z-A}{z}, \qquad dh=\left( 1+
 \frac{B}{z}\right) dz, \qquad z\in D(\infty ,R),
\end{equation}
where $t>0$, $A\in \C-\{ 0\} $, $B\in [0,\infty )$ and $R>|A|$ are to be
determined.  Note that $g$ can rewritten as $g(z)=t\, e^{iz+f(z)}$
where $f(z)=\log\frac{z-A}{z}$, which is univalued and holomorphic
in $D(\infty ,R)$ since $R>|A|$. Furthermore,  $f(\infty )=0$. Also
note that $ \int _{\partial _R}dh=-2\pi i\, B$
 is purely imaginary since $B\in \R $ (here
$\partial _R$ is oriented as boundary of $D(\infty ,R)$). Therefore,
 the period problem for $(g,dh)$ given by equation~(\ref{eq:periodproblem})
 reduces to the horizontal component.
To study this horizontal period problem, we first compute
$\int_{\partial _R}g\, dh$ as the residue of a meromorphic one-form
in $\{ |z|\leq R\} $, obtaining
    \begin{equation}
\label{eq:pergdh}
\int_{\partial _R}g\, dh=2\pi i t \left(
A-B+i AB\right).
\end{equation}
Arguing analogously with $\int _{\partial _R}\frac{dh}{g}$ we have
\begin{equation}
\label{eq:per1/gdh}
\int_{\partial _R}\frac{dh}{g}=-2\pi i\frac{A+B}{te^{iA}}.
\end{equation}
Hence the period condition (\ref{eq:periodproblem}) reduces to the equation
\begin{equation}
\label{eq:per}
t \left[ A(1+i\, B)-B\right] =\frac{\overline{A}+B}{t}e^{i\, \overline{A}}.
\end{equation}

Next we will show that the period problem (\ref{eq:per}) can be
solved in the parameters $t,A,B$ with arbitrary values of the flux
vector different from vertical (the case of vertical nonzero flux
will be treated in Case~2 below, with a different choice of $g,dh$).

Let $A=x+iy$ with $x,y\in \R $, and rewrite
equation~(\ref{eq:per}) in the following two ways:
\begin{equation}
\label{e1}
t^2e^{-y}[A(1+i B)-B]=(\overline{A}+B)e^{ix}
\end{equation}
\begin{equation}
\label{e2}
 t^2e^{-y}[(x-By-B)+i(Bx+y)]=[(x+B)-iy]e^{ix}.
\end{equation}
We fix $B\geq 0$ 
and for $y\in \R$, define the
following related $\C$-valued curves:
\begin{equation}
\label{eq:L(x)R(x)}
L(x)=(x-By-B)+i(Bx+y),\qquad R(x)=[(x+B)-iy]e^{ix},
\end{equation}
that contain the information on the arguments of the complex numbers
of the left- and right-hand sides of equation~(\ref{e2}), provided
they are nonzero.
Observe that these curves depend on the parameter $y$.

Also note that
\begin{equation}
\label{eq:|L|}
|L(x)|^2=(1+B^2)(x^2+y^2)+B^2-2Bx+2B^2y.
\end{equation}
As $(1+B^2)y^2+2B^2y\geq -\frac{B^4}{1+B^2}$ for all $y\in \R$, then
(\ref{eq:|L|}) gives
\begin{equation}
\label{eq:|L|a}
|L(x)|^2\geq (1+B^2)x^2+B^2-2Bx-\frac{B^4}{1+B^2}=(1+B^2)
\left( x-\frac{B}{1+B^2}\right) ^2.
\end{equation}
Since our goal is to prove that both $t>0$, $A\in \C -\{ 0\} $ can be
chosen so that the period problem (\ref{eq:per}) is solved for a
given $B\in \R$, then in the sequel we will fix $B\geq 0$
and take $x=\mbox{Re}(A)$ larger 
than some particular values; namely,
\begin{equation}
\label{eqx}
 x>\frac{B}{1+B^2}.
\end{equation}
Thus, (\ref{eq:|L|a}) gives that $L(x)\neq 0$ when $x$ satisfies (\ref{eqx}), and so, we can write $L(x)$
in polar coordinates as
\[
L(x)=r_L(x)e^{i\theta_L(x)},
\]
where $r_L(x)$ is the modulus of $L(x)$ and $\theta_L (x)$ is its argument,
taking values in the interval $(-\pi,\pi ]$. 
Note that 
$L(x)$ cannot lie in the closed third quadrant for any value of $x>\frac{B}{1+B^2}$
(if $B=0$, then $L(x)=x+iy$ with $x>0$ and this property is clear; while if
$B>0$, then either $L(x)$ has positive real part in which case $L(x)$ cannot
lie in the closed third quadrant, or $x-By-B\leq 0$ and in this case $Bx+y\geq
Bx+\frac{1}{B}(x-B)=\frac{B^2+1}{B}x-1>0$ so $L(x)$ also does not lie in
the third quadrant), i.e.
 $\theta _L(x)\in (-\frac{\pi }{2},\pi )$ for all $x>\frac{B}{1+B^2}$. 

Viewing $L(x)$ as a curve in $\R^2$ with parameter
$x$ satisfying (\ref{eqx}), we have:
\begin{equation}
\label{|thL'|}
\left|\frac{d \theta_L}{dx}(x)\right|\leq \frac{|L'(x)|}{|L(x)|}\stackrel{(\ref{eq:L(x)R(x)})}{=}\frac{|1+iB|}{|L(x)|}
\stackrel{(\ref{eq:|L|a})}{\leq}
\left| x-\frac{B}{1+B^2}\right| ^{-1}.
\end{equation}

On the other hand, note that $R(x)=0$ if and only if $x=-B$ and
$y=0$. Since we are assuming that $x$ satisfies (\ref{eqx}),
then $R(x)$ cannot vanish so we can we write
\[
R(x)=r_R(x)e^{i\theta_R(x)}
\]
 in polar coordinates, where
$\theta_R(x)=x+\theta_1(x)$ and $\theta_1(x)=\arg [(x+B)-iy]$, which
 lies in the interval $\left(-\frac{\pi}{2}, \frac{\pi}{2}\right)$.
 Therefore,
\begin{equation}
\label{thetaR'}
\frac{d\theta_R}{dx}(x)=
1+\frac{d \theta_1}{d x}(x)\geq 1+\min _{y\in \R }\frac{y}{(B+x)^2+y^2}=
1-\frac{1}{2(B+x)}.
\end{equation}

Comparing the expression $m=m(B,x)=1-\frac{1}{2(B+x)}$ in the
right-hand side of (\ref{|thL'|}) with the one at the right-hand side of
(\ref{thetaR'}), we deduce directly the following property.
\begin{assertion}
\label{ass6.1} Given $B\in [0,\infty )$, there exists
$x_0=x_0(B)$ satisfying (\ref{eqx}) such that
such that if $x\in (x_0,\infty )$, 
then $\frac{d\t _L}{dx}(x)
<\frac{d\t _R}{d x}(x)$.
\end{assertion}

We are now ready to find $A\in \C $ (given $B\in [0,\infty )$) such that the
complex numbers $L(x),R(x)$ in equation (\ref{eq:per}) have the same
argument. 
\begin{assertion}
\label{ass1}
Given $B\geq 0$, we have one of the following two possibilities.
\begin{enumerate}[(A)]
\item If $\theta_R (x_0)\leq \theta_L(x_0)$  (here $x_0=x_0(B)$ refers to
the value obtained in Assertion~\ref{ass6.1}),
then there exists a unique point $x_1=x_1(B,y)\in I=\left[ x_0, x_0+\frac{\pi -\t _R(x_0)}{m(B,x_0)}\right) $
such that $\theta_R (x_1)=\theta_L(x_1)$.
\item If $\theta_R (x_0)\geq \theta_L(x_0)$,
then there exists a unique point $x_1=x_1(B,y)\in I=\left( x_0-\frac{\t _R(x_0)+
\frac{\pi }{2}}{m(B,x_0)},x_0\right] $ such that $\theta_R (x_1)=\theta_L(x_1)$.
\end{enumerate}
Furthermore, the function $(B,y)\mapsto x_1(B,y)$ is continuous.
\end{assertion}
\begin{proof}
Assume we are in case A, i.e., $\t _R(x_0)\leq \t _L(x_0)$.
First note that by~(\ref{thetaR'}), the parameterized curve $\G_R$
given by $x\in [x_0,+\infty )\mapsto (x,\t _R(x))$ lies entirely
above the half-line $r$ that starts at $(x_0,\t _R(x_0))$ with slope $m(B,x_0)$
(this slope can be assumed arbitrarily close to 1 by taking $x_0$ large enough;
note also that  $\theta_R$ restricted to the half-open interval
$[x_0,+\infty )$ is injective).
By comparison of $\G _R$ with $r$, we easily conclude that
$\t _R$ must reach the value $\pi $ at some point $\wt{x}_1>x_0$
which is not greater than $x_0+\frac{\pi -\t _R(x_0)}{m(B,x_0)}$, see
Figure~\ref{fig6} left.
\begin{figure}
\begin{center}
\includegraphics[height=5cm]{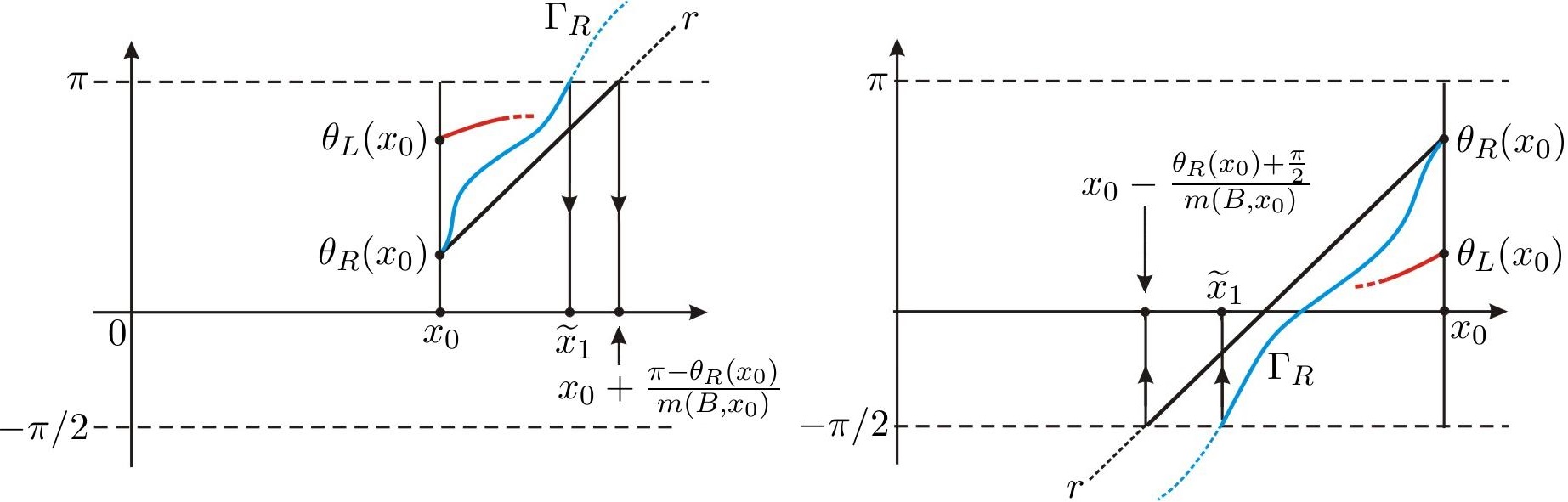}
\caption{Left: Case A of Assertion~\ref{ass1}; the half-line
$r$ has slope $m(B,x_0)$ very close to $1$. The red and blue
curves must intersect. Right: Case B of Assertion~\ref{ass1}.
}
\label{fig6}
\end{center}
\end{figure}
Since the range of $x\in [x_0,\infty )\mapsto \t _L(x)$ is $(-\frac{\pi }{2},\pi )$,
the intermediate value theorem applied to the function $\t _R-\t _L$ implies that
there is at least one point $x_1\in [x_0,\wt{x}_1)\subset I$ such that $\t _R(x_1)=
\t _L(x_1)$. Since $(\t _R-\t _L)'(x)>0$ for all $x>x_0$ by Assertion~\ref{ass6.1},
then the point $x_1=x_1(B,y)$ is unique.

Case B can be proved with similar reasoning as in the last paragraph,
using Figure~\ref{fig6} right instead of Figure~\ref{fig6} left (note that we need to apply Assertion~\ref{ass6.1}
for values of $x$ less than $x_0$, which can be done by taking $x_0$ large enough).
Finally, the continuous dependence of
$x_1$ with respect to $B,y$ follows from the same type of dependence for the data
$\t _L,\t_R,x_0,m(B,x_0)$. This finishes the proof of the assertion.
\end{proof}
Fix $B\in [0,\infty )$. Given $y\in \R$, consider the value $x_1=x_1(B,y)$
appearing in Assertion~\ref{ass1} 
and let $A=x_1+iy$.
Since the left- and right-hand sides of equation~(\ref{e2})  (with
$x=x_1$, note that $t>0$ is still to be determined) are not zero and the arguments
of the complex numbers on each of the sides are the same, then there
exists a unique $t=t(B,y)\in(0,\infty)$ varying continuously in
$B,y$ so that equation~(\ref{e2}) holds. Therefore, the period
problem (\ref{eq:per}) is solved for the Weierstrass data
(\ref{eq:gdh1}) given by the values $B, A=x_1+iy$, $t$. Let $E(B,y)$
denote the related minimally immersed annulus in $\rth$ defined by
these Weierstrass data. Let $F(B,y)$ denote the length of the
horizontal flux of $E(B,y)$, which by equation (\ref{eq:flux}) can be identified with the
integral
\begin{equation}
\label{lengthflux}
F(B,y)=\left| \int_{\partial_R}g\,dh\right| \stackrel{(\ref{eq:pergdh})}{=}
2\pi t\left| (x_1-By-B)+(Bx_1+y)i\right| .
\end{equation}
By equation~(\ref{e2}) and the fact that $y\in \R \mapsto x_1(B,y)$
is bounded for $B$ fixed (by Assertion~\ref{ass1}), 
we see that $y\in \R \mapsto t(B,y)$
grows exponentially to $\infty$ as $y\to +\infty$ and
decays exponentially to $0$ as $y\to -\infty$. Hence,
from (\ref{lengthflux}) we also find:
\[
\lim_{y\to+\infty} F(B,y)=\infty, \qquad \lim_{y\to -\infty} F(B,y)=0.
\]
Since $y\in \R \mapsto F(B,y)$ is continuous, the
intermediate value theorem proves the next assertion.

\begin{assertion}
\label{ass6.4} 
Given $B\in [0,\infty)$, the minimally immersed annuli $\{
E(B,y)\mid y\in \R \} $ defined above attain all possible positive
lengths for the horizontal component of their fluxes (the vertical
component of their flux is always equal to $-2\pi B$). Hence, by the
axiom of choice, for each $a>0$ and $b\in [0,\infty )$, there exists
$y=y(a,b)\in \R$ such that $E(\frac{b}{2\pi},y(a,b))$ has vertical
component of its flux equal to $-b$ and the length of the horizontal
component of its flux is equal to $a$. After a rotation of
$E(\frac{b}{2\pi},y(a,b))$ around the $x_3$-axis, we obtain the
desired example $E^{a,b}$ with flux vector $(a,0,-b)$.
\end{assertion}

\par
\vspace{.2cm} \noindent
 {\bf Case 2: The flux vector is vertical.}
 \par
We now complete the construction of the canonical examples
$E^{0,b}$, as complete immersed minimal annuli with infinite total
curvature and flux vector $(0,0,-b)$ for any $b\in (0,\infty )$. Fix
$B\in (0,\infty )$ and consider the following choices for $g, dh$:
\begin{equation}
\label{eq:gdh2} g(z)=e^{iz}\frac{z-A}{z-\overline{A}},\qquad dh=
\left( 1+\frac{B}{z}\right) dz, \quad z\in D(\infty, R),
\end{equation}
where $A\in \C-\{0\}$ and $R> |A|$. Our goal is to show that given
$B>0$, there exists $A$ as before so that the
Weierstrass data given by (\ref{eq:gdh2}) solve the period problem
(\ref{eq:periodproblem}) and define a complete immersed minimal
annulus with infinite total curvature and flux vector $(0,0,-2\pi
B)$. Similarly as in Case 1, we can write $g$ as $g(z)=e^{iz+f(z)}$
where $f(z)=\log \frac{z-A}{z-\overline{A}}$, which is univalent and
holomorphic in $D(\infty ,R)$ because $R>|A|$, and $f$ extends to $z=\infty $ 
with $f(\infty )=0$. The period problem
(\ref{eq:periodproblem}) for $(g,dh)$ reduces in this vertical flux
case to
\begin{equation}
\label{eqw0} \int_{\partial_R}g \,
dh=\int_{\partial_R}\frac{dh}{g}=0.
\end{equation}

We first compute $\int_{\partial_R}g\,dh$ in terms of residues of a
meromorphic one-form in $\{|z|\leq R\}$ which we impose to be zero:
\begin{equation}
\label{eqw1new}
 \int_{\partial_R} g\,dh =-2\pi i \left[ e^{i\overline{A}}\left(
\overline{A}-A+B-\frac{AB}{\overline{A}}\right)
+\frac{AB}{\overline{A}}\right] =0.
\end{equation}
A direct computation shows that the integral
$\int_{\partial_R}\frac{dh}{g}$ is the complex conjugate of the
expression involving $A,B$ in equation~(\ref{eqw1new}). Hence it
suffices to find, given $B>0$, a nonzero complex number
$A$ solving~(\ref{eqw1new}).

Multiplying equation~(\ref{eqw1new}) by  $\overline{A}$ and dividing
by $-2\pi i$ yields:
\begin{equation}
\label{eqw2new} e^{i\overline{A}}\left(
\overline{A}^2-|A|^2+B(\overline{A}-A)\right) =-AB \quad
\end{equation}
Letting $A=x+iy$, we obtain
\begin{equation}
 \label{eqw3new}
2ye^y e^{ix}\left[ y+i(x+B)\right] = B\left( x+iy \right) .
\end{equation}
After fixing $y>0$ and letting $x$ vary in the range $\{ 1<x<\infty
\} $, 
we have that $1<\frac{d\theta_L}{dx}(x)$
and $\frac{d \theta_R}{dx}(x)< 0$, where $\t_{L}(x)$ and $\t_R(x)$
denote the argument of the left-
and right-hand sides of equation~(\ref{eqw3new}), both considered in the
variable $x$. Also note that we
can take branches of $\t _R(x),\t _L(x)$ so that
$0<\t _R(x)<\frac{\pi}{2}$ for the range of
values of $x$ and $y$ that we are considering and $\t _L(2\pi
-\frac{\pi }{2})\in (-\frac{\pi}{2},0)$.

We define the interval $J= (2\pi -\frac{\pi}{2}, 2\pi +\frac{\pi}{2}) $.
It follows that given $B>0$, for each $y>0$ there is a
unique $x =x(y)\in J$ such that the arguments of both sides
of~(\ref{eqw3new}) are equal for $A=x(y) +iy$. Note that the norm of
the left-hand side of~(\ref{eqw3new}), considered now to be a
positive function of $y$, varies continuously with limit value $0$
as $y$ tends to $0^+$ and $+\infty$ as $y\to \infty$, and this norm
grows exponentially to $+\infty$ as $y\to \infty$. On
the other hand, the norm of the right-hand side of~(\ref{eqw3new})
is bounded away from zero and grows linearly in $y$ as
$y$ tends to $+\infty$.  Thus, for any initial value $B>0$, 
there exists $y\in \R^+$ and the corresponding $x=x(y)\in J$ such
that the left- and right-hand sides of~(\ref{eqw3new}) are equal.
This completes the proof of Case~2.

We can summarize this Case 2 in the next assertion. The proof of the
last statement of this assertion follows from the observation that
when the flux vector is $(0,0,-b)$, then the Weierstrass data of the
corresponding immersed minimal annulus $E^{0,b}$ is given by
equation~(\ref{eq:gdh2}) with $B=\frac{b}{2\pi }$. It is easy to
check that the conformal map $z \stackrel{\Phi }{\mapsto
}\overline{z}$ in the parameter domain $D(R,\infty)$ of $E^{0,b}$
satisfies $g\circ \Phi =1/\overline{g}$, $\Phi ^*dh=\overline{dh}$.
Hence, after translating the surface so that the image of the point
$R\in D(R,\infty )$ lies on the $x_3$-axis, we deduce that $\Phi $
produces an isometry of $E^{0,b}$ which extends to a 180$^o$-rotation
of $\rth$ around the $x_3$-axis.

\begin{assertion}
\label{ass6.6}
 For any $b>0$, there exists a constant $A\in \{x+iy
\mid x\in J, \; y\in \R^+ \} $ depending on $b$ such that the
corresponding Weierstrass data in (\ref{eq:gdh2}) (with
$B=\frac{b}{2\pi }$) define a minimally immersed annulus $E^{0,b}$
with flux vector $(0,0,-b)$. Furthermore, the conformal map $z\to
\overline{z}$ in the parameter domain $D(R,\infty)$ of $E^{0,b}$
extends to an isometry of $\rth$ which is a 180$^o$-rotation around the
$x_3$-axis.
\end{assertion}
Finally, we can join Cases 1 and 2 to conclude the following main result of this
section.
\begin{assertion}
\label{ass6.7} For each $a,b\geq 0$, there exists a
complete, minimally immersed annulus $E^{a,b}$ defined on some
$D(\infty,R)$, $R=R(a,b)>0$ by the Weierstrass data
\[
\left( g(z)=e^{iz+f(z)},\ dh=\left( 1+\frac{1}{2\pi }\frac{b}{z-\mu }\right) dz\right) ,
\]
where $f(z)$ is a holomorphic function in $D(\infty ,R)\cup
\{ \infty \}  $ with $f(\infty)=0$, and $\mu \in \C $.
Furthermore, the flux vector of $E^{a,b}$ along $\{ |z|=R\} $
is $(a,0,-b)$ and when $a=b=0$, we choose $f(z)=0$.
\end{assertion}

\section{The proof of Theorem~\ref{thm1.3} and the embeddedness
of the canonical examples $E_{a,b}$.}
\label{sec7}
In Section~\ref{sec6} we focused on the Weierstrass representation of the
surfaces under consideration.  The actual surfaces themselves
for given Weierstrass data are only determined up
to a translation, because they depend on integration of analytic
forms after making a choice of a base point $z_0$ in
the parameter domain ${\cal D}\subset \C$.  The  parametrization
of the surfaces is calculated as a path integral where
the path begins at the base point.  In the proof of the next theorem,
the reader should remember that different choices
of base points give rise to translations of the image minimal annulus that we
are considering; the image point of the base point is always the origin $\vec{0}\in \R^3$.

Theorem~\ref{thm1.3} stated in the Introduction follows from the
material described in the last section together with items~{1, 5, 6,
7, 8} and { 9} of the next theorem. Theorem~\ref{th1.1} is also
implied by the next statement. Recall that the remaining item~3c
to be proved in Theorem~\ref{th1.1} follows immediately from
items~\ref{it8}, \ref{it9} below and the fact that $E_{0,0}$ can be taken to
be the end of the vertical helicoid with Weierstrass data given by
$g(z)=e^{iz}$, $dh=dz$. In item~\ref{it6} below, we use the notation
introduced just before the statement of Theorem~\ref{thm1.3}.

The
reader will find it useful to refer to Figure~\ref{Eab} in the Introduction for a
visual geometric interpretation of some qualities of the complete,
embedded minimal annulus $E$ appearing in the next statement.
\begin{theorem}
\label{main} Suppose $E$ is a minimally immersed annulus in $\rth$
which is the image of a conformal immersion $X\colon D(\infty,
R')\to \rth$, with flux vector $(a,0,-2\pi \l )$ for some $a,\l \geq0$,
whose Weierstrass data $(g,dh)$ on $D(\infty ,R')$
are given by
\[
\left( g(z)=e^{iz+f(z)},\ dh=\left( 1+\frac{\l }{z-\mu }\right) dz\right) ,
\]
where $f(z)$ is a holomorphic  at
$\infty$ with $f(\infty)=0$, and $\mu \in \C$, $|\mu |<R'$.
Then:
\begin{enumerate}[1.]
\item \label{it1} For some $R_1\geq R'$, $X$ restricted to
$D(\infty, R_1)$ is injective.
In particular, each canonical example $E^{a,b}$ described in
Assertions~\ref{ass6.4} and~\ref{ass6.6} contains an end representative 
$E_{a,b}$ which is a properly embedded minimal annulus with compact
boundary and infinite total curvature.

\item  \label{it2} The Gaussian curvature function $K$ of $X$ satisfies
$\limsup_{R\to \infty} \left| K|_{X(D(\infty ,R))}\right| =1$.
In particular, $K$ is bounded.
\item  \label{it3,4} There exist $(x_T, y_T), (x_B,y_B)\in \R^2$ such that
the image curves $X([R',\infty)), X((-\infty, -R'])$ are asymptotic
to the vertical half-lines $r_T=\{(x_T,y_T,t)\mid t\in [0,\infty)\}$,
$r_B=\{(x_B, y_B,t)\mid t\in (-\infty,0]\}$ respectively.
\item
\label{it5}
$(x_T,y_T)-(x_B,y_B)=(0, -\frac{a}{2})$.
\item
\label{it7} The sequences of translated surfaces
$X_n=E-(0,0,2\pi n +\l \log n)$ and
$Y_n=E+(0,0,2\pi n-\l \log n)$ converge respectively to
right-handed vertical helicoids $H_T$, $H_B$ where $r_T$ is
contained in the axis of $H_T$ and $r_B$ is contained in the axis of
$H_B$. Furthermore, $H_B=H_T+(0, \frac{a}{2},0)$.
\item
\label{it6} After a fixed  translation of $E$, assume that
\begin{equation}
\label{norm}
(x_T,y_T)+(x_B,y_B)=(0,0)\quad \mbox{and}\quad x_3(z)=x+\l \log
|z-\mu|,
 \end{equation}
where $x_3$ is the third coordinate function of $X$ and
$z=x+iy$.
\begin{enumerate}[a.]
\item \label{it6-1}There exists an $R_E>1$ such that
$E-C(R_E)$ consists of two disjoint multivalued graphs $\Sigma _1, \Sigma
_2$ over $D(\infty ,R_E)\subset \R^2$ of smooth functions
$u_1,u_2\colon \widetilde{D}(\infty ,R_E)\to \R $ whose gradients with respect
to the metric on $\widetilde{D}(\infty ,R_E)$ obtained by pulling back the
standard flat metric in $D(\infty ,R_E)$,
satisfy $\nabla u_i(r,\theta )\to 0$ as $r\to \infty $.

\item \label{it6-2} Consider the
multivalued graphs $v_1,v_2\colon \widetilde{D}(\infty ,R_E)\to \R $
defined by
 \begin{eqnarray}
v_1(r,\theta )=\rule{0cm}{.5cm}\left( \t +\frac{\pi}{2}\right) +
\l \log
\sqrt{ \left( \t+\frac{\pi}{2}\right) ^2 + [\log(2r)]^2}, \label{ss}
 \\
v_2(r,\theta )=\rule{0cm}{.5cm}\left( \t-\frac{\pi}{2}\right)
+\l \log \sqrt{\left( \t-\frac{\pi}{2}\right) ^2 +
[\log(2r)]^2}. \label{sss}
 \end{eqnarray}
Then, for each $n\in \N$, there exists an $R_n >R_E$ such that $|u_i -v_i| <\frac1n$
in $\widetilde{D}(\infty ,R_n)$. In particular,
for $\theta$ fixed and $i=1,2$, we have $\lim_{r\to \infty}
\frac{u_i(r,\theta)}{\log (\log(r))} =\l $ (see Figure~\ref{Eab}
and note that $b=2\pi \l $ 
there).

\item \label{it6-3}
 Furthermore:
\begin{enumerate}[I.]
 \item  When $a=0$, then on $\widetilde{D}(\infty ,R_E)$ we have $|u_i -v_i| \to 0$ as
$r+|\theta |\to\infty $.
\item The separation function $w(r,\theta )=u_1(r,\theta )-u_2(r,\theta )$
converges to $\pi $ as $r+|\theta |\to
\infty $.
\end{enumerate}
\end{enumerate}

\item
\label{it8}
 Suppose that $X_2\colon D(\infty, R_2)\to \rth$ is
another conformal minimal immersion with the same flux vector
$(a,0,-2\pi \l )$ as $X$ and Weierstrass data $(g_2,dh_2)$ given by
\[
\left( g_2(z)=e^{iz+f_2(z)}, dh_2=\left( 1+\frac{\l }{z-\mu_2}\right) dz\right)
\]
where $f_2(z)$ is holomorphic  at $\infty$ with $f_2(\infty)=0$, and $\mu _2\in \C$.
Then, there exists a  vector $\tau \in \R^3$ such that the
$X_2(D(\infty, R_2)) +\tau$ is asymptotic to $E=X(D(\infty, R_1))$.
In particular, for some translation vector $\wh{\tau}\in
\R^3$, $E+\wh{\tau}$ is asymptotic to the canonical
example $E_{a,b}$, where $b=2\pi \l $. Note that if $E$ and $E_{a,b}$
are each normalized by a translation as in (\ref{norm}), then $E$ is asymptotic to $E_{a,b}$.
\item
\label{it9}
Given $(a,b)\neq(a',b') \in [0,\infty ) \times [0,\infty )$,
then, after any homothety and a rigid motion applied to $E_{a,b}$,
the image surface is not asymptotic to $E_{a',b'}$.
\item
\label{it10}
Each of the canonical examples $E_{0,b}$ for $b\in \R$ is invariant
under the {\rm 180$^o$}-rotation around the $x_3$-axis $l$,
and $l\cap E_{0,b}$ contains two infinite rays.
\end{enumerate}
\end{theorem}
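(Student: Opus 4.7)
The plan is to leverage the explicit Weierstrass data $(g,dh)=(e^{iz+f(z)},(1+\tfrac{\l}{z-\mu})dz)$ to produce a sharp asymptotic expansion of the conformal immersion $X$ as $|z|\to\infty$, and then to read off each of items~\ref{it1}--\ref{it10} from this expansion together with the period/flux identities. The key observation is that since $f$ extends holomorphically across $\infty$ with $f(\infty)=0$, the pair $(g,dh)$ is a small perturbation of the helicoid data $(e^{iz},dz)$, augmented by the meromorphic term $\frac{\l}{z-\mu}\,dz$ whose real primitive is $\l\log|z-\mu|$. Consequently, writing $z=x+iy$, the integration of $\phi_j=\tfrac{1}{2}(1/g-g,\,i(1/g+g),\,2)\,dh$ from a fixed base point produces, uniformly in $y$ on compacta of the $x$-variable,
\[
X(x+iy)=\bigl(\sin x\sinh y,\,-\cos x\sinh y,\,x+\l\log|z-\mu|\bigr)+\varepsilon(z),
\]
where $\varepsilon(z)\to (x_T,y_T,0)$ (resp.\ $(x_B,y_B,0)$) as $x\to+\infty$ (resp.\ $x\to-\infty$) for $y$ bounded, and $\varepsilon\to 0$ in $C^k$ after suitable vertical translations in the region $|y|\to\infty$. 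I would derive this by splitting $\phi_j$ into a leading helicoidal term plus remainders controlled by $|f(z)|+|f'(z)|+|z-\mu|^{-1}=O(|z|^{-1})$, and integrating along contours adapted to the region of interest.

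From this expansion the easy items fall out. Item~\ref{it2} follows from the direct Weierstrass formula $|K|=\bigl(\frac{2|dg/g|}{(|g|+|g|^{-1})|dh|}\bigr)^2$, which along the curve $|g|=1$ tends to $1$ and is bounded by $1+o(1)$ elsewhere. Items~\ref{it3,4} and~\ref{it5} follow by observing that for $|y|$ small and $x\to\pm\infty$ the horizontal components of $X(z)$ converge to constants $(x_T,y_T)$, $(x_B,y_B)$, while the height goes to $\pm\infty$; the difference $(x_T,y_T)-(x_B,y_B)$ is the horizontal period of a loop around the puncture, and (\ref{eq:flux}) rotated by $90^{\circ}$ identifies this with half of the horizontal flux $(a,0)$, giving $(0,-a/2)$. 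Item~\ref{it7} follows by changing variables $z=2\pi n+\zeta$ in $X-(0,0,2\pi n+\l\log n)$: the translated height becomes $\mathrm{Re}(\zeta)+\l\log|2\pi n+\zeta-\mu|-\l\log n\to\mathrm{Re}(\zeta)$, the Gauss map tends to $e^{i\zeta}$ and $dh$ tends to $d\zeta$, so the limit is a standard helicoid, and the offset $(0,a/2,0)$ in (\ref{eq:HB-HT}) is the same horizontal flux computation. Item~\ref{it1} (injectivity on a subend) is a consequence of the convergence to a helicoid in item~\ref{it7} together with monotonicity of $x_3$: outside a large cylinder the surface is a small $C^1$ perturbation of an embedded helicoidal end, hence embedded.

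Item~\ref{it6} is the most delicate step and the principal obstacle. It requires rewriting the asymptotic formula in polar coordinates $(r,\theta)$ on the horizontal projection, where the two sheets correspond to two branches selected by sign of $\sinh y$. Inverting the horizontal projection $z\mapsto(\sin x\sinh y,-\cos x\sinh y)$ asymptotically yields $\sinh y\sim r$ and $x\sim\theta\pm\tfrac{\pi}{2}$ modulo $2\pi$, so the height equation $x_3=x+\l\log|z-\mu|$ with $|z|^2=x^2+y^2\sim(\theta\pm\tfrac{\pi}{2})^2+[\log(2r)]^2$ reproduces precisely the formulas (\ref{ss})--(\ref{sss}) for $v_i$. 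Controlling the remainder $|u_i-v_i|$ on $\widetilde{D}(\infty,R_n)$ amounts to quantifying the decay of $f$ and of the inversion error; when $a=0$ one has the extra $\mathbb{Z}_2$-symmetry of Assertion~\ref{ass6.6} which forces the error to decay as $r+|\theta|\to\infty$, while the separation $w\to\pi$ comes from the $\pi$-shift between $v_1$ and $v_2$ plus decay of the error. The core analytic difficulty is proving these remainder estimates in $C^k$, uniform over the full multivalued domain and not just on fixed $r$-annuli.

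Finally, items~\ref{it8}, \ref{it9}, \ref{it10} are consequences of the uniqueness of the leading-order asymptotics. For item~\ref{it8}, two immersions with identical flux $(a,0,-2\pi\l)$ share the same helicoidal leading part and the same $\l\log|z-\mu|$ height correction up to a translation; the difference is absorbed by the fact that $f_1-f_2$ vanishes at $\infty$, which geometrically produces convergence after one fixed translation. For item~\ref{it9}, a homothety by $\rho$ scales the flux vector by $\rho$ while the helicoidal pitch (fixed at $2\pi$ in our normalization) scales by $\rho$ as well; comparing the asymptotic helicoidal speed, the horizontal offset $a/2$, and the coefficient $\l=b/(2\pi)$ of the double-log term in item~\ref{it6-2} shows that the pair $(a,b)$ is a complete invariant of the asymptotic geometry modulo homothety and rigid motion. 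Item~\ref{it10} follows from Assertion~\ref{ass6.6}: the conformal involution $z\mapsto\bar z$ induces a $180^{\circ}$-rotation about the $x_3$-axis, and its fixed set $\{z=\bar z\}=\R$ maps to two rays in $l\cap E_{0,b}$.
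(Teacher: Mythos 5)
Your overall strategy coincides with the paper's: treat $(g,dh)$ as a perturbation of the helicoid data $(e^{iz},dz)$ plus the $\l \log|z-\mu|$ height correction, integrate the Weierstrass forms explicitly, extract the axis offset from the flux identity, obtain the helicoid limits by the substitution $z=2\pi n+\zeta$, and invert the horizontal projection to get the multigraph formulas (\ref{ss})--(\ref{sss}). Items~\ref{it2}, \ref{it7}, \ref{it9} and \ref{it10} are handled essentially as in the paper. However, there is a genuine gap exactly where you acknowledge ``the core analytic difficulty'': the uniform remainder estimate that justifies your expansion of $X$ over the entire domain. The remainders are not absolutely integrable --- the leading correction to $(x_1+ix_2)$ along horizontal lines is $\mathrm{Re}(C_1)\int e^{it}t^{-1}\,dt$, which converges only conditionally --- so the scheme ``remainder $=O(|z|^{-1})$, then integrate'' does not close: integrating $O(|z|^{-1})$ over a contour of length comparable to $|z|$ gives $O(\log|z|)$, not $o(1)$. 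The paper resolves this in two places: for items~\ref{it3,4} and~\ref{it5} by an alternating-series argument for the oscillatory integrals and by the exponential decay $e^{-r}$ of $|g|^{\pm1}$ along the top of the square contour $\partial S_r=\Lambda_r-\overline{\Lambda_r}$; and for item~\ref{it6} by Assertion~\ref{ass2} (proved in the appendix), which decomposes the error integrals $A_1,A_2$ along a circle-arc-plus-two-segments contour into a uniformly small part $B_i$ (property $\diamondsuit$) plus a part bounded by $\frac{C_i}{R'}(e^{|y|}-1)$; the latter is precisely what is needed to absorb the error into the factor $e^{\pm y}$ of the leading helicoidal term and obtain $y=\log(2r)+O(1/R')$, $x=\t\pm\frac{\pi}{2}+O(1/R')$. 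Without some version of this estimate, item~\ref{it6-2} --- and with it items~\ref{it8}, \ref{it9} and the embeddedness claim in item~\ref{it1} --- remains unproved.

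A secondary error: for part~I of item~\ref{it6-3} you invoke the $\Z_2$-symmetry of Assertion~\ref{ass6.6}, but that symmetry is a feature of the specific canonical examples $E^{0,b}$ built from (\ref{eq:gdh2}); an arbitrary immersion satisfying the hypotheses of the theorem with $a=0$ need not admit it. The paper instead deduces part~I from item~\ref{it5} (when $a=0$ both rays $r_T,r_B$ lie on the $x_3$-axis) combined with the helicoid limits of item~\ref{it7}, which control $|u_i-v_i|$ in the remaining regime where $r$ stays bounded and $|\t|\to\infty$.
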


\begin{proof}
We will follow the ordering
\ref{it2}$\to $\ref{it3,4}$\to $\ref{it5}$\to $\ref{it7}$\to $\ref{it6}$\to $\ref{it8}$\to $\ref{it9}$\to $\ref{it10}$\to $\ref{it1}
when proving the items in the statement
of Theorem~\ref{main}.

As $f$ is holomorphic in a neighborhood of $\infty $ with  $f(\infty )=0$,
the series expansion of $f$ has only negative powers of $z$, so we can
find $C>0$ depending only on $f$ such that
\begin{equation}
|f(z)|\leq C|z|^{-1},\qquad |f'(z)|\leq C|z|^{-2},\qquad
\qquad \mbox{for any } z\in D(\infty ,R').
\label{eq:estimf}
\end{equation}
Throughout the proof, we will assume that $|C|\geq R'$.  We will frequently
refer to these two simple estimates for $f(z)$ and $f'(z)$.

We first prove item~\ref{it2}. The expression of the absolute Gaussian
curvature $|K|$ of $X$ in terms of its Weierstrass data is
\[
|K|=\frac{16}{(|g|+|g|^{-1})^4}\frac{|dg/g|^2}{|dh |^2 }=\frac{1}{[\cosh
\mbox{Re} (iz+f(z)]^4}
\frac{\left|i+f'(z)\right|^2}{\left| 1+\frac{\l }{z-\mu }\right|^2}.
\]
Since $f$ is holomorphic in a neighborhood of $\infty $ with $f(\infty )=0$,
the last right-hand side
 is bounded from above, which easily implies item~\ref{it2}.

We next demonstrate item~\ref{it3,4}. Let $\g (r)=r$, $r\geq R'$,
be a parametrization of the portion of the
positive real axis in $D(\infty ,R')$. We will check that $X\circ \g $
is asymptotic to a vertical half-line pointing up, which will be
$r_T$; the case of $r_B$ follows from the same arguments, using
$\g (r)=-r$. From the Weierstrass data of $X$, we have that for
given real numbers $T,R$ with $R'<R<T$:
\[
(x_1+ix_2)(T)-(x_1+ix_2)(R)
=\frac{1}{2}\left( \overline{
\int _{R}^{T}\frac{dh}{g}}-\int _{R}^{T}g\, dh\right)
\]
\[
=\frac{1}{2}\left( \overline{\int _{R}^{T}e^{-it-f(t)}\left(
1+\frac{\l }{t-\mu}\right) dt}-\int _{R}^{T}e^{it+f(t)}\left(
1+\frac{\l }{t-\mu}\right) dt\right)
\]
\begin{equation}
\label{eq:intnew} =\frac{1}{2}\int _{R}^{T}e^{it}\left[
e^{-\overline{f(t)}}\left( 1+\frac{\l }{t-\overline{\mu}}\right)
-e^{f(t)}\left( 1+\frac{\l }{t-\mu}\right) \right] dt.
\end{equation}
Expanding the expression between brackets in the last displayed equation
as a series in the variable $t$, we find
\[
e^{-\overline{f(t)}}\left( 1+\frac{\l }{t-\overline{\mu}}\right)
-e^{f(t)}\left( 1+\frac{\l }{t-\mu}\right) =-\frac{2\mbox{Re}(C_1)}{t}
+{\cal O}(t^{-2}),
\]
where $f(t)=\frac{C_1}{t}+{\cal O}(t^{-2})$, $C_1\in \C-\{ 0\} $
and ${\cal O}(t^{-2})$ denotes a function
such that $t^2{\cal O}(t^{-2})$ is bounded as $t\to \infty $. Hence, (\ref{eq:intnew})
yields
\[
(x_1+ix_2)(T)-(x_1+ix_2)(R)
 =-\mbox{Re} (C_1)\int_{R}^{T}\frac{e^{it}}{t}\, dt
 +\int_{R}^{T}e^{it}{\cal O}(t^{-2})\, dt
 \]

Note that $\left| \int _{R}^{T}e^{it}t^{-2}\, dt\right| \leq \int
_R^Tt^{-2}dt=R^{-1}-T^{-1}$ which converges to $R^{-1}$ as $T\to
+\infty $. On the other hand, taking $R,T$ of the form $R=\pi m$,
$T=\pi n$ for large positive integers $m<n$, we have
\[
\int _{R}^{T}\frac{e^{it}}{t}\, dt=\left( \sum _{k=1}^{n-m}\int _{\pi (m+k-1)}^{\pi (m+k)}\frac{\cos t}{t}dt
\right) +i\left( \sum _{k=1}^{n-m}\int _{\pi (m+k-1)}^{\pi (m+k)}\frac{\sin t}{t}dt\right)
\]
Both parenthesis in the last expression are partial sums of alternating
series of the type $\sum _k(-1)^ka_k$
where $a_k\in \R^+ $ converges monotonically to zero as $k\to \infty $. By the alternating
series test, both series converge
and we conclude that $(x_1+ix_2)(T)-(x_1+ix_2)(R)$ converges to a complex number as $T\to \infty $
(independent of $R$). As the third coordinate function of the immersion satisfies
\[
x_3(T)-x_3(R)=\mbox{Re}\int _R^Tdh=T-R+\l \log \left| \frac{T-\mu}{R-\mu }\right|,
\]
which tends to $\infty $ as $T\to \infty $, then we conclude that item~\ref{it3,4} of the theorem holds.

We next prove item~\ref{it5}, which is equivalent to:
\begin{equation}
\label{eqf}
\lim_{r\to +\infty} \left[(x_1+i x_2)(r)-(x_1+ix_2)(-r)\right]
=-\frac{ia}{2}.
\end{equation}
To see that (\ref{eqf}) holds, we  use computations inspired by those
in Section 5 of~\cite{hkp1}. Take
$r>R'$ and consider the boundary $\partial S_r$ of the square
$S_r \subset \C$ with vertices $r+ir,r-ir, -r-ir,-r+ir$.
$\partial S_r$ can be written as $\Lambda _r-\overline{\Lambda _r}$,
where $\Lambda _r$ is the polygonal arc with vertices $-r,-r+ir,r+ir,r$
(the orientations on $\partial S_r$ and $\Lambda _r$ are chosen so that
both lists of vertices are naturally well-ordered, see Figure~\ref{figsquare}).
\begin{figure}
\begin{center}
\includegraphics[width=6cm]{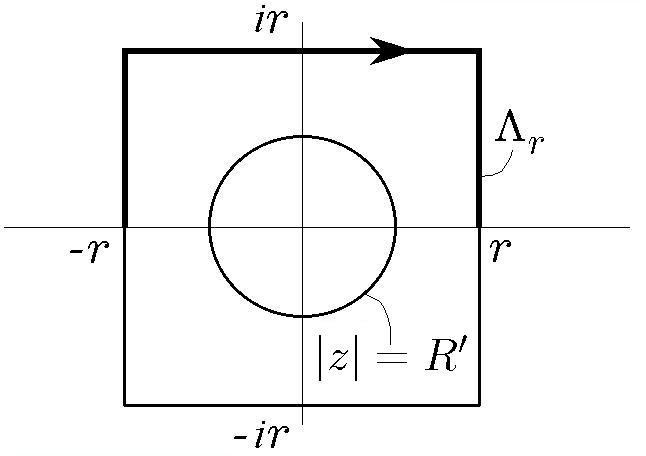}
\caption{The square $\Lambda _2-\overline{\L _r}$ is homologous to
the circle $\{ |z|=R'\} $ in $D(\infty ,R')$. }
\label{figsquare}
\end{center}
\end{figure}
As $\{ |z|=R'\} $ and $\partial S_r$ bound a compact domain in the
surface, it follows that the horizontal component of the flux of $X$
along $\partial S_r$ equals $(a,0)$. By equations~(\ref{eq:periodproblem}) and (\ref{eq:flux}),
this is equivalent to the following equality:
\begin{equation}
\int _{\Lambda_r}\frac{dh}{g}=\int
_{\overline{\Lambda_r}}\frac{dh}{g} +ia. \label{eq:flux0}
\end{equation}
Hence,
\[
(x_1+ix_2)(r)-(x_1+ix_2)(-r)=\int _{\Lambda_r}d(x_1+ix_2)
=\frac{1}{2}\left( \overline{\int _{\Lambda_r}\frac{dh}{g}}-\int
_{\Lambda _r}g\, dh\right)
\]
\[
\stackrel{(\ref{eq:flux0})}{=}
 \frac{1}{2}\left( \overline{\int _{\overline{\Lambda _r}}\frac{dh}{g}}-ia-\int _{\Lambda
_r}g\, dh\right) =\frac{1}{2}\left( I_2-I_1\right) +
\frac{1}{2}\left( \wh{I}_2-\wh{I}_1\right) -\frac{ia}{2},
\]
where $I_1=\int _{\Lambda _r}g\, dz$,
$I_2=\int _{\overline{\Lambda _r}}\overline{g}^{-1}\,
d\overline{z}$, $\h{I}_1=\int _{\Lambda _r}g\, \frac{\l }{z-\mu }dz$ and
$\wh{I}_2=\int _{\overline{\Lambda _r}}\overline{g}^{-1}\,
\overline{\frac{\l }{z-\mu }dz}$.

We now compute and estimate the norm of each
of these four integrals separately. We first calculate $I_1$ and $I_2$.
\[
\begin{array}{rcl}
I_1&=&{\displaystyle \int _0^rg(-r+iv)i\, dv
+\int _{-r}^rg(u+ir)\, du-\int _0^rg(r+iv)i\, dv}\\
&=&{\displaystyle e^{-r}\int _{-r}^re^{iu+f(u+ir)}\,
du+i\int _0^re^{-v}\left( e^{-ir+f(-r+iv)}-e^{ir+f(r+iv)}\right) dv.}
\end{array}
\]
If we take $r$ such that $\frac{r}{2\pi }$ is integer, then $I_1$
becomes
\[
I_1=e^{-r}\int _{-r}^re^{iu+f(u+ir)}\,
du+i\int _0^re^{-v}\left( e^{f(-r+iv)}-e^{f(r+iv)}\right) dv.
\]
and a similar computation for $I_2$ gives
\[
I_2=e^{-r}\int _{-r}^re^{iu-\overline{f(u-ir)}}du+i\int _0^re^{-v}
\left( e^{-\overline{f(-r-iv)}}-
e^{-\overline{f(r-iv)}}\right) dv.
\]
Thus, both $I_1$ and $I_2$ are sum of two integrals and each one
of these four integrals can be estimated using
(\ref{eq:estimf}) as follows:
\[
\begin{array}{rcl}
{\displaystyle \left| e^{-r}\int _{-r}^re^{iu+f(u+ir)}\, du\right| }&\leq &
{\displaystyle e^{-r}\int _{-r}^re^{\mbox{\footnotesize Re}(f(u+ir))}\, du }\\
& \leq &{\displaystyle e^{-r}\int _{-r}^re^{C|u+ir|^{-1}}\,
du\leq e^{-r}\int _{-r}^re^{Cr^{-1}}
\, du=2re^{-r+Cr^{-1}},}
\end{array}
\]
and analogously, $\left|e^{-r}\int _{-r}^re^{iu-\overline{f(u-ir)}}\,
du\right| \leq 2re^{-r+Cr^{-1}}$, while
\[
\left| i\int _0^re^{-v}\left( e^{f(-r+iv)}-e^{f(r+iv)}\right) dv\right|
\leq \int _0^re^{-v}\left| e^{f(-r+iv)}-e^{f(r+iv)}\right|  dv.
\]

Using (\ref{eq:estimf}), it is straightforward to check that both
$e^{f(-r+iv)}$ and $e^{f(r+iv)}$ can be expressed as
$1+{\cal O}(r^{-1})$, hence the last expression is of the type
$\int _0^re^{-v}{\cal O}(r^{-1})\, dv\leq C_1r^{-1}
\left( 1-e^{-r}\right) $,  for a constant $C_1>0$ independent of
$r$. Similarly, $\left| i\int _0^re^{-v}
\left( e^{-\overline{f(-r-iv)}}-e^{-\overline{f(r-iv)}}\right) dv\right|
\leq C_1r^{-1}\left( 1-e^{-r}\right) $. Since the integrands of
$\wh{I}_1$, $\wh{I_2}$ differ from the respective integrand of $I_1$,
$I_2$ by a product with $\frac{\l }{z-\mu }$ and since $|\frac{\l }{z-\mu }| < 1$
for $|z|$ sufficiently large, then $|\wh{I_1}|<|I_1|$,
$|\wh{I_2}|<|I_2|$, when $r$ is large.
In summary,
\[
\left| (x_1+ix_2)(r)-(x_1+ix_2)(-r)+\frac{ia}{2}\right|
\leq |I_2|+|I_1|
\leq 2\left[ 2re^{-r+Cr^{-1}}+C_1r^{-1}\left( 1-e^{-r}\right) \right] ,
\]
which tends to zero as $r\rightarrow +\infty $.
This completes the proof that equation~(\ref{eqf}) holds
and so proves item~\ref{it5} of the theorem.

Next we prove item~\ref{it7}. For $n\in \Z$, $2\pi n>R'$, let
$D(n)=\{z\in \C \mid |z-2\pi n|\leq 2\pi n-R'\}$ be the closed disk of
radius $2\pi n-R'$ centered at $2\pi n$, and note that $D(n)\subset
D(\infty ,R')$. For $|n|$ large, consider the immersion
$\wt{X}_n\colon D(n) \to \rth$ with the same Weierstrass data as $X$
restricted to the disk $D(n)$, and with base point $2\pi n$ (in
particular, $\wt{X}_n$ is a translation of a portion of $X$). The
minimal immersions ${\wt{X}}_n \colon D(n)\to \rth$ converge
uniformly to the right-handed vertical helicoid $H$ with Weierstrass
representation data $g(z)=e^{iz}$, $dh=dz$, $z\in \C $, and base
point $\wt{z}_0=0$. After letting $z_0\in D(\infty ,R')$ denote the
base point of $X$, then the third coordinate functions $x_3$ of $X$
and $\wt{x}_{3,n}$ of $\wt{X}_n$ are related by
\[
\begin{array}{rcl}
x_3(z)-2\pi n+\mbox{Re}(z_0)&=&
{\displaystyle \mbox{Re} \int _{z_0}^z\left( 1+\frac{\l }{z-\mu }\right) dz
 -2\pi n+\mbox{Re} (z_0)}
\\
&=&{\displaystyle \mbox{Re} (z)+\l \log \left| \frac{z-\mu }{z_0-\mu }\right|  -2\pi n}
\\
&=&{\displaystyle
 \int _{2\pi n}^z\left( 1+\frac{\l }{z-\mu }\right) dz+
\l \log \left| \frac{2\pi n-\mu }{z_0-\mu }\right|
}
\\
&=&{\displaystyle
\wt{x}_{3,n}(z)+\l \log \left| \frac{2\pi n-\mu }{z_0-\mu }\right| }
\\
&=&{\displaystyle
\wt{x}_{3,n}(z)+\l \log n+\l \log \left| \frac{2\pi -\frac{\mu }{n}}{z_0-\mu }\right| }.
\end{array}
\]
Therefore, the minimal surfaces $X_n:=E+(0,0,-2\pi n-\l \log n)$
converge as $n\to \infty $ to a translated image $H_T:=H+\tau $ of
the helicoid $H$, where $\tau \in \R^3$ has vertical component
$-\mbox{Re}(z_0)+\l \log \frac{2\pi}{|z_0-\mu |}$. Clearly, $r_T\subset r_T+(0,0,-2\pi n-\l \log n)$
is contained in $H_T$. Similar computations give that the surfaces
$E+(0,0,2\pi n-\l \log n)$ converge as $n\to \infty $ to a
translated image $H_B:=H+\tau '$ of $H$ such that $x_3(\tau
)=x_3(\tau ')$. Now item~\ref{it5} gives that $\tau -\tau
'=(0,-\frac{a}{2},0)$, and item~\ref{it7} holds.

Next we prove item~\ref{it6}. Throughout the proof of this item we
will let 
\[
\Delta (n)=\{ z=x+iy\in \C \mid |y|\geq n\} ,
\]
where $n\in \N$. First translate
$E$ as required in the first sentence of item~\ref{it6} of Theorem~\ref{main},
and use the same notation $X$ for the translated immersion
which parameterizes $E$. Next observe that for some fixed large
positive integer $n_0$ and for each $n\in \N$ with $n\geq n_0$, we
have
\begin{enumerate}[(P1)]
 \item $\partial D(\infty, R')$ is contained in the horizontal strip
 $\C -\Delta (n)$.
\item As $n\to \infty$, the Gaussian image of $X(\Delta(n))$ is contained in smaller
and smaller neighborhoods of $(0,0,\pm 1)$ in $\esf^2$, which
converge as sets to the set $\{(0,0,\pm1)\}$ in the limit.
 \end{enumerate}
Property (P2) implies that $E$ is locally graphical over its projection to the $(x_1,x_2)$-plane. We next study
this locally graphical structure.

From the Weierstrass representation of $X$ we conclude that for
$z=x+iy\in D(\infty ,R')$,
\begin{eqnarray}
(x_1+ix_2)(z)&=& \rule{0cm}{.6cm}{\displaystyle \frac{1}{2}\left( \overline{
\int^{z}\frac{dh}{g}}- \int^{z}g\, dh\right) }\nonumber
\\
&=&\rule{0cm}{.6cm}
\frac{1}{2}\left(
\overline{\int ^ze^{-i\xi -f(\xi )}\left( 1+\frac{\l}{\xi -\mu }
\right) d\xi }
-\int ^ze^{i\xi +f(\xi )}\left( 1+\frac{\l}{\xi -\mu }\right) d\xi
\right) \nonumber
\\
 &= &\rule{0cm}{.6cm}
\frac{1}{2}\left(
\overline{\int ^ze^{-i\xi }\left( 1+F_1(\xi )\right) d\xi }-\int ^ze^{i\xi }\left( 1+F_2(\xi )\right) d\xi  \right) \nonumber
\\
&=& \rule{0cm}{.6cm}
\frac{i}{2}(e^{iz}-e^{i\overline{z}})+\frac{1}{2}\left( \overline{A_1(z)}-A_2(z) \right)  \nonumber
\\
&=&\rule{0cm}{.6cm}
\frac{ie^{ix}}{2}(e^{-y}-e^y)+\frac{1}{2}\left( \overline{A_1(z)}-A_2(z) \right)  \label{eq:n},
\end{eqnarray}
where $F_1(\xi ),F_2(\xi )$ are holomorphic functions in $D(\infty ,R')\cup \{ \infty \}$ with $F_i(\infty )=0$, $i=1,2$
(we are using that $f(z)$ is holomorphic at $\infty $ with $f(\infty )=0$), and
\[
A_1(z)=\int ^ze^{-i\xi }F_1(\xi )\, d\xi ,\qquad
A_2(z)=\int ^ze^{i\xi }F_2(\xi )\, d\xi , \qquad |z|\geq R'.
\]

Since $F_1,F_2$ are holomorphic and vanish at infinity, we have
the following technical assertion, which will be proven in the appendix (Section~\ref{appendix}).
\begin{definition}
A complex valued function $E(z)$ defined in $D(\infty ,R')$ is said to satisfy property~$\diamondsuit$
if $E$ is bounded and $|E(z)|$ can be made uniformly small by taking $R'$ sufficiently large.
\end{definition}
\begin{assertion}
  \label{ass2}
There exist constants $C_1,C_2>0$ depending only on $F_1,F_2$ and complex
valued functions $B_1(z),B_2(z)$ in $D(\infty ,R')$ satisfying property $\diamondsuit $,
such that for all $z\in D(\infty ,R')$,
\begin{equation}
\label{ass2a}
|A_1(z)-B_1(z)|\leq \frac{C_1}{R'}(e^{|y|}-1),\qquad |A_2(z)-B_2(z)|\leq \frac{C_2}{R'}(1-e^{-|y|}).
\end{equation}
\end{assertion}

We come back to our study of the vertical projection of $E$.
If $y\geq 0$, then the right-hand side of (\ref{eq:n}) can be written as
$e^y\left( -\frac{ie^{ix}}{2}+D_1(z)\right) $, where
$D_1(z)=\frac{e^{-y}}{2}\left( e^{-y}ie^{ix}+\overline{A_1(z)}-A_2(z)\right) $.
In particular, the triangle inequality gives
\[
|D_1(z)|\leq \frac{e^{-y}}{2}\left( e^{-y}+\sum _{i=1}^2|A_i(z)-B_i(z)|+\sum _{i=1}^2|B_i(z)|\right)
\]
\begin{equation}
\label{eq:|D1|}
\stackrel{(\ref{ass2a})}{\leq }
\frac{e^{-y}}{2}\left( e^{-y}+ \frac{C_1}{R'}(e^{y}-1)+\frac{C_2}{R'}(1-e^{-y})+\sum _{i=1}^2|B_i(z)| \right)
=\frac{C_1}{2R'}+e^{-y}B_3(z),
\end{equation}
where $B_3(z)$ is a nonnegative function in $D(\infty ,R')$ that satisfies property $\diamondsuit$.
A similar analysis can be done for $y\leq 0$, concluding that
\begin{equation}
\label{eq:|x1+ix2|}
(x_1+ix_2)(z)=\left\{
\begin{array}{ll}
e^y\left( -\frac{ie^{ix}}{2}+D_1(z)\right) & \mbox{ if $y\geq 0$,}
\\
e^{-y}\left( \frac{ie^{ix}}{2}+D_2(z)\right) & \mbox{ if $y\leq 0$,}
\end{array}
\right.
\end{equation}
where $z=x+iy$ and $D_2(z)=\frac{e^{y}}{2}\left( -e^{y}ie^{ix}+\overline{A_1(z)}-A_2(z)\right) $; in particular,
\begin{equation}
\label{eq:|D2|}
|D_2(z)|\leq \frac{C_2}{2R'}+e^{y}B_4(z),
\end{equation}
where $B_4(z)$ is a nonnegative function in $D(\infty ,R')$ that satisfies property $\diamondsuit$.

Equation (\ref{eq:|x1+ix2|}) implies that for $n\in \N$ large, $|(x_1+ix_2)(z)|<2e^n$ whenever $|y|\leq n$,
from where $X(\C -\Delta (n))\subset C( 2e^n)$. Therefore,
choosing $R_E=2e^{n_0}$ with $n_0\in \N$ large, we deduce
from (\ref{eq:|x1+ix2|}) that $E-C(R_E)$ consists of two disjoint multivalued graphs
$\Sigma _1, \Sigma_2$ associated to smooth functions
$u_1,u_2\colon \widetilde{D}(\infty ,R_E)\to \R $ (recall that
$\wt{D}(\infty,R_E)$ denotes the universal cover of $D(\infty ,R_E)$), and
that are defined by the equations
\begin{equation}
\label{graph}
u_j(r,\t)\stackrel{(\ref{norm})}{=}x_3(z)=x+\l \log|z-\mu| ,
\end{equation}
where $x+iy=z=z(r,\t)$ satisfies $y\geq 0$ if $j=1$ (resp. $y\leq 0$ if $j=2$) and
\begin{equation}
\label{eq:reit}
re^{i\t }=\left\{
\begin{array}{ll}
e^y\left( -\frac{ie^{ix}}{2}+D_1(z)\right) & \mbox{ if $j=1$,}
\\
e^{-y}\left( \frac{ie^{ix}}{2}+D_2(z)\right) & \mbox{ if $j=2$.}
\end{array}
\right.
\end{equation}

In particular, $\Sigma _i$ is embedded, for $i=1,2$. Also,
from (\ref{eq:|D1|}), (\ref{eq:|D2|}) and (\ref{eq:reit})
we have that as functions of $r,\t $, the expressions of $x,y$ are of the form
\begin{equation}
\label{asym}
y=\log (2r)+\frac{b_1(r,\t )}{R'}, \qquad x=
\left\{
\begin{array}{ll}
\t +\frac{\pi }{2}+\frac{b_2(r,\t )}{R'}& \mbox{ if $j=1$,}
\\
\t -\frac{\pi }{2}+\frac{b_2(r,\t )}{R'}& \mbox{ if $j=2$,}
\end{array}
\right.
\end{equation}
where $b_1,b_2$ are bounded functions defined in $\widetilde{D}(\infty ,R_E)$.
Plugging (\ref{asym}) into (\ref{graph}) we deduce that
\begin{eqnarray}
u_1(r,\t )&=&
x+\l \log|z|+\l \log\left| 1-\frac{\mu }{z}\right| \nonumber
\\
&=&\t +\frac{\pi }{2}+\l \log \sqrt{\left( \t +\frac{\pi }{2}\right) ^2+[\log (2r)]^2}+\frac{b_3(r,\t )}{R'} \label{u1}
\\
&=& v_1(r,\t )+\frac{b_3(r,\t )}{R'},\nonumber
\end{eqnarray}
where $b_3$ is a bounded function defined in $\widetilde{D}(\infty ,R_E)$ and
$v_1$ is the multigraphing function defined in item~\ref{it6-2} of Theorem~\ref{main}.
Analogously, the multigraphing function that defines $\Sigma _2$ verifies that
$(u_2-v_2)R'$ is bounded.
Therefore, item~\ref{it6-2} of Theorem~\ref{main} is proved.

As for the property $\nabla u_i(r,\t )\to 0$ as $r\to \infty $ stated in
item~\ref{it6-1} of Theorem~\ref{main}, it follows from standard gradient estimates once
we know that around any point $re^{i\t }$ in $D(\infty ,R_E)$ with $r$ sufficiently large,
the boundary values of $u_i$ in a disk of radius $1$ centered at $re^{i\t }$ are
arbitrarily close to a constant value. This finishes the proof of item~\ref{it6-1} of Theorem~\ref{main}.

By item~\ref{it5} and our translation normalization,  if $a=0$
then the rays $r_T$ and $r_B$ are contained in the $x_3$-axis.
By item~\ref{it6-2}, $|u_i-v_i|$ can be made arbitrarily
small if $r$ is sufficiently large. Thus, in order to prove part I of item~\ref{it6-3},
it suffices to show that $|u_i-v_i|$ tends to zero when $r$ is bounded and $|\t |\to \infty $.
Consider the multigraph $E'$ over $D(\infty ,R_E)$ associated to the
function $v_i$ given by (\ref{ss}). The sequence of surfaces $E'-(0,0,2\pi n +\l \log n)$
can be seen to converge to the multigraph associated to the function
$v_{\infty }(r,\t )=\t +\frac{\pi }{2}-\l \log (2\pi )$, which is contained in a vertical
helicoid $H'$ whose the axis is the $x_3$-axis.
As by item~\ref{it7} the translated multigraphs $\Sigma _1-(0,0,2\pi n +\l \log n)$
converge to a multigraph contained in the vertical helicoid $H_T$ (which also contains
$x_3$-axis since $H_T$ contains $r_T$), then to finish part I of item~\ref{it6-2} it only
remains to show that $H=H'$. This equality holds since $|u_i-v_i|(r,\t )$ tends to zero
as $r\to \infty $. Now part~I of item~\ref{it6-2} is proved.

As for part II of item~\ref{it6-2} and regardless of the condition $a=0$, the facts that
the separation function between $v_1$ and $v_2$ has an asymptotic
value of $\pi$ as $r\to \infty $ together with the estimates in item~\ref{it6-2}
imply that the separation function $w(r,\theta )=u_1(r,\theta )-u_2(r,\theta )$
between $u_1$ and $u_2$ converges to $\pi $ as $r\to \infty $. If $r$ is bounded and
$|\t |\to \infty $, then the same limiting property for $w(r,\theta )$ holds by item~\ref{it7},
since the separation of the multigraphs in the helicoid
$H_B$ or in $H_T$ is constant $\pi $.
Now the proof of item~\ref{it6} is complete.

Next we prove item~\ref{it8}. Suppose that $X_2\colon D(\infty,
R_2)\to \rth$ is another conformal minimal immersion with the same
flux vector $(a,0,-2\pi \l )$ as $X$ and with Weierstrass data
$(g_2,dh_2)$ as described in item~\ref{it8}. Since $X,X_2$ have the
same flux vector, then item~\ref{it5} allows us to find translations
$Y_1=E+\tau_1$ and $Y_2=X_2(D(\infty, R_2))+\tau_2$ of $E=X(D(\infty, R_1))$ and
$X_2(D(\infty, R_2))$ respectively, such that
$Y_1, Y_2$ each has the same half-axes projections
$(x_T,y_T)=(0,-\frac{a}{4})$, $(x_B,y_B)=(0,\frac{a}{4})$. Now translate vertically
$Y_1,Y_2$ so that their third coordinate functions are respectively given by
\[
x_3(z)=x+\l \log |z-\mu |,\quad y_3(z)=x+\l \log |z-\mu _2|.
\]
Thus, we can apply item~\ref{it6-2} to $Y_1,Y_2$ and the corresponding functions $v_1,v_2$
in~(\ref{ss}), (\ref{sss}) are the same for both $Y_1,Y_2$. By transitivity
we have that given any $n\in \N$, there is a large
$R_n>0$ such that each of the two multivalued graphs of $Y_1 - C(R_n)$ is
$\frac{2}{n}$-close to the corresponding multivalued graph of $Y_2 -
C(R_n)$ in $\widetilde{D}(\infty ,R_n)$.
Also, the proof of item~\ref{it5} can be adapted to show that both $Y_1-(0,0,2\pi n+\l \log n)$,
$Y_2-(0,0,2\pi n+\l \log n)$ both converge to the same vertical helicoid. Hence, $Y_1$ and
$Y_2$ are asymptotic in $\rth$, and item~\ref{it8} is proved.

Item~\ref{it9} follows from the asymptotic description of an example $E$
with flux vector $(a,0,-b)$ given in \ref{it6} in terms of the flux components
$a$, $b=2\pi \l $.
The symmetry property in item~\ref{it10} follows directly from Assertion~\ref{ass6.6}.
Finally, item~\ref{it1} on the embeddedness of some subend of $E$ follows immediately
from the asymptotic embeddedness information described in items \ref{it7} and~\ref{it6},
together with the next straightforward observation: Embeddedness of $E$ outside of a vertical
cylinder follows from the sentence just after equation~(\ref{eq:reit}) together with the
positivity of the separation function given in part~II of item~\ref{it6-3}, while
embeddedness inside a vertical cylinder and outside of a ball follows from~item~\ref{it7}.
\end{proof}

\section{Appendix.}
\label{appendix}
{\it Proof of Assertion~\ref{ass2}.}
We will denote by $z_0$ the (common) base point for the integration in the functions $A_i(z)$
(recall that Assertion~\ref{ass2} is only used in the proof of item~\ref{it6} of Theorem~\ref{main},
whose first sentence assumes that $E$ has been translated as required in (\ref{norm})).
We decompose the integration path in $A_i(z)$ into three consecutive embedded arcs joined
by their common end points: an arc $\G $ of the circle of radius $\{ |z|=|z_0|\} $ from
$z_0$ to a point $R_0$ in the positive or negative $x$-axis (we take $R_0$ with the same sign as
$\mbox{Re}(z)$), and two segments parallel to the $x$- and $y$-axes. This decomposition procedure works
if $|\mbox{Re}(z)|\geq |z_0|$; if on the contrary $|\mbox{Re}(z)|<|z_0|$, then the decomposition can be
changed in a straightforward manner taking $R_0$ in the $y$-axis, which does not affect essentially
the arguments that follow.

Thus,
\begin{eqnarray}
A_1(z)=\int _{\G }e^{-i\xi}F_1(\xi )\, d\xi +\int _{R_0}^xe^{-iu}F_1(u)\, du +ie^{-ix}\int _0^ye^{v}F_1(x+iv)\, dv, \label{ass2a1}
\\
A_2(z)=\int _{\G }e^{i\xi}F_2(\xi )\, d\xi +\int _{R_0}^xe^{iu}F_2(u)\, du +ie^{ix}\int _0^ye^{-v}F_2(x+iv)\, dv. \label{ass2a2}
\end{eqnarray}
We define $B_1(z)$ (resp. $B_2(z)$) as the sum of the two first integrals in (\ref{ass2a1})
(resp. in (\ref{ass2a2})). It remains to show that both $B_1,B_2$ satisfy property
$\diamondsuit $, and that the modulus of the third integral in (\ref{ass2a1}) (resp. in (\ref{ass2a2}))
satisfies the first inequality in (\ref{ass2a}) (resp. the second inequality).

As $F_1,F_2$ are holomorphic and vanish at $\infty $, then $F_i(z)=\frac{C_i'}{z}+\frac{G_i(z)}{z^2}$ in
$D(\infty ,R')$, where $C_i'\in \C $ and $G_i$ is holomorphic in $D(\infty ,R')\cup \{ \infty \} $.
In particular, the first integral of each of the right-hand sides
of (\ref{ass2a1}) and (\ref{ass2a2}) define a complex constant (independent of $z$ such that $|\mbox{Re}(z)|\geq |z_0|$)
that can be taken arbitrarily small if $R'$ is sufficiently large. As for
the second integral of (\ref{ass2a1}), (\ref{ass2a2}),
\begin{eqnarray}
\int _{R_0}^xe^{-iu}F_1(u)\, du =C_1'\int _{R_0}^x\frac{e^{-iu}}{u}\, du +\int _{R_0}^x\frac{e^{-iu}}{u^2}G_1(u)\, du, \label{ass2b1}
\\
\int _{R_0}^xe^{iu}F_2(u)\, du =C_2'\int _{R_0}^x\frac{e^{iu}}{u}\, du +\int _{R_0}^x\frac{e^{iu}}{u^2}G_2(u)\, du. \label{ass2b2}
\end{eqnarray}
To study the first integral in the right-hand side of (\ref{ass2b1}), (\ref{ass2b2}), we decompose the real and imaginary parts
of each of these integrals
as a sum of integrals along consecutive intervals of length $\pi $
where the integrand has constant sign, and apply the
alternating series test to conclude that the complex valued functions
\[
\int _{R_0}^x\frac{e^{-iu}}{u}\, du ,\quad \int _{R_0}^x\frac{e^{iu}}{u}\, du
\]
satisfy property $\diamondsuit$. Regarding the second integrals in the
right-hand side of (\ref{ass2b1}), (\ref{ass2b2}), the fact that $G_i$ is bounded and that
$\left| \int _{R_0}^x\frac{du}{u^2}\right| =\left| \frac{1}{R_0}-\frac{1}{x}\right| $ imply that
the second integrals in the right-hand side of (\ref{ass2b1}), (\ref{ass2b2})
also satisfy property $\diamondsuit$. Thus we conclude that the functions $B_1(z),B_2(z)$
satisfy property $\diamondsuit$.

We next study the modulus of the third integrals on the right-hand sides of (\ref{ass2a1}) and (\ref{ass2a2}).
Using again that $F_i$ vanishes at $\infty $ for $i=1,2$, we find $C_i>0$
depending only on $F_i$ such that $|F_i(z)|\leq C_i|z|^{-1}$ whenever $|z|\geq R'$. Hence,
\[
\begin{array}{r}
{\displaystyle \left| \int _0^ye^{v}F_1(x+iv)\, dv\right| \leq \frac{C_1}{R'}\int _0^{|y|}e^v\, dv
=\frac{C_1}{R'}(e^{|y|}-1), }
\\
{\displaystyle
\left| \int _0^ye^{-v}F_2(x+iv)\, dv\right| \leq \frac{C_2}{R'}(1-e^{-|y|}),
}
\end{array}
\]
which finishes the proof of Assertion~\ref{ass2}.
{\hfill\penalty10000\raisebox{-.09em}{$\Box$}\par\medskip}

\center{William H. Meeks, III at profmeeks@gmail.com\\
Mathematics Department, University of Massachusetts, Amherst, MA 01003}
\center{Joaqu\'\i n P\'{e}rez at jperez@ugr.es\\
Department of Geometry and Topology and Institute of Mathematics (IEMath-GR),
University of Granada, 18071 Granada, Spain}

\bibliographystyle{plain}
\bibliography{bill}

\end{document}